\setlist[enumerate]{label={\upshape(\roman*)}}
\definecolor{bred}{rgb}{0.8,0,0}
\newtheorem{theorem}{Theorem}[section]
\newtheorem{proposition}[theorem]{Proposition}
\newtheorem{lemma}[theorem]{Lemma}
\newtheorem{corollary}[theorem]{Corollary}
\newtheorem{remark}[theorem]{Remark}
\newtheorem{assumption}{Assumption}
\def\rmd{\mathrm{d}}
\newcommand{\E}{\mathbb{E}}
\newcommand{\N}{\mathbb{N}}
\newcommand{\R}{\mathbb{R}}
\newcommand{\1}{\mathbbm{1}}
\newcommand{\lfrf}[1]{\left\lfloor #1 \right\rfloor}
\newcommand{\lcrc}[1]{\left\lceil #1 \right\rceil}
\DeclareMathOperator{\sech}{sech}
\newcommand{\cF}{\mathcal{F}}
\newcommand{\cS}{\mathcal{S}}
\newcommand{\0}{\mathbf{0}}
\newcommand{\cU}{\mathcal{U}}
\newcommand{\cG}{\mathcal{G}}
\newcommand{\cNN}{\mathcal{NN}}
\newcommand{\bz}{\mathbf z}
\begin{document}

\title[]{Langevin dynamics based algorithm e-TH$\varepsilon$O POULA for stochastic optimization problems with discontinuous stochastic gradient}

\author[D.-Y. Lim]{Dong-Young Lim}
\author[A. Neufeld]{Ariel Neufeld}
\author[S. Sabanis]{Sotirios Sabanis}
\author[Y. Zhang]{Ying Zhang}

\address{Department of Industrial Engineering, UNIST, 
Ulsan, South Korea}
\email{dlim@unist.ac.kr}

\address{Division of Mathematical Sciences, Nanyang Technological University, 
Singapore}
\email{ariel.neufeld@ntu.edu.sg}

\address{School of Mathematics, The University of Edinburgh, 
Edinburgh, 
UK
\& The Alan Turing Institute, 
London, 
UK \& National Technical University of Athens, Athens, 
Greece}
\email{s.sabanis@ed.ac.uk}

\address{Financial Technology Thrust, Society Hub, The Hong Kong University of Science and Technology (Guangzhou), 
Guangzhou, China}
\email{yingzhang@hkust-gz.edu.cn}

\date{}
\thanks{
Financial supports by The Alan Turing Institute, London under the EPSRC grant EP/N510129/1, the MOE AcRF Tier~2 Grant \textit{MOE-T2EP20222-0013}, 
the European Union’s Horizon 2020 research and innovation programme under the Marie Skłodowska-Curie grant agreement No.\ 801215, the University of Edinburgh Data-Driven Innovation programme, part of the Edinburgh and South East Scotland City Region Deal, Institute of Information \& communications Technology Planning \& Evaluation (IITP) grant funded by the Korea government (MSIT) (No.\ 2020-0-01336, Artificial Intelligence Graduate School Program (UNIST)),  National Research Foundation of Korea (NRF) grant funded by the Korea government (MSIT) (No.\ RS-2023-00253002), and the Guangzhou-HKUST(GZ) Joint Funding Program (No. 2024A03J0630) are gratefully acknowledged.}
\keywords{Langevin dynamics based algorithm, discontinuous stochastic gradient, non-convex stochastic optimization, non-asymptotic convergence bound, artificial neural networks, ReLU activation function, taming technique, super-linearly growing coefficients.}

\begin{abstract}
We introduce a new Langevin dynamics based algorithm, called e-TH$\varepsilon$O POULA, to solve optimization problems with discontinuous stochastic gradients which naturally appear in real-world applications such as quantile estimation, vector quantization, CVaR minimization, and regularized optimization problems involving ReLU neural networks. We demonstrate both theoretically and numerically the applicability of the e-TH$\varepsilon$O POULA algorithm. More precisely, under the conditions that the stochastic gradient is locally Lipschitz \textit{in average} and satisfies a certain convexity at infinity condition, we establish non-asymptotic error bounds for e-TH$\varepsilon$O POULA in Wasserstein distances and provide a non-asymptotic estimate for the expected excess risk, which can be controlled to be arbitrarily small. Three key applications in finance and insurance are provided, namely, multi-period portfolio optimization, transfer learning in multi-period portfolio optimization, and insurance claim prediction, which involve neural networks with (Leaky)-ReLU activation functions. Numerical experiments conducted using real-world datasets illustrate the superior empirical performance of e-TH$\varepsilon$O POULA compared to SGLD, TUSLA, ADAM, and AMSGrad in terms of model accuracy. 
\end{abstract}
\maketitle

\section{Introduction}\label{sec:intro}
A wide range of problems in economics, finance, and quantitative risk management can be represented as stochastic optimization problems. Traditional approaches to solve such problems typically face the curse of dimensionality in practical settings, which motivates researchers and practitioners to apply machine learning approaches to obtain approximated solutions. Consequently, deep learning have been widely adopted to almost all aspects in, e.g., financial applications including option pricing, implied volatility, prediction, hedging, and portfolio optimization \citep{becker:19,boudabsa2022machine,buehler:19,chen:21,fernandez2020machine,han:18,imajo:21,liu:19,neufeld2021deep,neufeld2022detecting,sirig:18,20:tsang}, and applications in insurance \cite{chen2020managing,frey2022deep,groll2022churn,guelman:12,jin:21,kaushik2022machine,kshirsagar2021accurate,matthews2022machine,perla2021time,wang2021neighbouring,wuthrich2020bias,yang:18}. While the aforementioned results justify the use of deep neural networks through the universal approximation theorem, it is not a trivial problem to train a deep neural network, which is equivalent to minimizing an associated loss function, using efficient optimization algorithms. Stochastic gradient descent (SGD) and its variants are popular methods to solve such non-convex and large scale optimization problems. However, it is well known that SGD methods are only proven to converge to a stationary point in non-convex settings. Despite the lack of theoretical guarantees for the SGD methods, the literature on deep learning in finance, insurance, and their related fields heavily rely on popular optimization methods such as SGD and its variants including, e.g., ADAM \cite{kingma:15} and AMSGrad \cite{j.2018on}. In \cite{jai:21}, the author explicitly highlights the importance of research on stochastic optimization methods for problems in finance:
`The choice of optimisation engine in deep learning is vitally important in obtaining sensible results, but a topic rarely discussed (at least within the financial mathematics community)'. The aim of this paper is thus to bridge the theoretical gap and to extend the empirical understanding of training deep learning models in applications in finance and insurance. We achieve these by investigating the properties of a newly proposed algorithm, i.e., the extended Tamed Hybrid $\varepsilon$-Order POlygonal Unadjusted Langevin Algorithm (e-TH$\varepsilon$O POULA), which can be applied to optimization problems with discontinuous stochastic gradients including quantile estimation, vector quantization, CVaR minimization, and regularized optimization problems involving ReLU neural networks, see, e.g., \cite{4,fort2016,lim2021nonasymptotic,rockafellar2000optimization}.

We consider the following optimization problem:
\begin{equation}\label{eq:obju}
\text{minimize} \quad \R^d \ni \theta \mapsto u(\theta) := \E[U(\theta, X)],
\end{equation}
where $U: \R^d \times \R^m \rightarrow \R$ is a measurable function, and $X$ is a given $\R^m$-valued random variable with probability law $\mathcal{L}(X)$. To obtain approximate minimizers of \eqref{eq:obju}, one of the approaches is to apply the stochastic gradient Langevin dynamics (SGLD) algorithm introduced in \cite{wt}, which can be viewed as a variant of the Euler discretization of the Langevin SDE defined on $t \in [0, \infty)$ given by
\begin{equation} \label{sdeintro}
\mathrm{d} Z_t=-h\left(Z_t\right) \mathrm{d} t+ \sqrt{2\beta^{-1}} \mathrm{d} B_t, \quad Z_0 = \theta_0,
\end{equation}
where $\theta_0$ is an $\R^d$-valued random variable, $h:= \nabla u$, $\beta>0$ is the inverse temperature parameter, and $(B_t)_{t \geq 0}$ is a $d$-dimensional Brownian motion. The associated stochastic gradient of the SGLD algorithm is defined as a measurable function $H:\R^d \times \R^m \to \R^d$ which satisfies $h(\theta) = \E[H(\theta, X)]$ for all $\theta \in \R^d$. One notes that, under mild conditions, the Langevin SDE \eqref{sdeintro} admits a unique invariant measure $\pi_{\beta}(\rmd \theta) \wasypropto \exp(-\beta u(\theta))\rmd \theta$ with $\beta>0$. It has been shown in \cite{hwang} that $\pi_{\beta}$ concentrates around the minimizers of $u$ when $\beta$ takes sufficiently large values. Therefore, minimizing \eqref{eq:obju} is equivalent to sampling from $\pi_{\beta}$ with large $\beta$. The convergence properties of the SGLD algorithm to $\pi_{\beta}$ in suitable distances have been well studied in the literature, under the conditions that the (stochastic) gradient of $u$ is globally Lipschitz continuous and satisfies a (local) dissipativity or convexity at infinity condition, see, e.g., \cite{nonconvex,berkeley,raginsky,xu,sgldloc} and references therein. Recent research focuses on the relaxation of the global Lipschitz condition imposed on the (stochastic) gradient of $u$ so as to accommodate optimization problems involving neural networks. However, the SGLD algorithm is unstable when applying to objective functions with highly non-linear (stochastic) gradients, and the absolute moments of the approximations generated by the SGLD algorithm could diverge to infinity at a finite time point, see \cite{hutzenthaler2011}. To address this issue, \cite{lovas2023taming} proposed a tamed unadjusted stochastic Langevin algorithm (TUSLA), which is obtained by applying the taming technique, developed in, e.g., \cite{tula,hutzenthaler2012,eulerscheme,SabanisAoAP}, to the SGLD algorithm. Convergence results of TUSLA are provided in \cite{lovas2023taming} under the condition that the stochastic gradient of $u$ is polynomially Lipschitz growing. In \cite{lim2021nonasymptotic}, the applicability of TUSLA is further extended to the case where the stochastic gradient of $u$ is discontinuous, and the polynomial Lipschitz condition is replaced by a more relaxed locally Lipschitz in average condition. The latter condition is similar to \cite[Eqn. (6)]{4} and \cite[H4]{fort2016}, which well accommodates optimization problems with ReLU neural networks. One may also refer to \cite{4,durmus2018efficient,durmus2019analysis,fort2016,luu2021sampling} for convergence results of the Langevin dynamics based algorithms with discontinuous (stochastic) gradients.

Despite their established theoretical guarantees, TUSLA and other Langevin dynamics based algorithms are less popular in practice, especially when training deep learning models, compared to adaptive learning rate methods including ADAM and AMSGrad. This is due to the superior empirical performance of the latter group of algorithms in terms of the test accuracy and training speed. In \cite{lim2021polygonal}, a new class of Langevin dynamics based algorithms, namely TH$\varepsilon$O POULA, is proposed based on the advances of polygonal Euler approximations, see \cite{krylov1985extremal,krylov1991simple}. More precisely, the design of TH$\varepsilon$O POULA relies on a combination of a componentwise taming function and a componentwise boosting function, which simultaneously address the exploding and vanishing gradient problems. Furthermore, such a design allows TH$\varepsilon$O POULA to convert from an adaptive learning rate method to a Langevin dynamics based algorithm when approaching an optimal point, preserving the feature of a fast training speed of the former and the feature of a good generalization of the latter. In addition, \cite{lim2021polygonal} provides a convergence analysis of TH$\varepsilon$O POULA for non-convex regularized optimization problems. Under the condition that the (stochastic) gradient is locally Lipschitz continuous, non-asymptotic error bounds for TH$\varepsilon$O POULA in Wasserstein distances are established, and a non-asymptotic estimate for the expected excess risk is provided. However, the local Lipschitz condition fails to accommodate optimization problems with discontinuous stochastic gradients.

In this paper, we propose the algorithm e-TH$\varepsilon$O POULA, which combines the advantages of utilizing Euler’s polygonal approximations of TH$\varepsilon$O POULA \cite{lim2021polygonal} resulting in its superior empirical performance, together with a relaxed condition on its stochastic gradient as explained below. We aim to demonstrate both theoretically and numerically the applicability of e-TH$\varepsilon$O POULA for optimization problems with discontinuous stochastic gradients. 
From a theoretical point of view, our goal is to provide theoretical guarantees for e-TH$\varepsilon$O POULA to find approximate minimizers of $u$ with discontinuous stochastic gradient. More concretely, we aim to relax the local Lipschitz condition, and replace it with a local Lipschitz \textit{in average} condition, see Assumption \ref{asm:AG}. 
In addition, \cite{lim2021polygonal} considers regularized optimization problems which assume a certain structure of the stochastic gradients of the corresponding objective functions. More precisely, \cite{lim2021polygonal} assumes that $u(\theta) := g(\theta)+\eta|\theta|^{2r+1}/(2r+1)$, $\theta \in \R^d$, where $g: \R^d \to \R$, $\eta>0$, and $r>0$. The second term on the RHS of $u$ is the regularization term, and the stochastic gradient of $u$, denoted by $H:\R^d \times \R^m \to \R^d$, is given by $H(\theta,x) = G(\theta,x) +\eta \theta |\theta|^{2r}$ where $\nabla_{\theta}g(\theta) = \E[G(\theta, X)]$. We aim to generalize the structure of $H$ by replacing $\eta \theta |\theta|^{2r}$ with any arbitrary function $F:\R^d\times \R^m \to \R^d$ which satisfies a local Lipschitz condition and a convexity at infinity condition, see \eqref{eq:expressiontGF} and Assumptions~\ref{asm:AF} and~\ref{asm:AC}. In our setting, the gradient of the regularization term is a particular feasible example for the choice of $F$. In addition to the aforementioned assumptions, by further imposing conditions on the initial value of e-TH$\varepsilon$O POULA and on the second argument of $H$, see Assumption \ref{asm:AI}, we establish non-asymptotic error bounds of e-TH$\varepsilon$O POULA in Wasserstein distances and a non-asymptotic upper estimate of the expected excess risk given by  $\E[u(\hat{\theta})] - \inf_{\theta \in \R^d} u(\theta)$ with $\hat{\theta}$ denoting an estimator generated by e-TH$\varepsilon$O POULA, which can be controlled to be arbitrarily small. From a numerical point of view, we illustrate the powerful empirical performance of e-TH$\varepsilon$O POULA by providing key examples in finance and insurance using real-world datasets, i.e., the multi-period portfolio optimization, transfer learning in the multi-period portfolio optimization, and the insurance claim prediction via neural network-based non-linear regression. Numerical experiments show that e-TH$\varepsilon$O POULA outperforms SGLD, TUSLA, ADAM, and AMSGrad in most cases\footnote{ while it performs as good as the best alternative method in the remaining cases.} with regard to test accuracy.

We conclude this section by introducing some notation. 
For $a,b \in \R$, denote by $a \wedge b = \min\{a,b\}$ and $a \vee b = \max\{a,b\}$. Let $(\Omega,\mathcal{F},P)$ be a probability space. We denote by $\E[Z]$  the expectation of a random variable $Z$. For $1\leq p<\infty$, $L^p$ is used to denote the usual space of $p$-integrable real-valued random variables. Fix integers $d, m \geq 1$. For an $\R^d$-valued random variable $Z$, its law on $\mathcal{B}(\R^d)$, i.e.\ the Borel sigma-algebra of $\R^d$, is denoted by $\mathcal{L}(Z)$. For a positive real number $a$, we denote by $\lfrf{a}$ its integer part, and $\lcrc{a} := \lfrf{a}+1 $. The Euclidean scalar product is denoted
by $\langle \cdot,\cdot\rangle$, with $|\cdot|$ standing for the corresponding norm (where the dimension of the space may vary depending on the context). For any integer $q \geq 1$, let $\mathcal{P}(\R^q)$ denote the set of probability measures on $\mathcal{B}(\R^q)$. For $\mu\in\mathcal{P}(\R^d)$ and for a $\mu$-integrable function $f:\R^d\to\R$, the notation $\mu(f):=\int_{\R^d} f(\theta)\mu(\rmd \theta)$ is used. For $\mu,\nu\in\mathcal{P}(\R^d)$, let $\mathcal{C}(\mu,\nu)$ denote the set of probability measures $\zeta$ on $\mathcal{B}(\R^{2d})$ such that its respective marginals are $\mu,\nu$. For two Borel probability measures $\mu$ and $\nu$ defined on $\R^d$ with finite $p$-th moments, the Wasserstein distance of order $p \geq 1$ is defined as
\[
{W}_p(\mu,\nu):=
\left(\inf_{\zeta\in\mathcal{C}(\mu,\nu)}\int_{\R^d}\int_{\R^d}|\theta-\bar{\theta}|^p\zeta(\rmd \theta, \rmd \bar{\theta})\right)^{1/p}.
\]

\section{e-TH$\varepsilon$O POULA: Setting and definition}
\subsection{Setting}\label{sec:theopoulasting}
Let $U: \R^d \times \R^m \rightarrow \R$ be a Borel measurable function, and let $X$ be an $\R^m$-valued random variable defined on the probability space $(\Omega,\mathcal{F},P)$ with probability law $\mathcal{L}(X)$ satisfying $\E[|U(\theta, X)|]<\infty$ for all $\theta \in \R^d$. We assume that $u: \R^d \rightarrow \R$ defined by $u(\theta) := \E[U(\theta, X)]$, $\theta \in \R^d$, is a continuously differentiable function, and denote by $h:=\nabla u$ its gradient. In addition, for any $\beta>0$, we define
\begin{equation}\label{eq:pibetaexp}
\pi_{\beta}(A) := \frac{\int_A e^{-\beta u(\theta)} \, \rmd \theta}{\int_{\R^d} e^{-\beta u(\theta)} \, \rmd \theta}, \quad A \in \mathcal{B}(\R^d),
\end{equation}
where we assume $\int_{\R^d} e^{-\beta u(\theta)} \, \rmd \theta <\infty$.

Denote by $(\mathcal{G}_n)_{n\in\N_0}$ a given filtration representing the flow of past information, and denote by $\mathcal{G}_{\infty} := \sigma(\bigcup_{n \in \N_0} \mathcal{G}_n)$. Moreover, let $(X_n)_{n\in\N_0}$ be a $(\mathcal{G}_n)$-adapted process such that $(X_n)_{n\in\N_0}$ is a sequence of i.i.d. $\R^m$-valued random variables with probability law $\mathcal{L}(X)$. In addition, let $(\xi_{n})_{n\in\N_0}$ be  a sequence of independent standard $d$-dimensional Gaussian random variables.  We assume throughout the paper that the $\R^d$-valued random variable $\theta_0$ (initial condition), $\mathcal{G}_{\infty}$, and $(\xi_{n})_{n\in\N_0}$ are independent.

Let $H: \R^d \times \R^m \rightarrow \R^d$ be an unbiased estimator of $h$, i.e., $h(\theta) = \E[H(\theta, X_0)]$, for all $\theta \in \R^d$, which takes the following form: for all $\theta \in \R^d, x \in \R^m$,
\begin{equation}\label{eq:expH}
H(\theta,x) := G(\theta,x)+F(\theta,x),
\end{equation}
where $G = (G^{(1)}, \dots, G^{(d)}):  \R^d \times \R^m \rightarrow \R^d$ is Borel measurable and $F = (F^{(1)}, \dots, F^{(d)}):  \R^d \times \R^m \rightarrow \R^d$ is continuous.

\begin{remark}\label{rmk:sturctureHtoyexamples} We consider $H$ taking the form of \eqref{eq:expH} with $G$ containing discontinuities and $F$ being locally Lipschitz continuous (see also Assumptions \ref{asm:AG} and \ref{asm:AF} in Section \ref{sec:main}) as it is satisfied by a wide range of real-world applications including quantile estimation, vector quantization, CVaR minimization,
and regularized optimization problems involving ReLU neural networks, see, e.g., \cite{4,fort2016,lim2021nonasymptotic,rockafellar2000optimization}. For illustrative purposes, we provide concrete examples for each of the applications mentioned above:
\begin{enumerate}
\item \label{item:examplei}For quantile estimation, we aim to identify the $\mathsf{q}$-th quantile of a given distribution $\mathcal{L}(X)$. To this end, we consider the following regularized optimization problem:
\[
\text{minimize} \quad \R \ni \theta \mapsto u(\theta):=  \mathbb{E}\left[l_{\mathsf{q}}(X-\theta)\right] +\frac{\eta}{2(r+1)}|\theta|^{2(r+1)},
\]
where $0<\mathsf{q}<1$, $\eta>0, r\geq 0$ are regularization and growth constants, respectively, and
\[l_{\mathsf{q}}(z) =	\begin{cases}
     			 \mathsf{q}z, & z \geq 0, \\
      			 (\mathsf{q}-1)z,  & z<0.
  			 \end{cases}
\]
Then, we have that $H(\theta,x) := G(\theta,x)+F(\theta,x)$ with $\theta \in \R, x  \in \R$,
\[
F(\theta,x):=\eta\theta|\theta|^{2r}, \quad G(\theta,x):= -\mathsf{q} +\mathbbm{1}_{\{x < \theta\}}.
\]
\item For vector quantization, our aim is to optimally quantize a given $\R^d$-valued random vector $X$ by an $\R^d$-valued random vector taking at most $N\in \N$ values. For the ease of notation, we consider the case $d=1$. For any $\theta=(\theta^{(1)},\dots,\theta^{(N)})\in \R^N$ we define the associated Voronoi cells as
\[
\mathcal{V}^{(i)}(\theta) :=\left\{x\in\R: |x-\theta^{(i)}| = \min_{j\in \{1,\dots,N\}}|x-\theta^{(j)}|\right\},\quad i = 1,\dots,N.
\]
Then, we quantize the values of $X$ in $\mathcal{V}^{(i)}(\theta)$ to $\theta^{(i)}$ in the following way. We consider minimizing the mean squared quantization error:
\[
\text{minimize} \quad \R^N \ni \theta \mapsto u(\theta):=  \sum_{i=1}^N\mathbb{E}\left[|X-\theta^{(i)}|^2\mathbbm{1}_{\mathcal{V}^{(i)}(\theta)}(X)\right] +\frac{\eta}{2(r+1)}|\theta|^{2(r+1)},
\]
where $\eta>0, r\geq 0$. This implies that $H(\theta,x) := G(\theta,x)+F(\theta,x)$ with $\theta \in \R^N, x \in \R$,
\[
F (\theta, x) := \eta\theta|\theta|^{2r}, \quad G(\theta,x) := (G^{(1)}(\theta,x),\dots, G^{(N)}(\theta,x)),
\]
where, for $i = 1, \dots, N$,
\[
G^{(i)}(\theta,x)= -2(x-\theta^{(i)})\mathbbm{1}_{\mathcal{V}^{(i)}(\theta)}(x).
\]
We note that, in the case where $X\sim \text{Uniform}[0,1]$ and $N=2$, Voronoi cells take the form $\mathcal{V}^{(1)}(\theta) = [0,(\theta^{(1)}+\theta^{(2)})/2]$ and $\mathcal{V}^{(2)}(\theta) = [(\theta^{(1)}+\theta^{(2)})/2,1]$.
\item \label{item:exampleiii}For CVaR minimization, we consider the problem of obtaining VaR and obtaining optimal weights which minimize CVaR of a given portfolio consisting of $N\in\N$ assets, i.e., we consider
\[
\text{minimize} \quad \R^{N+1} \ni \theta \mapsto u(\theta):=  \mathbb{E}\left[ \frac{1}{1-\mathsf{q}}\left(\sum_{i=1}^{N}g_i(w)X^{(i)}-\overline{\theta}\right)_+ + \overline{\theta} \right]+\frac{\eta}{2(r+1)}|\theta |^{2(r+1)},
\]
where $\theta:=(\overline{\theta},w)=(\overline{\theta}, w^{(1)}, \dots,w^{(N)})\in \R^{N+1}$, for each $i = 1, \dots, N$, $X^{(i)} \in \R$ denotes the loss of the $i$-th asset, $g_i:\R^N \rightarrow \R$ denotes the (parameterized) weight of the $i$-th asset with $g_i(w) := \frac{e^{w^{(i)}}}{\sum_{j=1}^{N}e^{w^{(j)}}} \in (0,\,1)$, $0<\mathsf{q}<1$, $(x)_+:=\max\{0,x\}$ for $x\in \R$, $\eta>0$, and $r\geq 0$. Then, we have that $H(\theta,x) := G(\theta,x)+F(\theta,x)$ with $\theta \in \R^{N+1}, x \in \R^N$,
\[
F (\theta, x) := \eta\theta|\theta|^{2r}, \quad G(\theta,x) := (G_{\overline{\theta}}(\theta,x), G_{w^{(1)}}(\theta,x),\dots,G_{w^{(N)}}(\theta,x)),
\]
where for $i = 1, \dots, N$,
\begin{align*}
G_{\overline{\theta}}(\theta,x)&:= 1- \frac{1}{1-\mathsf{q}}\1_{\{\sum_{i=1}^{N}g_i(w)x^{(i)}\geq \overline{\theta}\}} , \\
G_{w^{(j)}}(\theta,x)&:= \frac{1}{1-\mathsf{q}}\sum_{i=1}^N\partial_{w^{(j)}} g_i(w)x^{(i)}\1_{\{\sum_{i=1}^{N}g_i(w)x^{(i)} \geq \overline{\theta}\}}.
\end{align*}
\item \label{item:exampleiv}For the regularized optimization problems involving ReLU neural networks, we consider an example of identifying the best regularized mean-square estimator\footnote{For the ease of presentation, we consider the case where the input and target variables are both one dimensional. For the multi-dimensional version, we refer to Section \ref{sub:transfer_learning} and the corresponding Proposition \ref{prop:optim_tl_reg_vasm}.}. We consider the following regularized optimization problem:
\begin{equation*}
\text{minimize} \quad \R^2 \ni \theta \mapsto u(\theta) := \E[(Y-\mathfrak{N}(\theta, Z))^2]+\frac{\eta}{2(r+1)}|\theta|^{2(r+1)},
\end{equation*}2.
where $\mathfrak{N}: \R^2 \times \R \to \R$ is the neural network given by
\[
\mathfrak{N}(\theta, z) := \mathsf{K}_1\sigma_1(\mathsf{c}_0z+\mathsf{b}_0),
\]
with $\mathsf{K}_1$ the weight parameter, $\sigma_1(y) = \max\{0, y\}$, $y \in \R$, the ReLU activation function, $\mathsf{c}_0$ the fixed (pre-trained non-zero) input weight, $z$ the input data, $\mathsf{b}_0$ the bias parameter, and where $\theta =(\mathsf{K}_1, \mathsf{b}_0) \in \R^2$ is the parameter of the optimization problem, $Y$ is the $\R$-valued target random variable, $Z$ is the $\R$-valued input random variable, and $\eta, r>0$. Then, we have that $H(\theta,x) := G(\theta,x)+F(\theta,x)$ with $\theta \in \R^2, x = (y, z) \in \R^2$,
\[
F (\theta, x) := \eta\theta|\theta|^{2r}, \quad G(\theta,x) := (G_{\mathsf{K_1}}(\theta,x), G_{\mathsf{b}_0}(\theta,x))
\]
where
\begin{align*}
G_{\mathsf{K}_1}(\theta,x)
&= -2(y-\mathfrak{N}(\theta, z))\sigma_1(\mathsf{c}_0z+\mathsf{b}_0),\\
G_{\mathsf{b}_0}(\theta,x)
& = -2(y-\mathfrak{N}(\theta, z))\mathsf{K}_1\mathbbm{1}_{\{z\geq -\mathsf{b}_0/\mathsf{c}_0\}}.
\end{align*}
\end{enumerate}
We note that all the examples \ref{item:examplei}-\ref{item:exampleiv} satisfy Assumptions \ref{asm:AI}-\ref{asm:AC} in Section \ref{sec:assumption}, see, e.g., \cite{4,fort2016,lim2021nonasymptotic,sglddiscont} for detailed proofs, and hence can be solved using e-TH$\varepsilon$O POULA with its performance backed by theoretical results presented in Section \ref{sec:mr}. While examples \ref{item:examplei}-\ref{item:exampleiii} are presented to illustrate the wide applicability of e-TH$\varepsilon$O POULA, we focus in this paper on a general case of example \ref{item:exampleiv} in Section \ref{sub:transfer_learning} and demonstrate the superior empirical performance of e-TH$\varepsilon$O POULA in Section \ref{sec:numapp} compared to other alternatives including SGLD, TUSLA, ADAM, and AMSGrad.
\end{remark}

\subsection{Algorithm} We define the extended Tamed Hybrid $\varepsilon$-Order POlygonal Unadjusted Langevin Algorithm (e-TH$\varepsilon$O POULA) by
\begin{equation}\label{eq:theopoula}
\theta^{\lambda}_0 :=\theta_0, \quad \theta^{\lambda}_{n+1}:=\theta^{\lambda}_n-\lambda H_\lambda(\theta^{\lambda}_n,X_{n+1})+ \sqrt{2\lambda\beta^{-1}} \xi_{n+1},\quad  n\in\N_0,
\end{equation}
where $\lambda>0$ is the stepsize, $\beta>0$ is the inverse temperature parameter, and where 
$H_\lambda(\theta,x) $ is defined, for all $\theta \in \R^d, x \in \R^m$, by
\begin{equation} \label{eq:expressiontH}
H_\lambda(\theta,x):=G_\lambda(\theta,x)+F_\lambda(\theta,x),
\end{equation}
with $G_\lambda(\theta,x)= (G_\lambda^{(1)}(\theta,x), \dots, G_\lambda^{(d)}(\theta,x))$ and $F_\lambda(\theta,x) =  (F_\lambda^{(1)}(\theta,x), \dots, F_\lambda^{(d)}(\theta,x))$ given by
\begin{align} \label{eq:expressiontGF}
\begin{split}
G_\lambda^{(i)}(\theta,x)&:=\frac{G^{(i)}(\theta,x)}{1+\sqrt{\lambda}|G^{(i)}(\theta,x)|}\left(1+\frac{\sqrt{\lambda}}{\varepsilon+|G^{(i)}(\theta,x)|}\right), \quad F_\lambda^{(i)}(\theta, x): = \frac{F^{(i)}(\theta,x)}{1+\sqrt{\lambda}|\theta|^{2r}},
\end{split}
\end{align}
for any $i = 1, \dots, d$ with fixed $0<\varepsilon <1$, $r>0$.
\begin{remark}\label{rmk:theopoulacmt} Recall that the general form of the stochastic gradient Langevin dynamics (SGLD) algorithm is given by
\begin{equation}\label{eqn:sgld}
\theta^{\mathsf{SGLD}}_0 :=\theta_0, \quad \theta^{\mathsf{SGLD}}_{n+1}:=\theta^{\mathsf{SGLD}}_n-\lambda H(\theta^{\mathsf{SGLD}}_n,X_{n+1})+ \sqrt{2\lambda\beta^{-1}} \xi_{n+1},\quad  n\in\N_0.
\end{equation}
Therefore, e-TH$\varepsilon$O POULA is obtained by replacing $H$ in the SGLD algorithm with $H_\lambda$ given in \eqref{eq:expressiontH}-\eqref{eq:expressiontGF}. More precisely, one part of $H_\lambda$, i.e., $F_\lambda$, is obtained by multiplying $F$ with the taming factor $1+\sqrt{\lambda}|\theta|^{2r}$, while the other part of $H_\lambda$, i.e., $G_\lambda$, is defined by dividing $G$ componentwise with the taming factor $1+\sqrt{\lambda}|G^{(i)}(\theta,x)|$ and, importantly, with the boosting function $1+\frac{\sqrt{\lambda}}{\varepsilon+|G^{(i)}(\theta,x)|}$. One observes that, when $|G^{(i)}(\theta,x)|$ is small, the boosting function takes a large value, which, in turn, contributes to the step-size and helps prevent the vanishing gradient problem which occurs when the stochastic gradient is extremely small resulting in insignificant updates of the algorithm before reaching an optimal point, while the boosting function is close to one when $|G^{(i)}(\theta,x)|$ is large.  Moreover, the design of $H_{\lambda}$ is motivated by the regularized optimization problems. In such a setting, $F$ corresponds to the gradient of the regularization term, and $G$ corresponds to the gradient of the original (non-regularized) objective function of a given optimization problem. The boosting function, together with the componentwise design of $G_\lambda$, thus significantly improve the training efficiency of e-TH$\varepsilon$O POULA as demonstrated numerically in Section~\ref{sec:numapp}.
\end{remark}


\section{Numerical Experiments}\label{sec:numapp}

This section demonstrates the performance of e-TH$\varepsilon$O POULA by applying it to real-world applications arising in finance and insurance. In Section~\ref{sub:port_op}, we apply e-TH$\varepsilon$O POULA to approximately solve the problem of portfolio selection studied in \cite{20:tsang} using neural networks, where e-TH$\varepsilon$O POULA is used for the training of the neural networks. Data sets are generated from popular models in finance such as Black-Scholes and Autoregressive models. Then, Section~\ref{sub:transfer_learning} discusses a transfer learning setting, based on the dynamic programming principle, in the context of portfolio selection with theoretical guarantees for the convergence of our proposed algorithm. Next, in Section~\ref{sub:gamma}, we consider a neural network-based non-linear regression to predict insurance claims where French auto insurance claim data is used. Finally, in Section~\ref{sub:simulationsummary}, we provide a summary of our numerical results as well as a brief discussion on the optimal choice of optimization algorithms. Source code for all the experiments can be found at \url{https://github.com/DongyoungLim/eTHEOPOULA}.

\subsection{Multi-period portfolio optimization}\label{sub:port_op}

This subsection discusses a deep learning approach proposed in \cite{20:tsang} to solve multi-period portfolio optimization problems. The idea of the approach is to view a given portfolio optimization problem as a Markov Decision Process (MDP), and then approximate the optimal policy function of the MDP by means of neural networks. We train the corresponding neural networks using e-TH$\varepsilon$O POULA and showcase its performance also in comparison with other popular optimization algorithms for the training of neural networks. 


Fix $K>0$. Assume that the financial market is defined on a filtered probability space $(\Omega, \{\cF_k\}_{k=0}^K, P)$, where $\cF_0 = \{\emptyset, \Omega\}$, with finite time horizon $[0,K]$ where assets can be traded at discrete time points, $k=0, 1, 2, \dots, K-1$. For each time point $k$, denote by $R_k \in \R^p$ the excess return vector of $p$ risky assets between the period $[k. k+1)$, whereas the risk free return is denoted by $R_f$. Moreover, denote by $W_k\in\R$ the wealth of the portfolio at time point $k$. For positive integers $d_s$ and $p$, denote by $\cS \subseteq \R^{d_s}$ the set of possible states and $D\subseteq \R^p$ the set of possible actions representing the proportion of current wealth invested in each risky asset. Then,  for any $k=0, 1,\ldots, K-1$, the evolution of the wealth between the time points $k$ and $k+1$ is given by
\begin{equation*}
W_{k+1} = W_k ( \langle g_k(s_k), R_k\rangle +R_f),
\end{equation*}
where $g_k(\cdot) :\cS \rightarrow D$ is the investment control policy function on $p$ risky assets at time point $k$ and $s_k \in \cS$ is the state at time point $k$ which is $\cF_{k}$-measurable. Moreover, we denote by $\cU$ the set of admissible\footnote{The functions in $\cU$ may satisfy certain bounding constraints, e.g., $D=\Pi_{i=1}^p [l_i, u_i]$ for some lower bounds $l=(l_1, \ldots, l_p)$ and upper bounds $u=(u_1,\ldots,u_p)$. } control functions.

In this setting, we are interested in finding the optimal portfolio selection of $p$ risky assets which maximizes the expected utility function of the terminal wealth $W_K$. The expected utility maximization problem can be written as an MDP problem as follows:
\begin{eqnarray}
\mathcal{V}_K(s_0)&=&\max_{g_0, \ldots, g_{K-1} \in \cU} \E[\Psi(s_K) ] \label{eq:optim_mdp}\\
\mbox{s.t.} &\quad& s_{k+1} = \bar{h}(s_k, g_k(s_k), \eta_k),\quad k=0, 1, \ldots, K-1, \nonumber 
\end{eqnarray}
where $\Psi(\cdot):\cS \rightarrow \R$ is the objective function, $ \bar{h}: \cS \times D \times \R^{\overline{m}} \rightarrow \cS$ with $\overline{m}>0$ is the transition function, and $\eta_k $ is an $\R^{\overline{m}}$-valued $\cF_{k+1}$-measurable random variable. We assume that each of the $\R^{d_s}$-valued state variable $s_k$ contains (in one component) the wealth $W_k$, see, e.g., the autoregressive (AR) (1) model below. Furthermore, we set the quadratic utility function as the objective function, which is given by $\Psi(s_K):= U(W_K)=-(W_K- \frac{\gamma}{2})^2$ for some fixed $\gamma>0$. 

We solve the MDP problem \eqref{eq:optim_mdp} via the deep learning approach proposed in \cite{20:tsang}. We briefly introduce the approach to make our paper self-contained. Denote by $\cG_\nu $ the set of standard feedforward neural networks with two hidden layers, which is given explicitly by
\begin{align}\label{def:tlfn}
\begin{split}
\cG_\nu &= \{f: \R^{d_s} \rightarrow \R^p | f(x) = \tanh(K_3 z + b_3),  z=\sigma(K_2 y + b_2), \\
 &\qquad y=\sigma(K_1 x + b_1), K_1 \in \R^{\nu \times {d_s}}, K_2 \in \R^{\nu \times \nu}, K_3 \in \R^{p \times \nu}, b_1,b_2 \in \R^{\nu}, b_3 \in \R^{p} \},
\end{split}
\end{align}
where $\nu$ denotes the number of neurons on each layer of the neural network, $\tanh(x)$, for any $x\in \R^p$, is the hyperbolic tangent function at $x$ applied componentwise, and $\sigma(y) = \max\{0, y\}$, $y \in \R^{\nu}$, is the ReLU activation function at $y$ applied componentwise.

For any matrix $M \in \R^{a_M\times b_M}$ with $a_M, b_M>0$, denote by $[M]$ the vector of all elements in $M$. Moreover, for any $k = 0,1, \dots, K-1$, denote by $g_k(\cdot; \theta_k):\R^{d_s} \rightarrow \R^p$ the approximated policy function at time $k$ using a neural network with its structure defined in with \eqref{def:tlfn}, where $\theta_k = (b_1, b_2, b_3, [K_1], [K_2], [K_3]) \in \R^{\nu(d_s+\nu+p+2)+p}$ denotes the parameter of the neural network. Then\footnote{Note that $\mathcal{V}_K(s_0)$ and $V_K(s_0)$ only differ by the sign, hence, up to the approximation error, solving \eqref{eq:optim_mdp} is equivalent to solving \eqref{eq:optim_nn}.}, the MDP problem~\eqref{eq:optim_mdp} can be approximated by restricting\footnote{We refer to \cite{20:tsang} for the verification of the approximation.} $g_k(\cdot; \theta_k) \in\cG_\nu $:
\begin{align}\label{eq:optim_nn}
\begin{split}
-\mathcal{V}_K(s_0) =:V_K(s_0) \approx V_K^*(s_0) 
&= \min_{\theta}\E[-\Psi(s_K^\cNN(\zeta;\theta))],
\end{split}
\end{align}
where $\zeta := (s_0, \eta_0, \ldots, \eta_{K-1})$ denotes the vector of the initial state variable and all the random variables throughout the trading time horizon $[0,K]$, and where $s_K^\cNN(\zeta;\theta)$ is recursively defined, for $k=0, 1, \dots, K-1$, by
\begin{equation}\label{eq:propsed_nn}
 s_{k+1} =  \bar{h}(s_k, g_k(s_k;\theta_k), \eta_k), \quad g_k(\cdot;\theta_k)\in \cG_\nu
\end{equation}
with $ s_K^{\cNN}(\zeta; \theta): = s_K $, $\theta = (\theta_0, \ldots, \theta_{K-1})\in \R^{d}$ being the parameter for the neural networks, $d:=K(\nu(d_s +\nu +p +2)+ p)$, and $\cG_\nu$ given\footnote{In the implementation stage, one might need to perform suitable scalar addition and multiplication for the neural networks in $\cG_\nu$ so that they also satisfy the bounding constraints specified for functions in $\cU$.} in \eqref{def:tlfn}. 

We test the performance of e-TH$\varepsilon$O POULA in comparison with other popular stochastic optimization algorithms such as SGLD defined in \eqref{eqn:sgld}, ADAM, and AMSGrad by solving the optimization problem~\eqref{eq:optim_nn} under two different asset return models: the (discrete-time version of) Black-Scholes model and the AR($1$) model. 
Moreover, we provide extensive numerical experiments with different market parameters and different sizes of neurons to demonstrate the efficiency of our algorithm.

\paragraph{\textbf{Black-Scholes model.}}
For any matrix $M \in \R^{p\times p}$, denote by $\mbox{diag}(M):=(M_{11}, \ldots, M_{pp})$ the vector of the diagonal elements of $M$, and denote by $M^\top$ its transpose. Denote by $I_p$ the $p\times p$ identity matrix. For any $k = 0, \dots, K-1$, we consider the following (discrete-time) Black-Scholes model analyzed in \cite{20:tsang} for the excess return $R_k$:
\begin{equation}\label{eqn:bser}
R_k = \exp\left(\left(\widetilde{r} \1 + \Sigma \widetilde{\lambda} -\frac{1}{2}\mbox{diag}(\Sigma\Sigma^\top)\right)\Delta + \sqrt{\Delta} \Sigma \epsilon_k\right) - R_f \1,
\end{equation}
where $\widetilde{r} \in \R$, $\1 = (1,\ldots ,1) \in \R^p$, $\Sigma \in \R^{p\times p}$, $\widetilde{\lambda} \in \R^p$, $\Delta>0$ is a constant rebalancing time period, $\epsilon_k$, $k=0, \dots, K-1$, are i.i.d.\ $p$-dimensional Gaussian vectors with mean $\0$ and covariance $ I_p$, i.e., $\epsilon_k \sim N_p(\0, I_p)$, and $R_f := \exp(\widetilde{r}\Delta)$ denotes the risk free return. In this setting, the excess returns $\{R_k\}_{k=0}^{K-1}$ are i.i.d.. Then, the equivalent optimization problem to the MDP problem \eqref{eq:optim_mdp} in the Black-Scholes model can be written as follows:
\begin{eqnarray}
V_K(s_0)&=&\min_{g_0, \ldots, g_{K-1} \in \cU} \E[-U(W_K)  ]   = \min_{g_0, \ldots, g_{K-1} \in \cU} \E[(W_K- \gamma/2)^2 ] \label{eq:optim_bs} \\
\mbox{s.t.} &\quad& W_{k+1} = W_k ( \langle g_k(s_k), R_k\rangle +R_f),\quad k=0, 1, \ldots, K-1. \nonumber
\end{eqnarray}
where $s_k := W_k$, $ {d_s} := 1$, $\eta_k:=R_k$, $\overline{m} := p$, and $\bar{h}(s_k, g_k(s_k), \eta_k) := W_k ( \langle g_k(s_k), R_k \rangle +R_f)$.

We approximate the optimization problem \eqref{eq:optim_bs} using the deep learning approach \eqref{eq:optim_nn}  where we train $K$ neural networks involved in \eqref{eq:propsed_nn} with each of the neural networks defined explicitly in \eqref{def:tlfn}. In other words, we have
\begin{equation}\label{eqn:bsfl}
V_K(s_0) \approx V_K^*(s_0)= \min_{\theta}\E\left[\left(s_K^\cNN(\zeta;\theta)-\gamma/2\right)^2 \right],
\end{equation}
where $\zeta := (W_0, R_0, \ldots, R_{K-1})$ and where $s_K^\cNN(\zeta;\theta)$ is defined recursively as in \eqref{eq:propsed_nn} with $s_K^{\cNN}(\zeta; \theta): = s_K =W_K$, $s_k := W_k$, $k=0, 1, \ldots, K-1$, and $\bar{h}(s_k, g_k(s_k;\theta_k), \eta_k) := W_k ( \langle g_k(s_k;\theta_k), R_k \rangle +R_f)$. Three different simulation settings of the Black-Scholes model are summarized in Table~\ref{tab:iid}.  Similar to \cite{20:tsang}, we run our models for $200$ steps\footnote{Following \cite{20:tsang}, we use here the term ``step'' to indicate ``epoch''.} with batch size of $128$. For each step, $\numprint{20000}$ training samples are generated and $157$ iterations ($=\lceil \numprint{20000}/128 \rceil$) are performed to train the models. Then, the test score is computed using $\numprint{50000}$ test samples. In addition, three different numbers of $\nu$ are tested for each experimental setting: for $p=5$: $\nu = \{1,5, 10\}$; for $p=50$: $\nu = \{1, 5, 20\}$; and for $p=100$: $\nu = \{1, 5, 20\}$.

\begin{table}[t]
\centering
\begin{tabular}{c | c  c  c }
 \hline
$p$ & $5$ & $50$ & $100$ \\ \hline \hline
$\widetilde{r}$ &$ 0.03$ & $0.03$ &$ 0.03 $\\ \hline
$\Delta$ & $1/40 $& $1/40 $& $1/30$\\ \hline
K &$ 40 $ & $ 40 $ & $30$ \\  \hline
$W_0$ & $1$ &$1$ &$1$ \\ \hline
$\gamma$ & $4$& $5$&$6$ \\  \hline
$D$ & $[0, 1.5]^p$ & $[0, 1.5]^p$ & $[0, 0.5]^p$ \\ \hline
\multirow{2}{*}{$\widetilde{\lambda}$} & $\widetilde{\lambda}_i=0.1$ for $i=1,2$ & $\widetilde{\lambda}_i=0.01$ for $i=1,\ldots, 25$ & $\widetilde{\lambda}_i=0.01$ for $i=1,\ldots, 50$ \\
 & $\widetilde{\lambda}_i=0.2$ for $i=3,4,5$ & $\widetilde{\lambda}_i=0.05$ for $i=26,\ldots, 50$ & $\widetilde{\lambda}_i=0.05$ for $i=51,\ldots, 100$ \\  \hline
\multirow{2}{*}{$\Sigma$} & $\Sigma_{ii}=0.15$ & $\Sigma_{ii}=0.15$  & $\Sigma_{ii}=0.15$  \\
 & $\Sigma_{ij}=0.01$  for $i\neq j$ & $\Sigma_{ij}=0.005$  for $i\neq j$& $\Sigma_{ij}=0.0025$  for $i\neq j$\\  \hline
\end{tabular}
\caption{Parameters for optimization problem \eqref{eq:optim_bs}.}
\label{tab:iid}
\end{table}

\begin{table}[t]
\centering
\begin{tabular}{c | c  c  c | c  c  c}
 \hline
 & \multicolumn{3}{c}{test score}  & \multicolumn{3}{|c}{training speed} \\ \hline
 & \multicolumn{3}{c}{$p=5$} & \multicolumn{3}{|c}{$p=5$}  \\ \hline
$\nu$ & $1$ & $5$ & $10$ & $1$ & $5$ & $10$ \\ \hline
SGLD & $0.845$ & $0.836$ & $0.835$& NA $(\numprint{1284})$ & NA $(\numprint{1284})$ & NA ($\numprint{1297}$)\\
TUSLA & $0.852$ & $0.84$ & $0.839$ & NA ($\numprint{1445}$)& NA ($\numprint{1465}$)& NA ($\numprint{1470}$) \\
ADAM & $0.832$ & $0.825$ & $0.822$ & 14 ($\numprint{1375}$)  & 69 ($\numprint{1379}$) & 69 ($\numprint{1385}$) \\
AMSGrad & $0.833$ & $0.825$ & $0.822$ & 14 ($\numprint{1418}$) & 71 ($\numprint{1417}$) & 71 ($\numprint{1420}$) \\
{\color{blue} e-TH$\varepsilon$O POULA}& ${\color{blue}0.832}$ & ${\color{blue}0.824}$ & ${\color{blue}0.822}$ &  {\color{blue} 32 ($\numprint{1606}$)}  & {\color{blue}104 ($\numprint{1598}$)} & {\color{blue}88 ($\numprint{1603}$)}\\
HJB solution (benchmark) & $0.821$ & $0.821$ & $0.821$ & - & - & -\\  \hline
 & \multicolumn{3}{c}{$p=50$} & \multicolumn{3}{|c}{$p=50$}\\  \hline
$\nu$ & $1$ & $5$ & $20$ & $1$ & $5$ & $20$ \\ \hline
SGLD & $2.176$ & $2.079$ & $2.056$ & NA ($\numprint{1582}$)  & NA ($\numprint{1587}$)& NA ($\numprint{1590}$) \\
TUSLA & $2.401$ & $2.207$ & $2.097$ & NA ($\numprint{1776}$)  & NA ($\numprint{1779}$)  & NA ($\numprint{1772}$) \\
ADAM & $2.048$ & $2.039$ & $2.038$ & 67 ($\numprint{1666}$) & 135 ($\numprint{1682}$) & 142 ($\numprint{1674}$) \\
AMSGrad & $2.049$ & $2.040$ & $2.039$ & 68 ($\numprint{1709}$)  & 129 ($\numprint{1718}$) & 173 ($\numprint{1727}$)  \\
{\color{blue} e-TH$\varepsilon$O POULA} & ${\color{blue}2.049}$ &$ {\color{blue}2.042}$ & ${\color{blue}2.041}$ & {\color{blue}  76 ($\numprint{1903}$)} & {\color{blue} 96 ($\numprint{1915}$) } & {\color{blue} 154 ($\numprint{1930}$) } \\
HJB solution (benchmark) & $2.032$ & $2.032$ & $2.032$ & - & - & - \\  \hline
 & \multicolumn{3}{c}{$p=100$} & \multicolumn{3}{|c}{$p=100$}\\  \hline
$\nu$ & $1$ &$ 5$ & $20$ & $1$ & $5$ & $20$ \\ \hline
SGLD & $3.690$ & $3.581$ & $3.527$ & NA ($\numprint{1334}$) & NA ($\numprint{1339}$) & NA ($\numprint{1338}$) \\
TUSLA & $4.693$ & $3.852$ & $3.636$ & NA ($\numprint{1523}$) & NA ($\numprint{1544}$) & NA ($\numprint{1535}$) \\
ADAM & $3.541$ & $3.491$ & $3.487$ & 147 ($\numprint{1403}$)  & 170 ($\numprint{1416}$)  & 204 ($\numprint{1409}$) \\
AMSGrad & $3.556$ & $3.496$ &$ 3.489$ & 217 ($\numprint{1444}$)  & 197 ($\numprint{1459}$) & 167 ($\numprint{1451}$) \\
{\color{blue} e-TH$\varepsilon$O POULA}& ${\color{blue}3.539}$ &${\color{blue}3.500 }$ &$ {\color{blue}3.496}$ & {\color{blue} 95 ($\numprint{1585}$) } & {\color{blue} 153 ($\numprint{1611}$) } & {\color{blue} 183 ($\numprint{1593}$) } \\
HJB solution (benchmark) & $3.460$ &$ 3.460$ & $3.460$ & - & - & - \\
\hline
\end{tabular}
\caption{Test score $V_K^*(s_0)$ and two metrics for training speed under the Black-Scholes model. In the `training speed' column, we report the first time (measured in seconds) when each optimizer reaches a score within a $1\%$ difference from the lowest best score over all the optimizers for each experiment. `NA' means that the optimizer does not achieve a difference of less than $1\%$ from the lowest best score even after $200$ epochs. In addition, the number in each parenthesis indicates the time (measured in seconds) required to train the model for $200$ epochs.
}
\label{tab:bs}
\end{table}


For e-TH$\varepsilon$O POULA, we find the best hyperparameters among the following choices: $\lambda = \{0.1, 0.05,\\ 0.01\}$, $\epsilon = \{10^{-2}, 10^{-4}, 10^{-8}, 10^{-12}\}$, and $\beta=10^{12}$. For SGLD and TUSLA, we use the following hyperparameters: $\lambda=\{0.5, 0.1, 0.05, 0.01\}$ and $\beta=10^{12}$. For ADAM and AMSGrad, the best learning rate is chosen among $\lambda=\{0.1, 0.01, 0.001\}$ with other hyperparameters $\epsilon =10^{-8}$, $\beta_1 = 0.9$, and $\beta_2 = 0.999$ being fixed. The learning rate is decayed by $10$ after $50$ steps for all the optimization algorithms. 

In \cite{20:tsang}, the authors have approximately solved the optimization problem \eqref{eqn:bsfl} using ADAM. Following \cite{20:tsang}, we have also included in Table~\ref{tab:bs} the values of the solution of the Hamilton-Jacobi-Bellman (HJB) equation which were calculated in \cite{20:tsang}. As highlighted in \cite{20:tsang}, the solution of the HJB equation can be interpreted as the continuous-time analog of our discrete-time optimization problem and provides values which are lower than the ones of the discrete-time optimization problem. However, since there is no benchmark algorithm for the discrete-time optimization problem we are considering, we follow \cite{20:tsang} and still include the values obtained from the HJB solution.

Figure~\ref{fig:iid} plots learning curves of all the optimization algorithms for different configurations of $(p, \nu)$. Table~\ref{tab:bs} shows the best test score $V_K^*(s_0)$, defined in \eqref{eq:optim_nn}, of each optimization algorithm where $V_K^*(s_0) \approx V_K(s_0)$ with $ V_K(s_0)$ defined in \eqref{eq:optim_bs}. As shown in Figure~\ref{fig:iid} and Table~\ref{tab:bs}, SGLD performs worst across all the experiments. On the other hand, e-TH$\varepsilon$O POULA achieves similar test scores as ADAM and AMSGrad.

We also compare the training speed of the optimization algorithms using two different metrics. First, we report the first time (measured in seconds) when each optimizer reaches a score within a 1\% difference from the lowest best score over \textit{all} optimizers, i.e., SGLD, TUSLA, ADAM, AMSGrad, and e-TH$\varepsilon$O POULA. For example, in the case of $p=100$, $\nu=1$, the lowest best score is $3.539$ attained by e-TH$\varepsilon$O POULA. SGLD and TUSLA do not get close to within 1\% of the lowest best score for the 200 epochs. ADAM and AMSGrad achieves values within 1\% of the lowest best score after $147$ and $217$ seconds have elapsed, respectively. Second, we report the time (measured in seconds) it takes for each optimizer to train the neural network for $200$ epochs. These two metrics for training speed are summarized in Table~\ref{tab:bs}. Although training the model for $200$ epochs with e-TH$\varepsilon$O POULA takes approximately 15\% longer compared to ADAM, it reaches the best score faster or as fast as the other optimizers.

\begin{figure}[t]
    \centering
    \begin{subfigure}[b]{0.32\textwidth}
        \includegraphics[width=\textwidth]{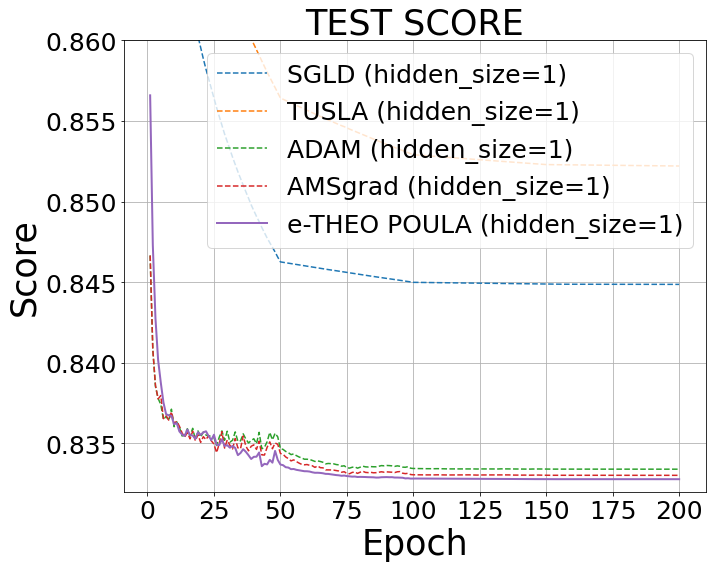}
        \caption{$p=5$ and $\nu=1$}
    \end{subfigure}
    \begin{subfigure}[b]{0.32\textwidth}
        \includegraphics[width=\textwidth]{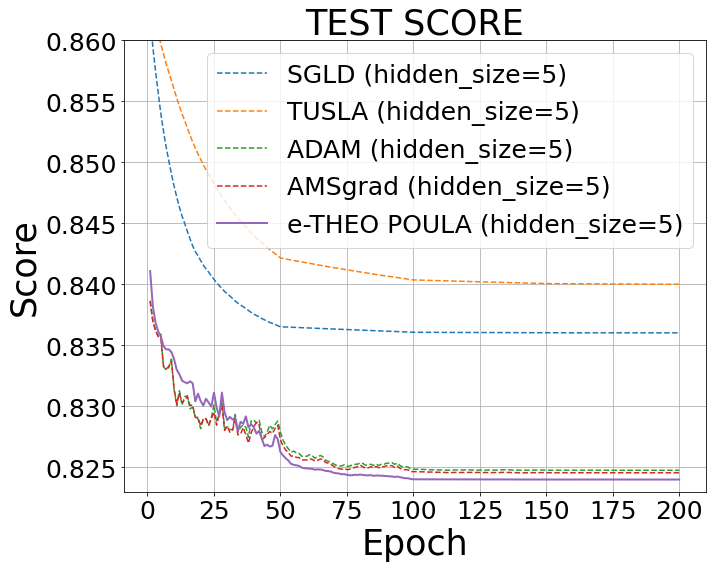}
        \caption{$p=5$ and $\nu=5$}
    \end{subfigure}
    \begin{subfigure}[b]{0.32\textwidth}
        \includegraphics[width=\textwidth]{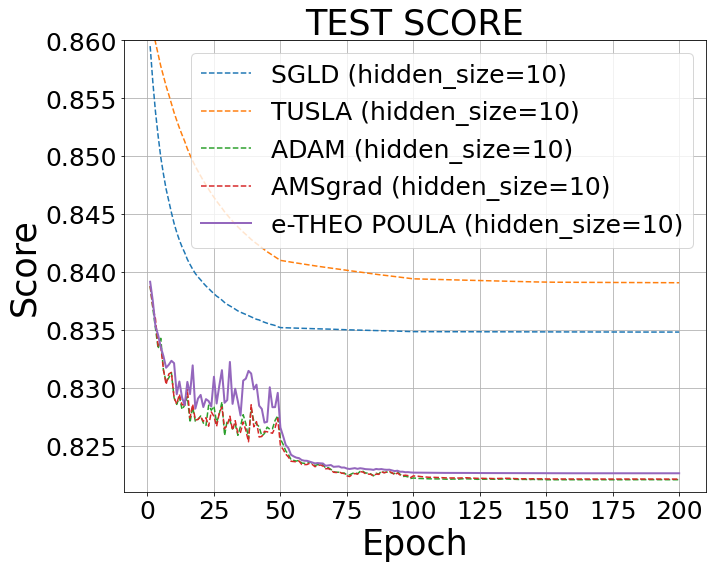}
        \caption{$p=5$ and $\nu=10$}
    \end{subfigure}

    \begin{subfigure}[b]{0.32\textwidth}
        \includegraphics[width=\textwidth]{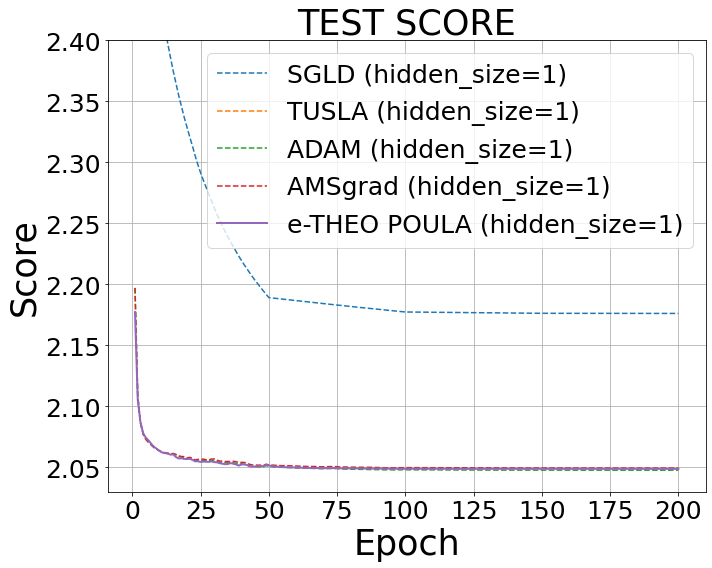}
        \caption{$p=50$ and $\nu=1$}
    \end{subfigure}
    \begin{subfigure}[b]{0.32\textwidth}
        \includegraphics[width=\textwidth]{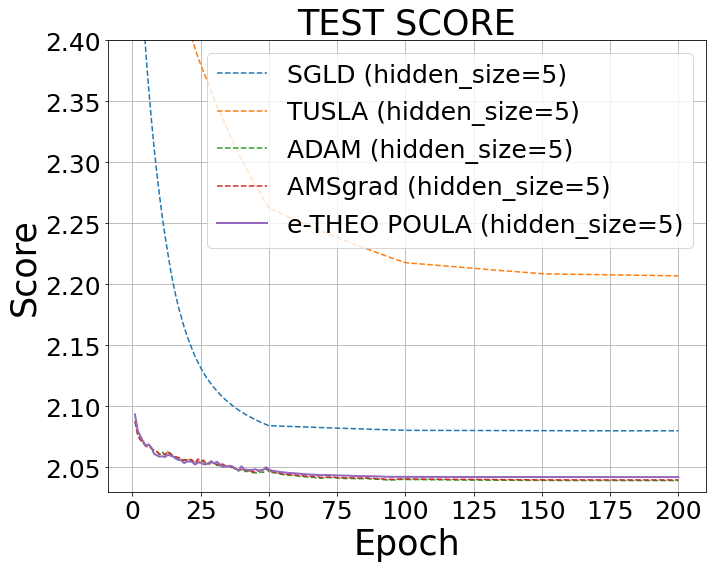}
        \caption{$p=50$ and $\nu=5$}
    \end{subfigure}
    \begin{subfigure}[b]{0.32\textwidth}
        \includegraphics[width=\textwidth]{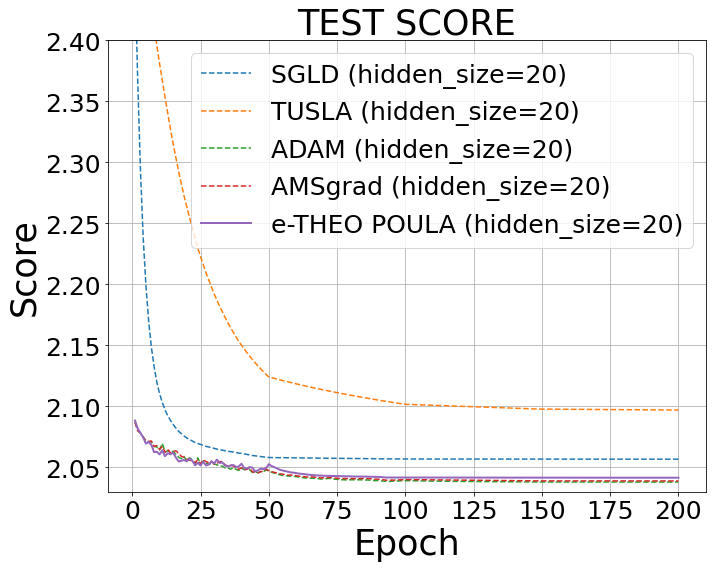}
        \caption{$p=50$ and $\nu=20$}
    \end{subfigure}

    \begin{subfigure}[b]{0.32\textwidth}
        \includegraphics[width=\textwidth]{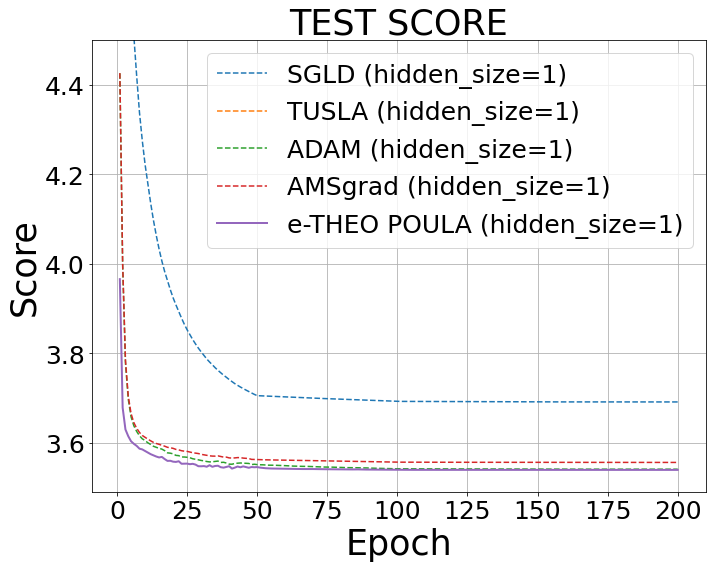}
        \caption{$p=100$ and $\nu=1$}
    \end{subfigure}
    \begin{subfigure}[b]{0.32\textwidth}
        \includegraphics[width=\textwidth]{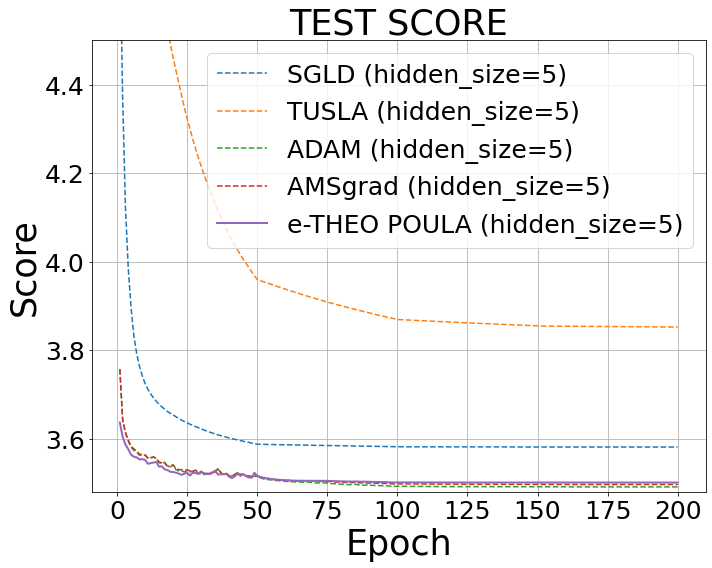}
        \caption{$p=100$ and $\nu=5$}
    \end{subfigure}
    \begin{subfigure}[b]{0.32\textwidth}
        \includegraphics[width=\textwidth]{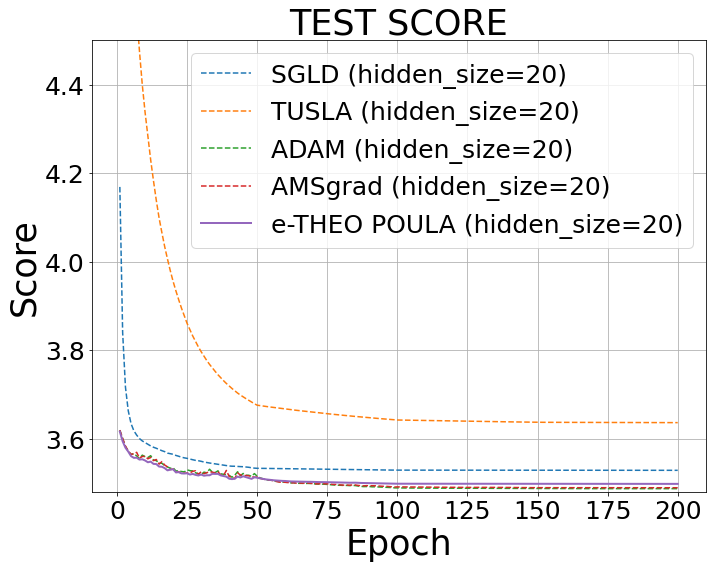}
        \caption{$p=100$ and $\nu=20$}
    \end{subfigure}

    \caption{Test score $V_K^*(s_0)$ of each optimizer for different number of assets under the Black-Scholes model. The parameter settings are summarized in Table~\ref{tab:iid}.}
    \label{fig:iid}
\end{figure}

\paragraph{\textbf{AR($1$) model.}}
We consider the following AR($1$) model:
\begin{equation}\label{eq:ar}
R_k =  \alpha + AR_{k-1} + \overline{\epsilon}_k, \quad k=0,1,\ldots, K-1,
\end{equation}
where $\alpha \in \R^p$, $A\in \R^{p\times p}$, and $\overline{\epsilon}_k \sim N_p(\0, \overline{\Sigma})$, $k = 0, \dots, K-1$, are i.i.d.\ with $\overline{\Sigma}\in \R^{p\times p}$. One observes that, in this setting, the excess returns $\{R_k\}_{k=0}^{K-1}$ are serially dependent. Thus, under the AR($1$) model \eqref{eq:ar}, the MDP problem \eqref{eq:optim_mdp} is reformulated as follows:
\begin{eqnarray}\label{eq:mdp_ar}
&&V_K(s_0)=\min_{g_0, \ldots, g_{K-1} \in \cU} \E[-U(W_K) ]  \\
\mbox{s.t.} &\quad& s_{k+1} = (W_k(\langle g_k(s_k), R_k\rangle + R_f), R_k),\quad k=0, 1, \ldots, K-1. \nonumber
\end{eqnarray}
where $s_k := (W_k, R_{k-1})$ is the augmented state variable such that the state transition is Markovian, ${d_s}:=p+1$, $\eta_k := \epsilon_k$, and $\overline{m} := p$.

We aim to approximate the optimization problem \eqref{eq:mdp_ar} using the deep learning approach \eqref{eq:optim_nn}. More precisely, for numerical experiments, we consider $\{R_k\}_{k=0}^{K-1}$ satisfying the AR($1$) model \eqref{eq:ar} with $p=30$, $K=10$, $\alpha=(0.015, \ldots, 0.015)\in \R^p$, $R_{-1}=\frac{\alpha}{1.15}$, $A_{ii}=-0.15$, $A_{ij}=0$ for $i\neq j$, and $\Sigma_{ii}=0.0238$, $\Sigma_{ij}=0.0027$ for $i\neq j$. We fix $W_0=1$, $R_f=1.03$, $\gamma = 15$ and $D=[0,1]^p$. Moreover, the training scheme is similar to that in the case of the Black-Schole model, but $\numprint{40000}$ training samples (instead of $ \numprint{20000}$) are used for each step. 
For the AR($1$) model \eqref{eq:ar}, numerical or analytical benchmark values are not available. Therefore, we only report and compare the test scores obtained from the following four different optimization algorithms: e-TH$\varepsilon$O POULA, SGLD, ADAM, and AMSGrad.

We use the same hyperparameters as that in the case of the Black-Scholes model for tuning the optimization algorithms, and then record the best test score among all the combinations of hyperparameters for each algorithm. Furthermore, we use three different values, i.e., $5, 20$, and $50$, for the number of neurons $\nu$ in the neural networks. In Figure~\ref{fig:ar}, we show the test scores of the different algorithms for each value of $\nu$. The best test score is reported in Table~\ref{tab:ar}, which shows that e-TH$\varepsilon$O POULA attains the lowest scores compared to SGLD, ADAM, and AMSGrad, as desired.

As in the Black-Scholes model, we provide the training speed of all optimization algorithms. Table~\ref{tab:ar} shows that while the total training time of e-TH$\varepsilon$O POULA takes approximately 20\% longer compared to ADAM, \textit{other optimizers failed to approach values within 1\% of the lowest best score achieved by e-TH$\varepsilon$O POULA} throughout the 200 epochs. This demonstrates in a relevant example that e-TH$\varepsilon$O POULA outperforms the other algorithms under consideration in terms of test accuracy.

\begin{figure}
    \centering
    \begin{subfigure}[b]{0.32\textwidth}
        \includegraphics[width=\textwidth]{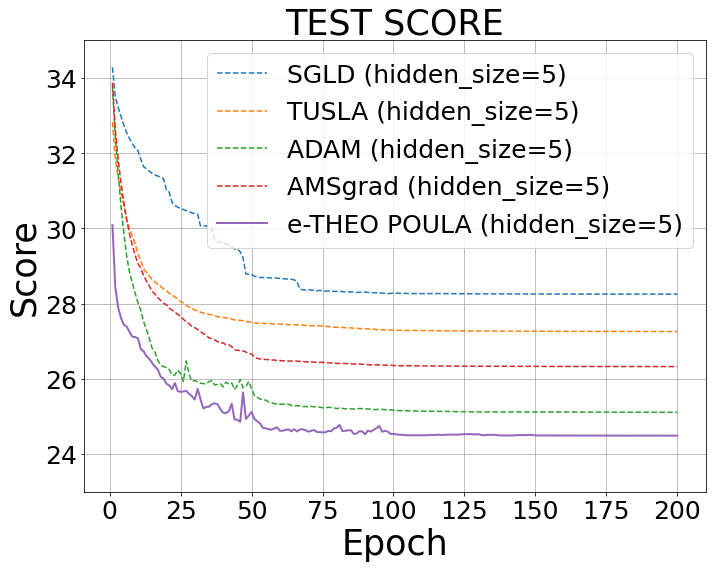}
        \caption{$\nu=5$}
    \end{subfigure}
    \begin{subfigure}[b]{0.32\textwidth}
        \includegraphics[width=\textwidth]{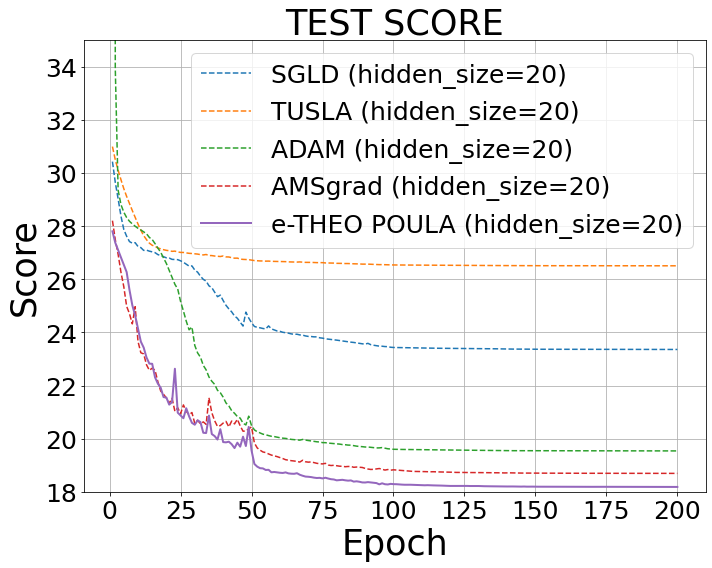}
        \caption{$\nu=20$}
    \end{subfigure}
    \begin{subfigure}[b]{0.32\textwidth}
        \includegraphics[width=\textwidth]{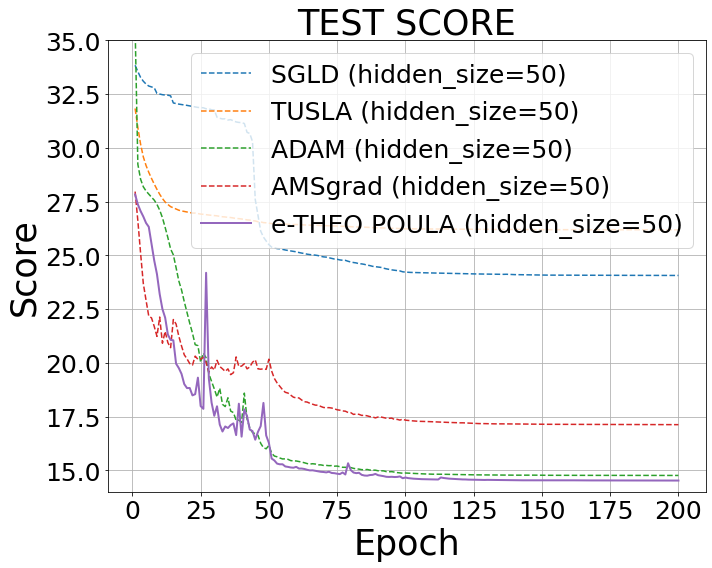}
        \caption{$\nu=50$}
    \end{subfigure}
    \caption{Test score $V_K^*(s_0)$ of each optimizer for different values of $\nu$ under the AR($1$) model.}
    \label{fig:ar}
\end{figure}


\begin{table}[t]
\centering
\begin{tabular}{c | c  c  c | c  c  c}
 \hline
 & \multicolumn{3}{c}{test score}  & \multicolumn{3}{|c}{training speed} \\ \hline
$\nu$ & $5$ & $20$ & $50$ & $5$ & $20$ & $50$ \\ \hline
SGLD & $28.255$ & $23.36$ & $24.06$& NA ($688$)  & NA ($686$)  & NA ($685$) \\
TUSLA & $27.26$ & $26.511$ & $26.189$ & NA ($795$)  & NA ($788$)  & NA ($787$) \\
ADAM & $25.113$ & $19.54$ & $14.76$ & NA ($711$) & NA ($729$)  & NA ($736$) \\
AMSGrad & $26.329$ & $18.693$ & $17.124$ & NA ($758$)  & NA ($754$) & NA ($756$) \\
{\color{blue} e-TH$\varepsilon$O POULA}& ${\color{blue}24.492}$ & ${\color{blue}18.183}$ & ${\color{blue}14.522}$ & {\color{blue}233 (852)}  & {\color{blue}373 (846)} & {\color{blue}431 (855)} \\ \hline
\end{tabular}
\caption{Test score $V_K^*(s_0)$ and two metrics for training speed under the AR(1) model. In the `training speed' column, we report the first time (measured in seconds) when each optimizer reaches a score within a $1\%$ difference from the lowest best score over all the optimizers for each experiment. `NA' means that the optimizer does not achieve a difference of less than $1\%$ from the lowest best score even after $200$ epochs. In addition, the number in each parenthesis indicates the time (measured in seconds) required to train the model for $200$ epochs.  }
\label{tab:ar}
\end{table}

\subsection{Transfer learning in the multi-period portfolio optimization}\label{sub:transfer_learning}

Transfer learning is a machine learning technique where knowledge gained from one task is reused to a related task by leveraging pre-trained models, which allows to save training time and often to achieve better performance \cite{jason14}. On the other hand, dynamic programming is a typical method to solve MDP problems which involve optimal decision making over multiple time steps, allowing to decompose the optimal decision problem over the entire time horizon into several simpler one-time-step optimization problems. This subsection discusses an interesting connection between the dynamic programming principle (DPP) and transfer learning, which allows us to present an example relevant in practice that can be solved using e-TH$\varepsilon$O POULA \eqref{eq:theopoula}-\eqref{eq:expressiontGF} with full theoretical guarantees ensuring its performance. More precisely, by considering an MDP problem as described in Section~\ref{sub:port_op} in a transfer learning setting, we show in Proposition \ref{prop:optim_tl_reg_vasm} that Theorem \ref{thm:opeer} and Corollary \ref{corollary:eerepsilon} can be applied to ensure the convergence of e-TH$\varepsilon$O POULA to the optimal solution of the aforementioned problem. The setting of the problem is given explicitly as follows.

\paragraph{\bf Transfer learning setting} Consider the single-hidden-layer feedforward network (SLFN) $\mathfrak{N}: \R^{\widetilde{d} } \times \R^{d_s} \rightarrow \R^{p}$ with its $i$-th element given by
\begin{equation}\label{eq:slfn}
\mathfrak{N}^i(\widetilde \theta, z) = \tanh\left(\sum_{j = 1}^\nu \widetilde{K}^{ij}_1 \sigma_1(\langle \overline{c}^{j\cdot}, z\rangle + \widetilde{b}_0^j) \right), \quad i = 1, \dots, p,
\end{equation}
where $z \in \R^{d_s}$ is the input vector, $\overline{c} \in \R^{\nu \times {d_s}}$ is the fixed (i.e.\ not trained) weight matrix, $\widetilde K_1 \in \R^{p \times \nu}$ is the weight parameter, $\widetilde b_0 \in \R^\nu$ is the bias parameter, $\widetilde \theta = ([\widetilde{K}_1], \widetilde{b}_0) \in \R^{\widetilde{d} }$ is the parameter of SLFN \eqref{eq:slfn} with $\widetilde{d}  = \nu(p+1)$, and $\sigma_1(y) = \max\{0, y\}$, $y \in \R$, is the ReLU activation function. In our numerical experiments, each element in $\bar c$ is generated by a standard normal distribution. We refer to \cite{cuchiero2020deep}, \cite{gonon2020approximation}, and \cite{neufeld2022chaotic} for the universal approximation property of neural networks with a randomly generated weight matrix. In addition, consider the set of two-hidden-layer feedforward network (TLFN) given by
\begin{align}\label{def:tlfn_sigmoid}
\begin{split}
\overline{\cG}_\nu &= \{f: \R^{d_s} \rightarrow \R^p | f(x) = \tanh(K_3 z + b_3),  z=\sigma_2(K_2 y + b_2), \\
&\qquad y=\sigma_2(K_1 x + b_1), K_1 \in \R^{\nu \times {d_s}}, K_2 \in \R^{\nu \times \nu}, K_3 \in \R^{p \times \nu}, b_1,b_2 \in \R^{\nu}, b_3 \in \R^{p} \}.
\end{split}
\end{align}
We note that TLFN \eqref{def:tlfn_sigmoid} has the same structure as that of TLFN \eqref{def:tlfn}, however, we use here the sigmoid activation function for TLFN \eqref{def:tlfn_sigmoid}, i.e., $\sigma_2(y) = 1/(1+e^{-y})$, $y \in \R^{\nu}$, which is applied componentwise, instead of the ReLU activation function for TLFN \eqref{def:tlfn}.

Fix $K>0$. We consider the case where the asset excess returns follow the Black-Scholes model in \eqref{eqn:bser}, which implies that $\{R_k\}_{k=0}^{K-1}$ are i.i.d.. Then, consider the time-indexed optimization problem of \eqref{eq:optim_bs}:
\begin{eqnarray}
V(t, K,W_t)&=&\min_{g_t, \ldots, g_{K-1} \in \cU} \E[(W_K-\gamma/2)^2 |\cF_t] \label{eq:optim_bsvar}\\
\mbox{s.t.} &\quad& W_{k+1} = W_k ( \langle g_k(W_k), R_k \rangle +R_f),\quad k=t, t+1, \ldots, K-1,\nonumber
\end{eqnarray}
where $t=0,\dots, K-1$. We note that $V(0,K, W_0)$ is the solution of the original problem \eqref{eq:optim_bs}, i.e., $V(0,K, W_0) = V_K(W_0)$, where \eqref{eq:optim_bs} is a special case of the MDP problem \eqref{eq:optim_mdp} with $s_k:=W_k$ for $k=0,1,\ldots, K-1$, $ {d_s} := 1$, $\eta_k:=R_k$, $\overline{m} := p$, and $\bar{h}(s_k, g_k(s_k), \eta_k) := W_k ( \langle g_k(s_k), R_k \rangle +R_f)$.

Denote by $\overline{V}_K^*(\cdot)$ the neural-network-based approximated solution of the MDP problem \eqref{eq:optim_bs} obtained using \eqref{eq:optim_nn}, where $s_K^\cNN(\zeta;\theta)$ recursively defined in \eqref{eq:propsed_nn} is replaced with  $\overline{s}_K^\cNN(\overline{\zeta};\theta)$ which is defined, for $k=0, 1, \dots, K-1$, by
\begin{equation}\label{eq:propsed_nntl}
 \overline{s}_{k+1} =  \bar{h}(\overline{s}_k, \overline{g}_k(\overline{s}_k;\theta_k), R_k), \quad \overline{g}_k(\cdot;\theta_k)\in \overline{\cG}_\nu
\end{equation}
with $\overline{s}_K^{\cNN}(\overline{\zeta}; \theta): = \overline{s}_K$,  $\overline{s}_k:=W_k$, $\overline{\zeta} := (W_0, R_0, \ldots, R_{K-1})$, $\theta = (\theta_0, \ldots, \theta_{K-1})\in \R^{d}$ being the parameter for the neural networks, $d:=K(\nu(d_s +\nu +p +2)+ p)$, and $\overline \cG_\nu$ given in \eqref{def:tlfn_sigmoid}. This implies that
\begin{align}\label{eq:optim_nntl}
V_K(W_0) =V(0,K, W_0)  \approx \overline{V}_K^*(W_0) = \min_{\theta}\E\left[\left(\overline{s}_K^\cNN(\overline{\zeta};\theta)-\gamma/2\right)^2 \right].
\end{align}
We consider the following transfer learning problem: we aim to compute $V(0, K+1, W_0)$ using $\overline{V}_K^*(\cdot)$, where the corresponding $K$ neural networks for $\overline{V}_K^*(\cdot)$ have been already trained to approximate $V_K(\cdot)$ as described in \eqref{eq:optim_nntl}. More precisely, we use the $K$ neural networks which have been already trained to obtain $\overline{V}^*_K(\cdot)\approx V(0, K,\cdot)$ in order to first approximate $V(1, K+1, W_1^{g_0})$ where $W_1^{g_0} = W_0 ( \langle g_0(W_0), R_0\rangle +R_f)$. This together with the DPP and the time-homogeneity of the MDP (see below for details) reduces our task to the training of only one SLFN \eqref{eq:slfn}, instead of $(K+1)$ neural networks involved in $V(0, K+1, W_0)$ as explained in Section~\ref{sub:port_op}. As a consequence, the training time is reduced significantly as illustrated in Table \ref{tab:tl_time}.

To concretely formulate the aforementioned procedures in transfer learning, we utilize two key ideas, i.e., DPP and the time-homogeneity property of MDP\footnote{See \cite{book:bert} and \cite{book:seier} for an overview of stochastic optimal control in discrete-time.}, which can be described explicitly as follows:
\begin{eqnarray}
 \mbox{(Dynamic programming principle) }& V(0, K, W_0) = \min_{g_0 \in \cU} \E[V(1, K, W_1^{g_0}) ], \label{eq:dpp}
\end{eqnarray}
where $W_1^{g_0} = W_0 ( \langle g_0(W_0), R_0\rangle +R_f)$, and
\begin{eqnarray}
 \mbox{(Time-homogeneity) }& V(t_1, t_2, s) =  V(0, t_2-t_1, s), \label{eq:th}
\end{eqnarray}
where $0\leq t_1 \leq t_2$ and $s \in \cS$. By using the time-homogeneity property \eqref{eq:th}, we obtain $V(0, K, s)= V(1, K+1, s)$, for any $s\in \cS$, which implies that $\overline{V}_K^*(\cdot)$ is also an approximated solution of $V(1, K+1, \cdot)$. Then, using the DPP in \eqref{eq:dpp}, $V(0, K+1, W_0)$ can be rewritten as follows:
\begin{align}\label{eq:optim_tl}
V(0, K+1, W_0) &= \min_{g_{0} \in \cU} \E[V(1, K+1, W_1^{g_{0}}) ] \nonumber \\
&\approx \min_{g_{0} \in \cU} \E[\overline{V}^*_K(W_1^{g_{0}}) ] \nonumber \\
&\approx \min_{\widetilde \theta}\E\left[ \overline{V}^*_K\left(W_1^{\mathfrak{N}(\widetilde \theta, W_0)}\right)  \right]=: V_{K+1}^{*,\mathsf{tl}}(W_0),
\end{align}
where $W_1^{\mathfrak{N}(\widetilde \theta, W_0)}:= W_{0}(\langle\mathfrak{N}(\widetilde \theta, W_{0}), R_{0}\rangle +R_f)$ with $\mathfrak{N}$ defined in \eqref{eq:slfn}, and where $\overline{V}_K^*(\cdot)$  is the approximated solution of $V_K(\cdot)$ as described in \eqref{eq:optim_nntl} with $\overline{s}_K^\cNN(\overline{\zeta};\theta)$ specified in \eqref{eq:propsed_nntl}. We note that since $\overline{V}_K^*(\cdot)$ is deterministic, our task of approximately solving the MDP problem \eqref{eq:optim_tl} is equivalent to optimizing the parameters $\widetilde \theta \in \R^{\widetilde{d} }$ of SLFN \eqref{eq:slfn} with $\widetilde{d}  = \nu(p+1)$.

The next proposition shows that our theoretical convergence results for e-TH$\varepsilon$O POULA, provided in Section \ref{sec:main}, can be applied to a regularized version of \eqref{eq:optim_tl}. More precisely, we consider the following regularized optimization problem:
\begin{align} \label{eq:optim_tl_reg}
V(0, K+1, W_0)
& \approx V_{K+1}^{*,\mathsf{tlreg}}(W_0):=\min_{\widetilde{\theta}}\left(\E\left[ \overline{V}^*_K\left(W_1^{\mathfrak{N}(\widetilde{\theta}, W_0)}\right)  \right]+ \frac{\eta}{2(r+1)}|\widetilde{\theta}|^{2(r+1)}\right)
\end{align}
where $r\geq1/2$ and $\eta>0$. 
\begin{proposition}\label{prop:optim_tl_reg_vasm}The optimization problem \eqref{eq:optim_tl_reg} satisfies Assumptions \ref{asm:AI}-\ref{asm:AC} in Section \ref{sec:main}.
\end{proposition}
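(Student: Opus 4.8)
The strategy is to cast \eqref{eq:optim_tl_reg} into the abstract framework of Section~\ref{sec:theopoulasting}, read off the stochastic gradient $H=G+F$, and verify Assumptions~\ref{asm:AI}--\ref{asm:AC} separately. Setting $g(\widetilde\theta):=\E[\overline{V}_K^*(W_1^{\mathfrak{N}(\widetilde\theta,W_0)})]$, the objective is $u(\widetilde\theta)=g(\widetilde\theta)+\tfrac{\eta}{2(r+1)}|\widetilde\theta|^{2(r+1)}$, which suggests the decomposition \eqref{eq:expH} with
\[
F(\widetilde\theta,x)=\eta\,\widetilde\theta\,|\widetilde\theta|^{2r},\qquad
G(\widetilde\theta,x)=\nabla_{\widetilde\theta}\,\overline{V}_K^*\!\left(W_0\big(\langle\mathfrak{N}(\widetilde\theta,W_0),R_0\rangle+R_f\big)\right),
\]
where the data $x$ collects the random inputs $(W_0,R_0)$. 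The key structural observations I would record first are that $\overline{V}_K^*$ is a fixed deterministic map obtained by composing the trained \emph{sigmoidal} networks \eqref{def:tlfn_sigmoid}, hence $C^\infty$ with locally bounded derivatives, and that the output of $\mathfrak{N}$ lies in $[-1,1]^p$, so that the only non-smoothness in $G$ enters through the ReLU activation $\sigma_1$ of the SLFN \eqref{eq:slfn}.

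The assumptions on $F$ are routine polynomial estimates. Since $r\ge 1/2$, the map $\widetilde\theta\mapsto\widetilde\theta|\widetilde\theta|^{2r}$ is $C^1$ with polynomially growing derivative, giving the bound $|F(\widetilde\theta,x)-F(\overline{\widetilde\theta},x)|\le C(1+|\widetilde\theta|+|\overline{\widetilde\theta}|)^{2r}|\widetilde\theta-\overline{\widetilde\theta}|$ of Assumption~\ref{asm:AF}; monotonicity of this map together with $\langle F(\widetilde\theta,x),\widetilde\theta\rangle=\eta|\widetilde\theta|^{2r+2}$ then yields the convexity-at-infinity estimate of Assumption~\ref{asm:AC}, the dominance of the $|\widetilde\theta|^{2r+2}$ term over the at-most-linear-in-$\widetilde\theta$ growth of $G$ being exactly what forces $r\ge 1/2$. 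For Assumption~\ref{asm:AI} I would take $\theta_0$ with finite moments of all orders and use that $R_0$ is a shifted log-normal vector under \eqref{eqn:bser}, so $\E[|R_0|^p]<\infty$ for every $p$; combined with the boundedness of $\mathfrak{N}$ this gives the required integrability of $X$ and of $H(\widetilde\theta,X)$.

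The heart of the proof is Assumption~\ref{asm:AG}. I would first establish $|G(\widetilde\theta,x)|\le C(1+|\widetilde\theta|)\psi(x)$ with $\E[\psi(X)^p]<\infty$: the chain rule gives $G=(\overline{V}_K^*)'(W_1)\,W_0\,\langle\nabla_{\widetilde\theta}\mathfrak{N}(\widetilde\theta,W_0),R_0\rangle$ with $W_1:=W_0(\langle\mathfrak{N},R_0\rangle+R_f)$ confined, for fixed $x$, to a bounded interval (because $\mathfrak{N}\in[-1,1]^p$), while $\nabla_{\widetilde\theta}\mathfrak{N}$ grows at most linearly in $\widetilde\theta$ (only through $\sigma_1(a_j)$ in the $\widetilde{K}_1$-derivatives, the $\widetilde{b}_0$-derivatives being bounded by an indicator). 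For the increment $\E[|G(\widetilde\theta,X)-G(\overline{\widetilde\theta},X)|]$ I would split $G$ into its continuous part (the $\widetilde{K}_1$-derivatives and all smooth prefactors, whose increments are controlled pointwise in $x$ by the boundedness of $\tanh',\tanh'',(\overline{V}_K^*)',(\overline{V}_K^*)''$ on the relevant compact set, times $|\widetilde\theta-\overline{\widetilde\theta}|$) and its discontinuous part, carried by the factors $\1_{\{a_j(\widetilde\theta)\ge 0\}}$ with $a_j:=\langle\overline{c}^{j\cdot},z\rangle+\widetilde{b}_0^j$ coming from $\sigma_1'$. For the latter I would telescope over the $\nu$ hidden units and bound each term by $\E[|\1_{\{a_j(\widetilde\theta)\ge 0\}}-\1_{\{a_j(\overline{\widetilde\theta})\ge 0\}}|\,\Phi(X)]$, where $\Phi$ collects the bounded smooth prefactors and the $R_0$-moments; since the discontinuity set is the hyperplane $\{a_j=0\}$ in the network input $z$, averaging over the (absolutely continuous, bounded-density) law of that input smooths the jump exactly as in example~\ref{item:exampleiv} of Remark~\ref{rmk:sturctureHtoyexamples}, yielding $\E[|\1_{\{a_j(\widetilde\theta)\ge 0\}}-\1_{\{a_j(\overline{\widetilde\theta})\ge 0\}}|\,\Phi(X)]\le C|\widetilde{b}_0^j-\overline{\widetilde{b}}_0^j|$ because $\langle\overline{c}^{j\cdot},z\rangle$ inherits a bounded density (here $\overline{c}^{j\cdot}\ne 0$ is used). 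Collecting the two parts gives $\E[|G(\widetilde\theta,X)-G(\overline{\widetilde\theta},X)|]\le L(1+|\widetilde\theta|+|\overline{\widetilde\theta}|)^{\rho}|\widetilde\theta-\overline{\widetilde\theta}|$, which is Assumption~\ref{asm:AG}.

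The main obstacle is precisely this last averaging step for the ReLU-induced discontinuity. One must ensure that the variable entering the pre-activation of $\sigma_1$ genuinely carries a bounded density rather than being frozen at a single value, and one must keep the $\widetilde\theta$-dependent (but bounded) smooth prefactors and the $R_0$-moments under uniform control while crossing the discontinuity surface, so that the cross terms between the continuous and discontinuous parts do not spoil the Lipschitz-in-average bound. Once Assumptions~\ref{asm:AI}--\ref{asm:AC} are in place, Theorem~\ref{thm:opeer} and Corollary~\ref{corollary:eerepsilon} apply verbatim to \eqref{eq:optim_tl_reg}.
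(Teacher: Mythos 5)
Your proposal is correct and follows essentially the same route as the paper: the same decomposition $H=G+F$ with $F(\widetilde\theta,x)=\eta\widetilde\theta|\widetilde\theta|^{2r}$, the same polynomial estimates for $F$, and the same treatment of the ReLU-induced discontinuity in $G$ via averaging the indicator $\1_{\{\langle\overline{c}^{j\cdot},z\rangle+\widetilde{b}_0^j\ge 0\}}$ over the bounded density of the pre-activation input. The only difference is presentational: the paper first expands $\overline{V}_K^*$ explicitly as a quadratic form in the trained network $\widetilde{g}_1$ and the moments of $R_1$, writes out the full componentwise gradient in \eqref{eq:fixedfgexp1}--\eqref{eq:fixedfgexp4}, and then delegates the verification of Assumptions \ref{asm:AI}--\ref{asm:AC} (with $q=2$, $r=1$, $\rho=7$) to the proof of \cite[Proposition 3.1]{lim2021nonasymptotic}, whereas you carry out that verification in-line.
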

\begin{proof} See Appendix \ref{sec:optim_tl_reg_proof}.
\end{proof}
Thus, by using Proposition \ref{prop:optim_tl_reg_vasm}, Theorem \ref{thm:opeer} and Corollary \ref{corollary:eerepsilon} can be applied to the optimization problem \eqref{eq:optim_tl_reg}, which provide theoretical guarantees for e-TH$\varepsilon$O POULA \eqref{eq:theopoula}-\eqref{eq:expressiontGF} to find approximate minimizers of \eqref{eq:optim_tl_reg}. We refer to Section \ref{sec:main} for the precise non-asymptotic convergence bounds for e-TH$\varepsilon$O POULA.
\begin{remark} We would like to comment on the regularization term $\eta|\widetilde{\theta}|^{2(r+1)}/(2(r+1))$, $\widetilde{\theta}\in \R^{\widetilde{d}}$, added in the optimization problem \eqref{eq:optim_tl_reg}. Theoretically, by adding this term, (part of) the stochastic gradient of the objective function \eqref{eq:optim_tl_reg}, i.e., $F$ defined  in \eqref{eq:fixedfgexp1}, satisfies Assumption \ref{asm:AC}. This is crucial in obtaining an upper estimate for the expected excess risk as provided in Theorem \ref{thm:opeer}. Numerically, adding the aforementioned regularization term does not affect essentially the simulation results due to the smallness of the regularization parameter $\eta$ (e.g., $\eta$ is set to be $10^{-6}$ in the numerical experiments). This can also be seen from the numerical results in Table~\ref{tab:tl}, where we obtain similar results compared to those obtained by solving the original (unregularized) problem \eqref{eq:optim_tl} with $\eta=0$. 
\end{remark}

\paragraph{\bf Comparison with full learning setting} We compare the performance between the two training methods, i.e., \textit{full training} and \textit{transfer learning}.  More precisely, as discussed in Section~\ref{sub:port_op}, full training refers to approximating $V(0, K+1, W_0)$ using $(K+1)$ neural networks defined in \eqref{eq:propsed_nntl} with the structure of each neural network specified in \eqref{def:tlfn_sigmoid}, whereas transfer learning refers to approximately solving $V(0, K+1, W_0)$ using \eqref{eq:optim_tl} with $\overline{V}_K^*(\cdot)$ obtained using \eqref{eq:propsed_nntl}. 
It is worth emphasizing that the dimension $\widetilde{d}:=\nu(p+1)$ of the parameters in transfer learning described in \eqref{eq:optim_tl} is significantly smaller than that of the parameters in full learning, i.e., $d:=(K+1)(\nu(d_s +\nu +p +2)+ p)$.

To generate sample paths under the Black-Scholes model, in both full training and transfer learning settings, we use identical parameters as in Table~\ref{tab:iid} except that $W_0$ is uniformly distributed on $[0.99, 1.01]$, i.e., $W_0\sim \text{Uniform}([0.99, 1.01])$. Then, we compute $V(0, K+1, W_0) \equiv V_{K+1}(W_0)$ defined in \eqref{eq:optim_bs} with $K=40$ for $p=\{5, 50\}$ and $K=30$ for $p=100$. The hidden size $\nu$ for the SLFN and TLFN is specified as follows: $\nu=\{1, 5, 10\}$ for $p=5$ and $\nu=\{1,5,20\}$ for $p=\{50, 100\}$. Moreover, we set $r=1$ and $\eta = 10^{-6}$. All the models are trained by e-TH$\varepsilon$O POULA with the same hyperparameters as in Section~\ref{sub:port_op} and batch size of $128$. Then, the test score is computed using  $\numprint{50000}$ test samples. Table~\ref{tab:tl} summarizes the test scores, i.e., the approximated values $V_{K+1}^*(W_0)$, $V_{K+1}^{*,\mathsf{tl}}(W_0)$, and $V_{K+1}^{*,\mathsf{tlreg}}(W_0)$ of $V(0,K+1, W_0)$, computed from the full training, the transfer learning, and the transfer learning with regularization, respectively, while Table~\ref{tab:tl_dim} shows the number of parameters to be determined for each experiment. The results show that transfer learning yields a similar test score in comparison with that of full training while the dimension of parameters of the transfer learning is significantly lower than that of full training. In addition, we measure the training time for transfer learning and full training to demonstrate the computational efficiency of the former approach. Table~\ref{tab:tl_time} shows that the training time of transfer learning is at least three times faster than that of full learning.

\begin{table}[t]
\centering
\begin{tabular}{c | c  c  c  | c c c | c c c}
 \hline
 & \multicolumn{3}{c|}{$p=5$, $K=40$}  & \multicolumn{3}{c|}{$p=50$, $K=40$} & \multicolumn{3}{c}{$p=100$, $K=30$} \\  \hline
$\nu$ & $1$ & $5$ & $10$ & $1$ & $5$ & $20$  & $1 $& $5$ & $20$ \\ \hline
$V_{K+1}^*(W_0)$ & $0.830$ & $0.820$ & $0.818$  & $2.044$ & $2.037$ & $2.036$ & $3.530$ & $3.483$ & $3.481$ \\
$V_{K+1}^{*,\mathsf{tlreg}}(W_0)$ & $0.830$ & $0.820$ & $0.819$ & $2.047$ &$2.042$ &$2.041$ & $3.530 $& $3.491$ & $3.489$  \\
$V_{K+1}^{*,\mathsf{tl}}(W_0)$  & $0.830$ & $0.820$ & $0.819$ & $2.047$ & $2.042$ & $2.041$ & $3.530$ & $3.491$ & $3.489$ \\ \hline
\end{tabular}
\caption{Test scores for the full training, the transfer learning, and the transfer learning with regularization under the Black-Scholes model.}
\label{tab:tl}
\end{table}


\begin{table}[t]
\centering
\begin{tabular}{c | c  c  c | c c c | c c c }
 \hline
 & \multicolumn{3}{c|}{$p=5$, $K=40$} & \multicolumn{3}{c|}{$p=50$, $K=40$} & \multicolumn{3}{c}{$p=100$, $K=30$} \\  \hline
$\nu$ & $1 $& $5$ & $10$ & $1$ & $5$ & $20$ & $1$ & $5$ & $20$ \\ \hline
Full Training & $574$ & $\numprint{2870}$ & $\numprint{7585}$ & $\numprint{4264}$ &  $\numprint{13940}$ &  $\numprint{61910}$ &  $\numprint{6324}$ &  $\numprint{19840}$ &  $\numprint{79360}$\\
TL with reg& $6$ & $30$ & $60$ & $51$ & $255$ &  $\numprint{1020}$ & $101$ & $505$ &  $\numprint{2020}$ \\
TL & $6$ & $30$ & $60$ & $51$ & $255$ &  $\numprint{1020}$ & $101$ & $505$ &  $\numprint{2020}$ \\ \hline

\end{tabular}
\caption{Number of parameters for the transfer learning (with regularization) and full training. `TL with reg.' and `TL' stand for 'Transfer Learning with regularization' and 'Transfer Learning,' respectively.}
\label{tab:tl_dim}
\end{table}

\begin{table}[t]
\centering
\begin{tabular}{c | c  c  c | c c c | c c c}
 \hline
 & \multicolumn{3}{c|}{$p=5$, $K=40$}  & \multicolumn{3}{c|}{$p=50$, $K=40$} & \multicolumn{3}{c}{$p=100$, $K=30$}\\  \hline
$\nu$ & $1 $& $5$ & $10$ & $1$ & $5$ & $20$ & $1$ & $5$ & $20$\\ \hline
$V_{K+1}^*(W_0)$ & $593.6$ & $596.7$ & $594.6$ & $599.4$ & $594.1$ & $606.5$ & $440.5$ & $465.1 $& $424.5$ \\
$V_{K+1}^{*,\mathsf{tlreg}}(W_0)$ & $173.0$ & $173.2$ & $173.4$ & $171.3 $& $170.4$ & $176.8$  & $143.1$ & $156.0$ & $158.2 $\\
$V_{K+1}^{*,\mathsf{tl}}(W_0)$& $178.7$ & $178.9 $ & $182.0$ & $180.2 $& $184.8$ &$ 186.3$ & $123.3$ &$ 131.4$ & $160.5$\\ \hline
\end{tabular}
\caption{Training time (measured in seconds) for the full training, the transfer learning, and the transfer learning with regularization.}
\label{tab:tl_time}
\end{table}


\subsection{Non-linear Gamma regression}\label{sub:gamma}
In this subsection, we consider optimization problems involving Gamma regression models. 
We are interested in non-linear Gamma regression problems which extends the linear Gamma regression model by replacing its linear regressor function with a neural network in order to incorporate non-linear relations of the input variables. This approach is widely used in insurance business to predict insurance claim sizes, see, e.g., \cite{frees:14,garrido:16,renshaw:94,yang:18}.


Here, we provide an example of a non-linear Gamma regression model based on neural networks which can be used to predict a target variable $Y\in (0, \infty)$ given an input variable $ Z\in \R^{m}$. Under the assumption that $Y$ follows a certain Gamma distribution, its logarithmic mean function can be estimated by minimizing the negative log-likelihood (NLL) function associated with its density function \cite{garrido:16}. We then train a neural network to approximately solve this minimization problem. More precisely, in this setting, we assume that $Y$ follows the Gamma distribution with mean $\mu \in  (0, \infty)$ and log-dispersion $\phi \in \R$. Denote by $f_Y: (0, \infty) \to  (0, \infty)$ the probability density function of $Y$ given explicitly by
\[
f_Y(y; \mu, \phi) \equiv f_Y(y):= \frac{1}{y\Gamma(\exp(-\phi))}\left(\frac{y}{\mu\exp(\phi)}\right)^{\exp(-\phi)} e^{-\frac{y\exp(-\phi)}{\mu}}, \quad y \in  (0, \infty),
\]
where $\Gamma(\exp(-\phi))$ denotes the gamma function evaluated at $\exp(-\phi)$. Moreover, we consider the following TLFN $\widehat{\mathfrak{N}}: \R^d\times \R^m \to \R$:
\begin{equation}\label{eq:TLFNgr}
\widehat{\mathfrak{N}}(\theta,  z) :=   \widehat{K}_3\sigma_3\left( \widehat{K}_2\sigma_3\left( \widehat{K}_1 z + \widehat{b}_1\right)+\widehat{b}_2\right)+\widehat{b}_3,
\end{equation}
where $z \in \R^m$ is the input vector, $\theta=( \widehat{K}_1,  \widehat{K}_2,  \widehat{K}_3,  \widehat{b}_1, \widehat{b}_2, \widehat{b}_3)\in \R^d$ is the parameter with $d=d_1(m+d_2+1) + 2d_2+1$, $ \widehat{K}_1 \in \R^{d_1 \times m}$, $ \widehat{K}_2 \in \R^{d_2 \times d_1}$, $ \widehat{K}_3 \in \R^{1 \times d_2}$,  $\widehat{b}_1 \in \R^{d_1}$, $\widehat{b}_2 \in \R^{d_2}$, $\widehat{b}_3 \in \R$, and $\sigma_3(x):=\max\{0,x\} + 0.01\min\{0,x\}$, $x \in \R$, is the Leaky-ReLU activation function applied componentwise. Then, we model the logarithmic mean function $\widehat{\mu}:\R^d \times \R^m \to  (0, \infty)$ of $Y$ by
\begin{equation}\label{eq:sev}
\log \widehat{\mu}(\theta, z) = \log \mathbb E[Y|Z=z,\theta]:= \widehat{\mathfrak{N}}(\theta, {z}),
\end{equation}
or, equivalently, $\widehat{\mu}(\theta, z) := \exp(\widehat{\mathfrak{N}}(\theta, z))$ where $\widehat{\mathfrak{N}}$ is a TLFN defined in \eqref{eq:TLFNgr}. Denote by $\Theta :=(\theta, \phi)\in \R^{d+1}$. The mean function $\widehat{\mu}$ defined in \eqref{eq:sev} can be estimated by minimizing the NLL function $\ell:  (0, \infty) \times \R^{m} \times \R^{d+1} \rightarrow \R$ given by
\begin{align}\label{eq:nll}
\begin{split}
&\ell(y, z, \Theta) := -\log f_Y(y;\widehat{\mu}(\theta, z), \phi)   \\
&=\log  y +\log \Gamma\left(\frac{1}{\exp(\phi)}\right) -\frac{1}{\exp(\phi)}\left(\log\left(\frac{ y}{\exp(\phi)}\right) - \widehat{\mathfrak{N}}(\theta,z) \right)   +  \frac{y}{\exp(\phi)}\exp{(-\widehat{\mathfrak{N}}(\theta, z) )},
\end{split}
\end{align}
see, e.g., \cite{garrido:16}, and the associated regularized optimization problem (for some $r, \eta>0$) is given as follows:
\begin{equation}\label{eqn:optimization_real}
\text{minimize} \quad \R^{d+1} \ni \Theta \mapsto u(\Theta) :=  \E[\ell(Y, Z, \Theta)]+\frac{\eta}{2(r+1)}|\theta|^{2(r+1)}.
\end{equation}

For the numerical experiments, we consider the auto-insurance claim data from ``freMTPL2sev'' in the R package ``CASdatasets'' \cite{casdataset}, which contains $\widetilde{N}=\numprint{24944}$ observations. Its $i$-th observation, $i = 1, \dots, \widetilde{N}$, consists of a target variable, denoted by $y_i\in  (0, \infty)$, indicating the average claim size for one year and an input vector, denoted by $\bz_i \in \mathbb R^m$, containing relevant quantities including, e.g., driver's age, vehicle's age, and region. More precisely, in this case, for each $i$, the input vector ${\bz}_i \in \R^m$ with $m=65$ contains $4$ continuous variables and $7$ categorical variables. We refer to `freqMTPL' in \url{http://cas.uqam.ca/pub/web/CASdatasets-manual.pdf} for the precise description of the attributes in each input feature. We note that the dimension for each input vector ${\bz}_i$ is $65$. This is due to the fact that each categorical variable contains certain number of classes, for example, the variable ``VehPower'' contains $6$ classes, and each element in ${\bz}_i$ represents one class. Hence, each ${\bz}_i\in \R^{65}$ contains $61$ classes and $4$ continuous variables as its elements.

For the training and testing purposes, we split the dataset such that the training set contains $70\%$ of the observations and the test set contains $30\%$ of the observations.


We employ e-TH$\varepsilon$O POULA, SGLD defined in \eqref{eqn:sgld}, ADAM, and AMSGrad to solve the optimization problem \eqref{eq:nll}-\eqref{eqn:optimization_real} using the aforementioned dataset. Set $r=0$, $\eta=0.0005$. We note that the optimization problem \eqref{eq:nll}-\eqref{eqn:optimization_real} is then equivalent to an $\ell_2$ regularized optimization problem. We search the hyperparameters for e-TH$\varepsilon$O POULA: $\lambda=\{0.1, 0.01, 0.001\}$, $\epsilon=\{10^{-2}, 10^{-4}, 10^{-8}\}$, and $\beta=10^{12}$. For SGLD, we use the following hyperparameters: $\lambda=\{0.1, 0.01, 0.001, 0.0001\}$ and $\beta=10^{12}$. For ADAM and AMSGrad, the hyperparameters are chosen among $\lambda=\{0.1, 0.01, 0.001\}$ where $\epsilon=10^{-8}$, $\beta_1=0.9$, $\beta_2=0.999$ are fixed. Moreover, we decay the learning rate by $10$ after $25$ epochs to all the optimization algorithms. In addition, TLFN \eqref{eq:TLFNgr} with 100 neruons on each layer is trained for $50$ epochs with $128$ batch size. Each experiment is run three times to compute the mean and standard deviation of NLL on the test set.

Figure~\ref{fig:nll} shows the learning curves for the NLL on both the training and test set for each optimizer. Table~\ref{tab:nll} displays the mean and standard deviation of the NLL on the test set. As shown in Table~\ref{tab:nll}, the performance of SGLD is significantly inferior to that of ADAM, AMSGrad, and e-TH$\varepsilon$O POULA, and its learning curves are highly unstable. On the contrary, ADAM, AMSGrad, and e-TH$\varepsilon$O POULA produce very stable learning processes. Moreover, e-TH$\varepsilon$O POULA achieves the lowest test NLL, implying the model trained by e-TH$\varepsilon$O POULA generalizes better than the models found by other optimization algorithms.

\begin{figure}[t]
    \centering
    \includegraphics[width=0.47\textwidth]{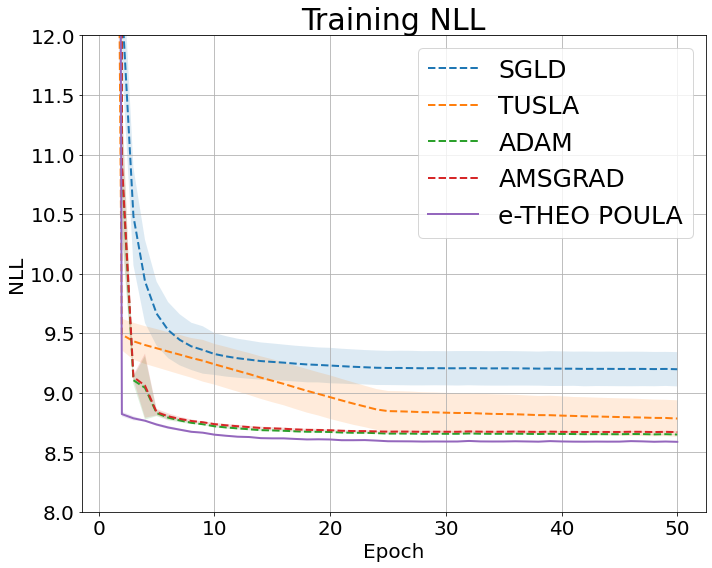}
    \includegraphics[width=0.47\textwidth]{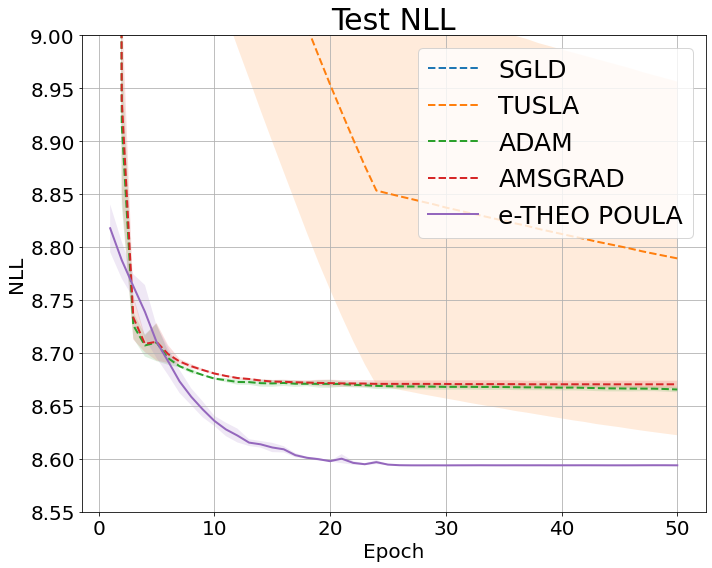}
    \caption{Negative likelihood curve on training and test set. The colored area corresponds to the mean $\pm$ standard deviation for each algorithm.}
    \label{fig:nll}
\end{figure}

\begin{table}[t]
\centering
\begin{tabular}{c|c| c | c | c | c}
 \hline
& SGLD & TUSLA &ADAM  & AMSGrad & {\color{blue} e-TH$\varepsilon$O POULA}   \\  \hline
test NLL &9.93 (0.439) & 8.78 (0.167)& 8.66 (0.002) & 8.66 (0.003) & {\color{blue} 8.59 (0.002)} \\\hline
training time & 129 & 129 & 129 & 128 & {\color{blue} 130} \\\hline
\end{tabular}
\caption{The best NLL evaluated on the test set and training time (measured in seconds) for the non-linear Gamma regression task. We report the mean and standard deviation of the test NLL computed from three experiments with different random seeds where the numbers in parenthesises indicate the standard deviations.}
\label{tab:nll}
\end{table}

\subsection{Conclusion of numerical experiments}\label{sub:simulationsummary} 
For the experiments under the Black-Scholes model as described in Section~\ref{sub:port_op}, ADAM and AMSGrad demonstrate comparable test scores to e-TH$\varepsilon$O POULA and offer advantages in training speed, whereas in the AR($1$) model discussed in Section~\ref{sub:port_op}, 
e-TH$\varepsilon$O POULA outperforms ADAM and AMSGrad in terms of test scores. Furthermore, in terms of training speed in the AR($1$) model,  ADAM and AMSGrad do not
achieve a difference of less than 1\% from the lowest best score obtained by e-TH$\varepsilon$O POULA even after 200 epochs. In addition, in the non-linear gamma regression discussed in Section~\ref{sub:gamma}, e-TH$\varepsilon$O POULA outperforms ADAM and AMSGrad in terms of test scores.

Based on our experiments, we observe that e-TH$\varepsilon$O POULA outperforms ADAM-type optimizers when training neural networks with a larger number of neurons and hidden layers, and when approximating more complex target functions. Therefore, we suggest that ADAM-type optimizers are viable options when a reasonably accurate solution is required quickly. However, when theoretical guarantees on convergence are critical and complex deep learning architectures are involved, we recommend e-TH$\varepsilon$O POULA over ADAM-type optimizers.


\section{Non-asymptotic convergence bounds for e-TH$\varepsilon$O POULA} \label{sec:main}
In this section, we provide non-asymptotic error estimates for e-TH$\varepsilon$O POULA \eqref{eq:theopoula}-\eqref{eq:expressiontGF}, which are established based on the assumptions provided below.
\subsection{Assumptions} \label{sec:assumption} Let the conditions imposed in Section \ref{sec:theopoulasting} be fulfilled, and let $q \in [1, \infty), r \in [q/2, \infty) \cap \N, \rho \in [1, \infty)$ be fixed. 

In the first assumption, we impose moment requirements for the initial value $\theta_0$ of e-TH$\varepsilon$O POULA \eqref{eq:theopoula}-\eqref{eq:expressiontGF} and for the data process $(X_n)_{n\in\N_0}$.
\begin{assumption} \label{asm:AI}
The initial condition $\theta_0$ has a finite $(8r+4)$-th moment, i.e., $\E[|\theta_0|^{8r+4}]<\infty$. The process $(X_n)_{n \in \N_0}$ is a sequence of i.i.d.\ random variables with $\mathcal{L}(X_n)=\mathcal{L}(X)$ for each $n \in \N_0$ and has a finite $(8r+4)\rho$-th moment, i.e., $\E[|X_0|^{(8r+4)\rho}]<\infty$.
\end{assumption}

Recall the definition of $H$ given in \eqref{eq:expH}, which is the sum of $G$ and $F$. In the following assumption, we assume that $G$ satisfies a ``continuity in average'' condition and a growth condition.
\begin{assumption} \label{asm:AG}
There exists a constant $L_G>0$ such that, for all $\theta, \bar{\theta} \in \R^d$,
\[
\E[|G(\theta, X_0)-G(\bar{\theta}, X_0)|] \leq L_G(1+|\theta|+|\bar{\theta}|)^{q-1}|\theta-\bar{\theta}|.
\]
In addition, there exists a constant $K_G >1$, such that for all $\theta \in \R^d, x \in \R^m$,
\[
|G(\theta,x) | \leq K_G(1+|x|)^{\rho}(1+|\theta|)^q.
\]
\end{assumption}

Then, we assume that $F$ is locally Lipschitz continuous. Furthermore, we impose a growth condition on each component of $F$, which enables us to obtain more relaxed step-size restrictions.
\begin{assumption} \label{asm:AF}
There exists a constant $L_F>0$ such that, for all $\theta, \bar{\theta} \in \R^d, x,\bar{x} \in \R^m$,
\[
|F(\theta, x)-F(\bar{\theta}, \bar{x})| \leq L_F(1+|x|+|\bar{x}|)^{\rho-1}(1+|\theta|+|\bar{\theta}|)^{2r}(|\theta-\bar{\theta}|+|x-\bar{x}|).
\]
Furthermore, there exists a constant $K_F>0$ such that for all $\theta \in \R^d, x \in \R^m$, $i = 1, \dots, d$,
\[
|F^{(i)}(\theta,x) | \leq K_F(1+|x|)^{\rho}(1+|\theta^{(i)}|)(1+|\theta|^{2r}).
\]
\end{assumption}
Under Assumptions \ref{asm:AI}-\ref{asm:AF}, one obtains a growth condition for $H$, and a local Lipschitz condition for $h$. We refer to Appendix \ref{sec:remarkproofs} for the proofs of the statements in the following remark.
\begin{remark}\label{rem:growthHlliph} By Assumptions \ref{asm:AI}, \ref{asm:AG}, and \ref{asm:AF}, for any $\theta \in \R^d, x \in \R^m$, one obtains that
\begin{equation*}
|H(\theta,x) | \leq K_H(1+|x|)^{\rho}(1+|\theta|^{2r+1}),
\end{equation*}
where $K_H:=2^{2r}K_G+3\sqrt{2d}K_F$. Moreover, by Assumptions \ref{asm:AI}, \ref{asm:AG}, and \ref{asm:AF}, we have, for any $\theta, \bar{\theta} \in \R^d$,
\begin{equation*}
|h(\theta) - h(\bar{\theta}) | \leq L_h(1+|\theta|+|\bar{\theta}|)^{2r}|\theta-\bar{\theta}|,
\end{equation*}
where $L_h:=\max\{L_G+L_F\E[(1+2|X_0|)^{\rho-1}],1\}$.
\end{remark}

Next, we impose a (local) convexity at infinity condition on $F$.
\begin{assumption} \label{asm:AC}
There exist Borel measurable functions $A:\R^m\to\R^{d\times d}, B:\R^m\to\R^{d\times d}$, and $0\leq \bar{r}<2r$ such that the following holds:
\begin{enumerate}[leftmargin=*]
\item For any $ x \in \R^m, y \in \R^d$, $\langle y, A(x) y\rangle \geq 0, \quad \langle y, B(x) y\rangle \geq 0$.
\item For all $\theta, \theta' \in \R^d$ and $x\in\R^m$,
\begin{align}\label{cailoclip}
\begin{split}
\langle  \theta- \theta' , F(\theta,x)- F(\theta',x)\rangle
&\geq \langle\theta- \theta', A(x) (\theta- \theta')\rangle(|\theta|^{2r} +|\theta'|^{2r}) \\
&\quad -\langle\theta- \theta', B(x) (\theta- \theta')\rangle(|\theta|^{\bar{r}} +|\theta'|^{\bar{r}}).
\end{split}
\end{align}
\item The smallest eigenvalue of $\E[A(X_0)]$ is a positive real number $a$, and the largest eigenvalue of $\E[B(X_0)]$ is a nonnegative real number $b$.
\end{enumerate}
\end{assumption}
Under Assumptions \ref{asm:AI}-\ref{asm:AC}, one can show that $F$ and $h$ satisfy certain dissipativity conditions. Moreover, $h$ further satisfies a one-sided Lipschitz condition. The explicit statements are provided below, and the proofs follow the same ideas as in the proofs of \cite[Remark 2.5, 2.6]{lim2021nonasymptotic}, which can be found in \cite[Appendix 1]{lim2021nonasymptotic}. 
\begin{remark}\label{rem:Fhdissiposl} By Assumptions \ref{asm:AI}, \ref{asm:AG}, \ref{asm:AF}, and \ref{asm:AC}, one obtains, for any $\theta \in \R^d$, that
\begin{equation}\label{eq:Fdissip}
\langle \theta, \E[F(\theta, X_0)]\rangle \geq a_F|\theta|^{2r+2} - b_F,
\end{equation}
where $a_F := a/2$ and $b_F := (a/2+b)R_F^{\bar{r}+2}+dK_F^2\E[(1+|X_0|)^{2\rho}]/(2a)$ with
\[
R_F := \max\{(4b/a)^{1/(2r-\bar{r})}, 2^{1/(2r)}\}.
\]

Furthermore, for any $\theta \in \R^d$, we have
\begin{equation}\label{eq:hdissip}
\langle \theta, h(\theta)\rangle \geq a_h|\theta|^2 - b_h,
\end{equation}
where $a_h := 2^qK_G\E[(1+|X_0|)^{\rho}]$, $b_h := 3(2^{q+1}K_G\E[(1+|X_0|)^{\rho}]/\min{\{1, a_F\}})^{q+2}+b_F$. One notes that due to \cite[Eqn. (25), (26)]{JARNER2000341} and \cite[Theorem 2.32]{beck2014introduction}, \eqref{eq:hdissip} implies that $u$ has a minimum $\theta^* \in \R^d$.

In addition, for any $\theta, \bar{\theta} \in \R^d$, one obtains
\begin{equation}\label{eq:hosl}
\langle \theta-\bar{\theta},h(\theta)-h(\bar{\theta})\rangle\geq -L_R |\theta-\bar{\theta}|^2,
\end{equation}
where $L_R :=  L_h(1+2R)^{2r}>0$ with $R :=\max\{1, (3^{q-1}L_G/a)^{1/(2r-q+1)}, (2b/a)^{1/(2r - \bar{r})} \}$.
\end{remark}

\begin{remark} We provide further justifications of our Assumptions \ref{asm:AI}-\ref{asm:AC}:
\begin{enumerate}
\item In Assumption \ref{asm:AI}, we impose moment requirements for the initial value $\theta_0$ of e-TH$\varepsilon$O POULA and for the data process $(X_n)_{n\in\N_0}$ as they are essential to obtain moment estimates of e-TH$\varepsilon$O POULA. For example, in \eqref{eq:convergencew2ub2} in our convergence analysis, we have the term $\E\left[|\bar{\theta}^{\lambda}_{\lfrf{s}}|^{8r+4}\right]$ which can be upper bounded by using Lemma \ref{lem:2ndpthmmt} as
\[
\E\left[|\bar{\theta}^{\lambda}_{\lfrf{s}}|^{8r+4}\right] \leq \E\left[|\theta_0|^{8r+4}\right]  +\mathring{c}_{4r+2}
\]
for some constant $\mathring{c}_{4r+2}$. Hence, we require $(8r+4)$-th moment of $\theta_0$ to be finite to make sense of $\E\left[|\theta_0|^{8r+4}\right]$. Similarly, by the definition of $\lambda_{p, \max} $ and $\lambda_{\max}$ in \eqref{eq:stepsizemax}, we require $(8r+4)\rho$-th moment of $(X_n)_{n \in \N_0}$ to be finite to make sense of $\E[(1+|X_0|)^{(8r+4)\rho }]$.
\item In Assumption \ref{asm:AG}, we impose a local Lipschitz in average condition and a growth condition on $G$, while in Assumption \ref{asm:AF}, we impose a local (or polynomial) Lipschitz continuous condition on $F$. Assumptions \ref{asm:AG} and \ref{asm:AF} can be viewed as extensions of a global Lipschitz condition, which are part of the assumptions required for the existence and uniqueness of the solution to the SDE defined in \eqref{sdeintro}, see, e.g., \eqref{eq:hdissip} and \eqref{eq:hosl} in Remark \ref{rem:Fhdissiposl}, and \cite[Theorem 1]{krylov1991simple}. Moreover, these conditions also play a crucial role in establishing convergence results for e-TH$\varepsilon$O POULA (see, e.g., \cite{tula,lim2021polygonal,lim2021nonasymptotic,lovas2023taming} for machine learning algorithms and \cite{hutzenthaler2012,eulerscheme,SabanisAoAP} for numerical schemes for SDEs with super-linearly growing coefficients which also impose certain local Lipschitz condition) similar to that of a Lipschitz continuity condition for the SGLD algorithm (see, e.g., \cite{dalalyan,pmlr-v65-dalalyan17a,dk,DM16} and references therein).
\item In Assumption \ref{asm:AC}, we impose a local convexity at infinity condition on $F$. To understand this condition, we first consider the following inequality:
\begin{equation}\label{eq:slconv}
\langle  \theta- \theta' , F(\theta,x)- F(\theta',x)\rangle \geq \langle\theta- \theta', A(x) (\theta- \theta')\rangle(|\theta|^{2r} +|\theta'|^{2r}).
\end{equation}
In the case $r=0$, $F$ is globally Lipschitz continuous in $\theta$ by Assumption \ref{asm:AF} and thus growing linearly, and \eqref{eq:slconv} above becomes a (local) convexity condition, see also \cite[Assumption 3.9]{convex}. Then, in the case $r>0$, \eqref{eq:slconv} can be viewed as a local convexity condition for a super-linearly growing $F$, where \eqref{eq:slconv} (and \eqref{cailoclip}) are referred to as ``local'' conditions as the RHS of these inequalities depends on the data stream $x$. Now, we observe that \eqref{eq:slconv} implies \eqref{cailoclip} in Assumption \ref{asm:AC}, which indicates that our assumption is weaker. We refer to \eqref{cailoclip} as a convexity ``at infinity'' condition as, while it cannot be seen as a (local) convexity condition due to the subtraction of the non-negative definite term involving $B(x)$, the first term on the RHS of \eqref{cailoclip} dominates when $|\theta|$ is sufficiently large, which results in a strong convexity condition (for $|\theta|$ sufficiently large). For illustrative purposes, we consider the case $F(\theta, x) = \theta^3 - \theta$ for all $\theta \in \R, x \in \R$. We see that $F$ does not satisfy \eqref{eq:slconv}, but it satisfies \eqref{cailoclip} in Assumption \ref{asm:AC} with $A(x)= 1/2, B(x)= 1, r =1 , \bar{r} =0$, i.e.,
\[
(\theta- \theta')(F(\theta, x) -F(\theta', x) )\geq(|\theta|^2+|\theta'|^2)|\theta- \theta'|^2/2 - |\theta- \theta'|^2,
\]
which implies that the following strong convexity condition holds for $|\theta|, |\theta'| \geq \sqrt{2}$:
\[
(\theta- \theta')(F(\theta, x) -F(\theta', x) ) \geq |\theta - \theta'|^2.
\]
We note that Assumption \ref{asm:AC} can be satisfied by a wide class of functions including, e.g., the regularization term in the regularized optimization problem, the double-well potential model, and the Ginzburg-Landau model.
	
Moreover, Assumption \ref{asm:AC} is crucial as it can be used to deduce a dissipativity condition of $h$, i.e., \eqref{eq:hdissip} in Remark \ref{rem:Fhdissiposl}, which is then used to deduce a Lyapunov drift condition in Lemma \ref{lem:Lyapunovdriftc}. This is a key assumption to obtain the convergence results in, e.g., Wasserstein distances, in non-convex optimization, see \cite[Assumption 2.2]{eberle2019quantitative}. In addition, we note that the SDE associated with the optimization problem \eqref{eq:obju} is the Langevin SDE \eqref{eq:sde} with a super-linearly growing drift coefficient, and e-TH$\varepsilon$O POULA can be viewed as its numerical approximation. Then, Assumption \ref{asm:AC} allows us to deduce a one-sided Lipschitz condition on $h$, i.e., \eqref{eq:hosl} in Remark \ref{rem:Fhdissiposl}, which is one of the standard assumptions required to ensure a unique solution of the Langevin SDE, see \cite[Theorem 1]{krylov1991simple}, and to establish the convergence results for numerical schemes of SDEs with super-linearly growing coefficients, see \cite[Assumption 3.1]{higham2002strong}.
\end{enumerate}
\end{remark}
\subsection{Main results} \label{sec:mr}
Define, for any $p \in \N$,
\begin{equation}\label{eq:stepsizemax}
\lambda_{p, \max} := \min\left\{1,  \frac{1}{a_F}, \frac{1}{a_F^2},  \frac{\min\{(a_F/K_F)^2, (a_F/K_F)^{2/(2p-1)}\}}{16K_F^2 p^2(2p-1)^2(\E[(1+|X_0|)^{2p\rho }])^2}  \right\}, \qquad \lambda_{\max} : = \lambda_{4r+2, \max},
\end{equation}
where $a_F: =a/2$.

Our first result provides a non-asymptotic error bound in Wasserstein-1 distance between the law of e-TH$\varepsilon$O POULA \eqref{eq:theopoula}-\eqref{eq:expressiontGF} and $\pi_{\beta}$.
\begin{theorem}\label{thm:convergencew1}
Let Assumptions \ref{asm:AI}, \ref{asm:AG}, \ref{asm:AF}, and \ref{asm:AC} hold.  Then, for any $\beta>0$ there exist constants $C_0, C_1,C_2>0$ such that, for any $0<\lambda\leq \lambda_{\max}$ with $\lambda_{\max}$ given in \eqref{eq:stepsizemax}, and $n \in \N_0$,
\[
W_1(\mathcal{L}(\theta^{\lambda}_n),\pi_{\beta}) \leq C_1 e^{-C_0 \lambda n}(\E[|\theta_0|^{4(2r+1)}]+1) +C_2\sqrt{\lambda},
\]
where $C_0, C_1,C_2$ are given explicitly in \eqref{eq:convergencew1consts}.
\end{theorem}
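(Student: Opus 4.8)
The plan is to compare e-TH$\varepsilon$O POULA with the continuous-time Langevin diffusion \eqref{sdeintro}, whose invariant law is $\pi_\beta$, and to split the error into an \emph{ergodicity} part and a \emph{discretisation} part. First I would embed the algorithm into continuous time by defining the interpolation $(\bar\theta^\lambda_t)_{t\geq0}$, driven by the same Brownian motion as \eqref{sdeintro} but with piecewise-frozen tamed drift, namely
\[
\rmd\bar\theta^\lambda_t = -H_\lambda\big(\bar\theta^\lambda_{\lfrf{t/\lambda}\lambda}, X_{\lcrc{t/\lambda}}\big)\,\rmd t + \sqrt{2\beta^{-1}}\,\rmd B_t,
\]
so that $\mathcal{L}(\bar\theta^\lambda_{n\lambda})=\mathcal{L}(\theta^\lambda_n)$. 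Writing $T=n\lambda$ and letting $(Z_t)$ solve \eqref{sdeintro} from $\theta_0$, the triangle inequality yields
\[
W_1(\mathcal{L}(\theta^\lambda_n),\pi_\beta) \leq W_1\big(\mathcal{L}(\bar\theta^\lambda_{T}),\mathcal{L}(Z_{T})\big) + W_1\big(\mathcal{L}(Z_{T}),\pi_\beta\big).
\]

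For the second (ergodicity) term I would invoke the contraction of the Langevin semigroup. The dissipativity \eqref{eq:hdissip} provides the Lyapunov drift condition of Lemma \ref{lem:Lyapunovdriftc}, which together with the one-sided Lipschitz bound \eqref{eq:hosl} places us in the setting of Eberle's reflection-coupling estimate \cite{eberle2019quantitative}; this produces a rate $\hat C_0>0$ and a constant such that $W_1(\mathcal{L}(Z_T),\pi_\beta)\leq \hat C_1 e^{-\hat C_0 T}(\E[|\theta_0|^{4(2r+1)}]+1)$, where the initial moment enters through the Lyapunov function. Taking $T=\lambda n$ gives exactly the $C_1 e^{-C_0\lambda n}$ contribution.

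The first (discretisation) term is where the work concentrates. A single Grönwall estimate over $[0,T]$ would accumulate a factor $e^{L_R T}$ and destroy the bound, so instead I would reuse the contraction of the reference dynamics: comparing $\bar\theta^\lambda$ with $Z$ step by step under a synchronous coupling, the \emph{local} error over each interval $[k\lambda,(k+1)\lambda]$ is controlled in $W_2$ and is of order $\lambda^{3/2}$, and summing these local errors against the geometric contraction factor collapses the series to an $O(\sqrt\lambda)$ bound, which, since $W_1\leq W_2$, furnishes the $C_2\sqrt\lambda$ term. The local error itself splits into a genuine Euler discretisation error, handled via It\^o's formula together with the uniform $(8r+4)$-th moment estimates of Lemma \ref{lem:2ndpthmmt}, and a taming error $|H_\lambda-H|$ which, by the explicit form \eqref{eq:expressiontGF}, is $O(\sqrt\lambda)$ multiplied by polynomially growing terms that the moment bounds keep finite.

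The main obstacle is that $G$, and hence $H$, is only Lipschitz \emph{in average} (Assumption \ref{asm:AG}) and is genuinely discontinuous, so no pathwise Lipschitz control of the drift is available. I would therefore carry out every drift comparison in expectation, replacing the difference $H(\theta,X)-H(\bar\theta,X)$ by its conditional expectation and exploiting that the frozen interpolation point $\bar\theta^\lambda_{\lfrf{t/\lambda}\lambda}$ is independent of the fresh sample $X_{\lcrc{t/\lambda}}$; this independence is precisely what renders the averaged condition usable and lets the locally Lipschitz term $F$ (Assumption \ref{asm:AF}) be treated pathwise while the discontinuous $G$ is treated only after integration against $\mathcal{L}(X)$. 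The second difficulty, the super-linear growth of order $2r+1$ of $H$, is absorbed by the taming mechanism combined with the moment bounds, which is the reason the $(8r+4)$-th moment of $\theta_0$ is imposed in Assumption \ref{asm:AI} and surfaces in the final constant; tracking the step-size restrictions $0<\lambda\leq\lambda_{\max}$ throughout then guarantees the constants remain finite.
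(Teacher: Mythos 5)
Your overall architecture (continuous-time interpolation, contraction of the Langevin semigroup for the ergodicity part, accumulation of local discretisation errors against that contraction, averaging to handle the discontinuous $G$, taming plus high moments to handle the super-linear growth) is the right one and matches the paper's in spirit. The paper, however, inserts an auxiliary diffusion $\bar{\zeta}^{\lambda,n}$ restarted from $\bar{\theta}^{\lambda}_{nT}$ at the beginning of each block of $T=\lfrf{1/\lambda}$ steps, producing the three-term splitting \eqref{eq:convergencew1split} rather than your two-term one. This block structure is not cosmetic, and its absence creates a genuine gap in your discretisation estimate.

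The gap is your claim that the one-step local error in $W_2$ is of order $\lambda^{3/2}$. Over one step the drift mismatch contains the term $\lambda\bigl(H(\bar{\theta}^{\lambda}_{k},X_{k+1})-h(\bar{\theta}^{\lambda}_{k})\bigr)$, whose $L^2$ norm is of order $\lambda$ (it is a genuine variance, not a bias), and this dominates the $O(\lambda^{3/2})$ taming and Euler contributions. Summing $O(\lambda)$ local errors against the contraction, $\sum_{k}e^{-\dot{c}\lambda(n-k)}=O(1/(\dot{c}\lambda))$, yields $O(1)$, not $O(\sqrt{\lambda})$. The rate $\sqrt{\lambda}$ is only recovered by exploiting that this term has zero conditional mean (since $h(\theta)=\E[H(\theta,X_0)]$ and $X_{k+1}$ is independent of $\bar{\theta}^{\lambda}_{k}$), so that its contributions at different steps are orthogonal and accumulate in variance rather than in first moment; but Eberle's contraction acts on the semimetric $w_{1,2}$, i.e.\ on first moments, so this cancellation cannot be performed term by term inside your step-by-step sum. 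The paper's fix is precisely the unit-time block: within one block a synchronous-coupling Gr\"onwall estimate on $\E|\bar{\zeta}^{\lambda,n}_t-\bar{\theta}^{\lambda}_t|^2$ makes the zero-mean cross term vanish exactly (the fifth term on the right-hand side of \eqref{eq:convergencew2ub3}), the Gr\"onwall factor stays bounded by $e^{4L_R}$ because $\lambda T\le 1$, and the per-block $W_2$ error comes out as $O(\sqrt{\lambda})$ (Lemma \ref{lem:w1converp1}); only then are the blocks summed against the $w_{1,2}$-contraction with $W_1\le w_{1,2}$ (Lemma \ref{lem:w1converp2} and \eqref{eq:w1converp3}). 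Your remark about ``carrying out drift comparisons in expectation'' is aimed at the discontinuity of $G$ and does not supply this cancellation. A secondary, fixable imprecision: the contraction carries a prefactor $\hat{c}>1$, so it must be applied once over the whole remaining horizon per block rather than iterated step by step, which is another reason the block decomposition is needed.
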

Then, we provide a non-asymptotic convergence result in Wasserstein-2 distance  between the law of e-TH$\varepsilon$O POULA \eqref{eq:theopoula}-\eqref{eq:expressiontGF} and $\pi_{\beta}$.
\begin{corollary}\label{thm:convergencew2}
Let Assumptions \ref{asm:AI}, \ref{asm:AG}, \ref{asm:AF}, and \ref{asm:AC} hold. Then, for any $\beta>0$ there exist constants $C_3, C_4,C_5>0$ such that, for any $0<\lambda\leq \lambda_{\max}$ with $\lambda_{\max}$ given in \eqref{eq:stepsizemax}, and $n \in \N_0$,
\[
W_2(\mathcal{L}(\theta^{\lambda}_n),\pi_{\beta})\leq C_4 e^{-C_3 \lambda n}(\E[|\theta_0|^{4(2r+1)}]+1)^{1/2} +C_5\lambda^{1/4},
\]
where $C_3, C_4,C_5$ are given explicitly in \eqref{eq:convergencew2consts}.
\end{corollary}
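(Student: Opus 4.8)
The plan is to obtain the Wasserstein-2 estimate directly from the Wasserstein-1 estimate of Theorem \ref{thm:convergencew1}, by combining it with uniform-in-$(n,\lambda)$ moment bounds for both $\mathcal{L}(\theta^{\lambda}_n)$ and $\pi_{\beta}$ through an interpolation inequality between Wasserstein distances. The claimed bound is, term by term, the square root of the $W_1$ bound (the exponential prefactor appears with exponent $1/2$ and halved rate, and the discretisation tail changes from $\sqrt{\lambda}$ to $\lambda^{1/4}=(\sqrt{\lambda})^{1/2}$), which is precisely the signature of an estimate of the type $W_2(\mu,\nu)^2\le C\,W_1(\mu,\nu)$ with $C$ depending only on uniformly controlled moments of $\mu$ and $\nu$.

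First I would record the two moment ingredients. Lemma \ref{lem:2ndpthmmt}, together with the moment requirements of Assumption \ref{asm:AI}, provides a bound on $\E[|\theta^{\lambda}_n|^{p}]$ that is uniform over $n\in\N_0$ and $0<\lambda\le\lambda_{\max}$; and the dissipativity \eqref{eq:hdissip}, which forces $u$ to grow at least quadratically at infinity, guarantees that $\pi_{\beta}\wasypropto e^{-\beta u}$ has finite polynomial moments of every order, so that $\int_{\R^d}|\theta|^{p}\,\pi_{\beta}(\rmd\theta)<\infty$. Taking $p$ as large as Assumption \ref{asm:AI} permits, these furnish a single constant $M$, independent of $n$ and $\lambda$, dominating the relevant moments of both laws. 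Writing $\zeta$ for a $W_1$-optimal coupling of $\mathcal{L}(\theta^{\lambda}_n)$ and $\pi_{\beta}$, splitting $\int|\theta-\bar{\theta}|^2\,\rmd\zeta$ over $\{|\theta-\bar{\theta}|\le R\}$ and its complement, and bounding the tail by $R^{-(p-2)}\int|\theta-\bar{\theta}|^{p}\,\rmd\zeta$, one obtains after optimising in $R$ an inequality of the type $W_2(\mathcal{L}(\theta^{\lambda}_n),\pi_{\beta})^2\le C\,W_1(\mathcal{L}(\theta^{\lambda}_n),\pi_{\beta})$ with $C=C(M,p)$ independent of $n$ and $\lambda$.

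Substituting the estimate of Theorem \ref{thm:convergencew1} and using the subadditivity $\sqrt{a+b}\le\sqrt{a}+\sqrt{b}$ then gives
\[
W_2(\mathcal{L}(\theta^{\lambda}_n),\pi_{\beta})\le \sqrt{C\,C_1}\,e^{-(C_0/2)\lambda n}\bigl(\E[|\theta_0|^{4(2r+1)}]+1\bigr)^{1/2}+\sqrt{C\,C_2}\,\lambda^{1/4},
\]
from which one reads off $C_3=C_0/2$, $C_4=\sqrt{C\,C_1}$ and $C_5=\sqrt{C\,C_2}$, matching \eqref{eq:convergencew2consts}. The main obstacle I anticipate is securing the \emph{uniformity} of the moment constant $M$: the interpolation constant $C$ must not depend on $n$ or $\lambda$, which forces the $p$-th moment bound for $\theta^{\lambda}_n$ of Lemma \ref{lem:2ndpthmmt} to hold uniformly over all iterates and all admissible step sizes. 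This uniformity is exactly what the taming construction \eqref{eq:expressiontGF} and the dissipativity \eqref{eq:Fdissip}--\eqref{eq:hdissip} are designed to deliver, preventing the moments from diverging as $n\to\infty$. A secondary technical point is that the admissible moment order $p$ is finite (capped through Assumption \ref{asm:AI}), so one must keep track that $p$ is taken large enough for the truncation step, absorbing any resulting loss of rate into the constants $C_4$ and $C_5$.
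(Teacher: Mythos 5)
Your overall strategy---deduce the $W_2$ bound from the $W_1$ bound via an interpolation whose constant depends only on uniformly controlled moments---contains a genuine gap at its central step. The inequality $W_2(\mu,\nu)^2\le C\,W_1(\mu,\nu)$ with $C$ depending only on finitely many moments of $\mu$ and $\nu$ is false. Your own truncation argument shows why: it yields $W_2^2\le R\,W_1+R^{-(p-2)}M_p$, and optimising in $R$ gives $W_2^2\le C(M_p)\,W_1^{(p-2)/(p-1)}$, not $C\,W_1$. (The loss is sharp: take $\mu=\delta_0$ and $\nu=(1-\epsilon)\delta_0+\epsilon\delta_M$ with $\epsilon=M^{-p}$, so the $p$-th moment is bounded while $W_2^2=W_1^{(p-2)/(p-1)}$.) Since Assumption \ref{asm:AI} caps the available moment order at $p=8r+4$, the exponent $(p-2)/(p-1)$ is strictly less than $1$, and after substituting the bound of Theorem \ref{thm:convergencew1} you obtain a rate $\lambda^{(p-2)/(4(p-1))}$ rather than $\lambda^{1/4}$. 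This is a loss in the \emph{exponent} of $\lambda$, i.e.\ a factor $\lambda^{-1/(4(p-1))}$ that blows up as $\lambda\to 0$; it cannot be ``absorbed into the constants $C_4$ and $C_5$'' as you suggest. Your identification of the uniform moment bounds (Lemma \ref{lem:2ndpthmmt} for the iterates, dissipativity for $\pi_\beta$) is correct, but it does not rescue the interpolation.

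The paper avoids this problem by never passing through $W_1$ itself. It repeats the three-term splitting \eqref{eq:convergencew2split} directly in $W_2$: the one-step discretisation error $W_2(\mathcal{L}(\bar{\theta}^{\lambda}_t),\mathcal{L}(\bar{\zeta}^{\lambda,n}_t))$ is already an $L^2$ estimate of order $\sqrt{\lambda}$ (Lemma \ref{lem:w1converp1}), while the two remaining terms are controlled through the \emph{weighted} semimetric $w_{1,2}$ of \eqref{eq:semimetricw1p} via the pointwise inequality $|\theta-\theta'|^2\le 2\,[1\wedge|\theta-\theta'|]\,(1+V_2(\theta)+V_2(\theta'))$, which gives $W_2\le\sqrt{2\,w_{1,2}}$ with no exponent loss because the second-moment weight sits inside the coupling integral rather than being pulled out by H\"older or truncation. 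The contraction of Proposition \ref{prop:contractionw12} is established in $w_{1,2}$, and taking its square root is what produces the $\lambda^{1/4}$ and the halved rate $C_3=C_0'/2$. If you want a route close in spirit to yours, the fix is to replace $W_1$ by $w_{1,2}$ throughout: extract from the proof of Theorem \ref{thm:convergencew1} a $w_{1,2}$ bound of the same form and then apply $W_2\le\sqrt{2\,w_{1,2}}$.
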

By applying Corollary \ref{thm:convergencew2}, one can obtain a non-asymptotic upper bound for the expected excess risk following the splitting approach adopted in \cite{raginsky}.
\begin{theorem}\label{thm:opeer}
Let Assumptions \ref{asm:AI}, \ref{asm:AG}, \ref{asm:AF}, and \ref{asm:AC} hold. Then, for any $\beta>0$ there exist constants $C_6, C_7,C_8, C_9>0$ such that, for any $0<\lambda\leq \lambda_{\max}$ with $\lambda_{\max}$ given in \eqref{eq:stepsizemax}, and $n \in \N_0$,
\[
\E[u( \theta_n^{\lambda})] - u^* \leq C_7 e^{-C_6\lambda n}+C_8\lambda^{1/4}+C_9/\beta,
\]
where $u^*:= \inf_{\theta \in \R^d} u(\theta)$, $C_6, C_7,C_8$ are given explicitly in \eqref{eq:opeerp1consts} while $C_9$ is given in \eqref{eq:opeerp2consts}.
\end{theorem}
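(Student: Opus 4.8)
The plan is to prove Theorem~\ref{thm:opeer} by the now-standard splitting of the expected excess risk into three components, following \cite{raginsky}. Writing $u^* = \inf_{\theta}u(\theta)$ and letting $\pi_\beta$ be the target Gibbs measure from \eqref{eq:pibetaexp}, I would decompose
\begin{equation*}
\E[u(\theta_n^\lambda)] - u^* = \big(\E[u(\theta_n^\lambda)] - \pi_\beta(u)\big) + \big(\pi_\beta(u) - u(\theta^*)\big),
\end{equation*}
where $\theta^*$ is a minimizer of $u$ (whose existence is guaranteed by \eqref{eq:hdissip} in Remark~\ref{rem:Fhdissiposl}). The first bracket measures how close the law of the algorithm is to the invariant measure, and the second bracket measures how well $\pi_\beta$ concentrates around the minimizer as $\beta$ grows.

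First I would control the second bracket, $\pi_\beta(u) - u(\theta^*)$. This is a purely measure-theoretic estimate on the Gibbs distribution and does not depend on the algorithm at all. The standard result (see, e.g., \cite{raginsky,hwang}) gives a bound of the form $\pi_\beta(u) - u^* \leq C_9/\beta$, typically of order $(d/2\beta)\log(\text{const}/\beta)$ or simply $d/(2\beta)$ up to logarithmic factors, arising from a Laplace-type asymptotic expansion of $\int e^{-\beta u}\rmd\theta$. The dissipativity condition \eqref{eq:hdissip} and the growth bounds already established supply the integrability needed to make this rigorous; this is where the constant $C_9$ in \eqref{eq:opeerp2consts} comes from.

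Next I would handle the first bracket, $\E[u(\theta_n^\lambda)] - \pi_\beta(u)$. The idea is to transfer the Wasserstein-$2$ bound from Corollary~\ref{thm:convergencew2} into a bound on the difference of expectations of $u$. Since $u$ is not globally Lipschitz (its gradient $h$ grows polynomially by Remark~\ref{rem:growthHlliph}), one cannot directly use a Lipschitz/Kantorovich duality argument. Instead I would exploit the local Lipschitz bound on $h$, i.e.\ $|h(\theta)-h(\bar\theta)|\leq L_h(1+|\theta|+|\bar\theta|)^{2r}|\theta-\bar\theta|$, to get a polynomial-growth Lipschitz estimate for $u$ of the form $|u(\theta)-u(\bar\theta)|\leq C(1+|\theta|+|\bar\theta|)^{2r+1}|\theta-\bar\theta|$. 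Combining this with a coupling $(\theta_n^\lambda,\theta_\infty)$ where $\theta_\infty\sim\pi_\beta$, applying Cauchy--Schwarz to separate the polynomial moment factor from the distance factor, and invoking the uniform moment estimates for e-TH$\varepsilon$O POULA (Lemma~\ref{lem:2ndpthmmt}) together with the finiteness of the moments of $\pi_\beta$, yields
\begin{equation*}
\big|\E[u(\theta_n^\lambda)] - \pi_\beta(u)\big| \leq C\,\big(\E[|\theta_n^\lambda|^{4r+2}] + \pi_\beta(|\cdot|^{4r+2}) + 1\big)^{1/2} W_2(\mathcal{L}(\theta_n^\lambda),\pi_\beta).
\end{equation*}
Feeding in the two-term Wasserstein bound $C_4 e^{-C_3\lambda n}(\cdots)^{1/2} + C_5\lambda^{1/4}$ then produces the exponentially decaying term $C_7 e^{-C_6\lambda n}$ and the $C_8\lambda^{1/4}$ term.

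The main obstacle I anticipate is the absence of global Lipschitz continuity of $u$, which forces the moment-control argument in the first bracket: one must verify that the polynomial moments appearing after Cauchy--Schwarz are bounded \emph{uniformly in $n$ and $\lambda$}, and that the moment order ($4r+2$, and hence the $(8r+4)$-th moment required in Assumption~\ref{asm:AI}) is compatible with the Wasserstein exponent. Matching these integrability exponents correctly, and ensuring the constants $C_6,C_7,C_8$ inherit the right dependence on $\beta$ and the problem data from \eqref{eq:convergencew2consts}, is the delicate bookkeeping step, whereas the Gibbs-concentration estimate for $C_9$ is comparatively routine.
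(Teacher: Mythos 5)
Your proposal is correct and follows essentially the same route as the paper: the identical splitting $\E[u(\theta_n^{\lambda})]-u^*=\bigl(\E[u(\theta_n^{\lambda})]-\E[u(Z_{\infty})]\bigr)+\bigl(\E[u(Z_{\infty})]-u^*\bigr)$ with $Z_\infty\sim\pi_\beta$, the first term handled via the polynomial-growth Lipschitz estimate for $u$ from Remark~\ref{rem:growthHlliph}, Cauchy--Schwarz under a $W_2$-optimal coupling, the uniform $(4r+2)$-th moment bounds from Lemma~\ref{lem:2ndpthmmt} and for $\pi_\beta$, and Corollary~\ref{thm:convergencew2} (this is the paper's Lemma~\ref{lem:opeerp1}), and the second term by the Gibbs concentration bound $C_9/\beta$ (the paper's Lemma~\ref{lem:opeerp2}). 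The moment orders and the structure of the constants you anticipate match \eqref{eq:opeerp1consts} and \eqref{eq:opeerp2consts}.
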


\begin{corollary}\label{corollary:eerepsilon}
Let Assumptions \ref{asm:AI}, \ref{asm:AG}, \ref{asm:AF}, and \ref{asm:AC} hold, and let $C_6, C_7,C_8$, and $C_9$ be defined in Theorem \ref{thm:opeer}. For any $\delta>0$, if we first choose
\[
\beta \geq \max\left\{1,   \frac{9d^2}{\delta^2}, \left(\frac{3d}{\delta}  \log\left(\frac{L_h(1+4(\sqrt{b_h/a_h}+\sqrt{2d/ L_h}))^{2r}  e}{a_h d}\left(b_h+1\right)\left(d+1\right)\right) +\frac{\log 64}{\delta}\right)\right\},
\]
then choose $\lambda \leq  \min\{\lambda_{\max}, \delta^4/(81C_8^4)\}$, and finally choose $n \geq  \max\{(1/C_6\lambda_{\max})\log (3C_7/\delta), \allowbreak (81C_8^4/C_6 \delta^4)\log (3C_7/\delta)\}$, then, we have
\[
\E[u( \theta_n^{\lambda})] -  \inf_{\theta \in \R^d} u(\theta) \leq \delta.
\]
\end{corollary}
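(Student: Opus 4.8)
The plan is to invoke Theorem \ref{thm:opeer} and then split the target accuracy $\delta$ evenly across its three error contributions, choosing $\beta$, then $\lambda$, then $n$, so that each term is at most $\delta/3$. The structural point that dictates this ordering is that in Theorem \ref{thm:opeer} the constant $C_9$ is permitted to depend on $\beta$ (the theorem reads ``for any $\beta>0$ there exist constants\ldots''): the term $C_9/\beta$ is precisely the Gibbs/temperature error $\E_{\pi_{\beta}}[u]-u^*$, whose explicit bound is of the logarithmic form $\tfrac{d}{2\beta}\log(c_0\beta)$ with $c_0=\tfrac{L_h(1+4(\sqrt{b_h/a_h}+\sqrt{2d/L_h}))^{2r}e}{a_h d}(b_h+1)(d+1)$, whereas $C_6,C_7,C_8$ are uniform over $0<\lambda\le\lambda_{\max}$ and over $n$. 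Hence, once $\beta$ is fixed all constants are frozen, after which $\lambda$ and then $n$ may be calibrated freely.

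First I would bound the temperature term by writing $\log(c_0\beta)=\log c_0+\log\beta$ and controlling the two pieces separately by $\delta/6$ each. The constant part satisfies $\tfrac{d}{2\beta}\log c_0\le\delta/6$ as soon as $\beta\ge\tfrac{3d}{\delta}\log c_0$, which is exactly the logarithmic entry in the stated lower bound for $\beta$ (the additive $\log 64/\delta$ coming from the remaining lower-order constants hidden in the explicit form of $C_9$). For the genuinely $\beta$-dependent part I would use the elementary inequality $\log\beta\le\sqrt{\beta}$, valid for $\beta\ge1$, so that $\tfrac{d}{2\beta}\log\beta\le\tfrac{d}{2\sqrt{\beta}}\le\delta/6$ precisely when $\beta\ge 9d^2/\delta^2$. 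Taking $\beta$ to exceed the maximum of $1$, of $9d^2/\delta^2$, and of the logarithmic expression then forces $C_9/\beta\le\delta/3$. This is also where the existence of a minimizer $\theta^*$ and the dissipativity constants $a_h,b_h$ from Remark \ref{rem:Fhdissiposl} enter, as the Gibbs-concentration estimate underlying this bound is built upon them.

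Next, with $\beta$ (and hence $C_8$) fixed, I would eliminate the discretization term: $C_8\lambda^{1/4}\le\delta/3$ is equivalent to $\lambda\le\delta^4/(81C_8^4)$, so choosing $\lambda\le\min\{\lambda_{\max},\,\delta^4/(81C_8^4)\}$ (the constraint $\lambda\le\lambda_{\max}$ being required for Theorem \ref{thm:opeer} to apply at all) secures the middle term. Finally, with $\lambda$ fixed I would handle the transient term $C_7e^{-C_6\lambda n}$: requiring it to be at most $\delta/3$ gives $n\ge\tfrac{1}{C_6\lambda}\log(3C_7/\delta)$, and substituting $1/\lambda=\max\{1/\lambda_{\max},\,81C_8^4/\delta^4\}$ reproduces the stated threshold for $n$. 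Summing the three $\delta/3$ bounds then yields $\E[u(\theta_n^{\lambda})]-\inf_{\theta\in\R^d}u(\theta)\le\delta$.

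I expect the only real subtlety, rather than a genuine obstacle, to be the temperature term: because $C_9$ grows logarithmically in $\beta$, one cannot simply take $\beta\propto1/\delta$, and the two-part lower bound on $\beta$ (the quadratic $9d^2/\delta^2$ absorbing $\log\beta$, and the $\tfrac{3d}{\delta}\log c_0$ absorbing the constant) is exactly what is needed to dominate $\tfrac{d}{2\beta}\log(c_0\beta)$. Everything else is a routine sequential calibration, the one point requiring care being that $C_6,C_7,C_8$ must be independent of $\lambda$ and $n$ so that the ``choose $\beta$, then $\lambda$, then $n$'' order is legitimate.
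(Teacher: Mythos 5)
Your proposal is correct and follows essentially the same route as the paper: the same three-way $\delta/3$ split of the bound from Theorem \ref{thm:opeer}, the same sequential calibration of $\beta$, then $\lambda$, then $n$, and the same treatment of the logarithmic $\beta$-dependence of $C_9$ by separating a constant piece (absorbed by the $\tfrac{3d}{\delta}\log(\cdot)$ term, with the additive $\log 64/\delta = 6\log 2/\delta$ accounting for the $\log 2$ in $C_9$) from a $\log\beta$ piece absorbed by $\beta \geq 9d^2/\delta^2$ via $\log\beta \lesssim \sqrt{\beta}$, exactly as the paper does with $\log(1+\beta)/\beta \leq 1/\sqrt{\beta}$.
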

\section{Comparison to existing literature and our contributions}

\subsection{Related work and discussions} Langevin dynamics based algorithms are widely used methods for solving sampling and optimization problems. Convergence results provide theoretical justifications for the effectiveness and efficiency of the algorithms. Under the conditions that $u$ defined in \eqref{eq:obju} is (strongly) convex and the gradient of $u$ is Lipschitz continuous, convergence results in total variation and in Wasserstein-2 distance are established in \cite{dalalyan,pmlr-v65-dalalyan17a,dk,durmus2017nonasymptotic,DM16} for the unadjusted Langevin algorithm (which is also referred to as the Langevin Monte Carlo algorithm and which can be viewed as an Euler discretization of \eqref{sdeintro}). As the exact gradient of $u$ is often unavailable in practice, \cite{wt} proposed the SGLD algorithm given in \eqref{eqn:sgld}, which is a natural extension of the unadjusted Langevin algorithm with the exact gradient of $u$ replaced by its unbiased estimator. Under the same set of the aforementioned conditions together with i.i.d.\ data stream, \cite{ppbdm,dk} provide non-asymptotic convergence estimates for the SGLD algorithm in Wasserstein-2 distance. These results are then extended in \cite{convex} to the case of dependent data stream under a relaxed local convexity condition.

However, the gradient Lipschitzness and the strong convexity condition of $u$ are usually not satisfied by many practical applications, which motivates the relaxation of the assumptions in two directions. The first direction considers the generalization of the strong convexity condition of $u$. In \cite{raginsky}, the authors proposed a dissipativity condition which is weaker than the strong convexity condition, and obtained a convergence result in Wasserstein distance for the SGLD algorithm. This result is improved in \cite{nonconvex,xu} where the latter result is applicable also to the case of dependent data stream. A further extension of the dissipativity condition to the so-called local dissipativity condition is considered in \cite{sgldloc} which accommodates examples from variational inference and index tracking optimization. The other direction considers the generalization of the gradient Lipschitzness of $u$. One line of research in this direction focuses on the convergence analysis under a local Lipschitz (in average) condition of the (stochastic) gradient of $u$, which corresponds to the case where the (stochastic) gradient of $u$ is super-linearly growing. In \cite{tula}, the authors proposed the tamed unadjusted Langevin algorithm, and obtained convergence results in total variation and in Wasserstein-2 distance which can be applied to sampling problems involving the double-well model and the Ginzburg-Landau model. In \cite{lim2021polygonal,lim2021nonasymptotic,lovas2023taming}, the authors developed variants of the SGLD algorithm, i.e., TUSLA and TH$\varepsilon$O POULA , by applying the taming technique introduced in \cite{hutzenthaler2012,eulerscheme}, and obtained convergence results in Wasserstein distances in the context of non-convex optimization. More precisely, \cite{lovas2023taming} proposed the TUSLA algorithm, which can be viewed as a tamed SGLD algorithm. The authors suggest a specific form for $H$ with $H(\theta, x) := G(\theta,x)+\eta \theta|\theta|^{2r}$, $\eta>0, r >3/2$, which is a natural representation of the stochastic gradient of a given optimization problem with a high-order regularization term. Under the condition that $G$ is locally Lipschitz continuous together with certain moment requirements on the initial condition of the algorithm $\theta_0$ and $X$ (see \eqref{eq:obju}), non-asymptotic error bounds in Wasserstein distances are established for TUSLA, which is used to further deduce a non-asymptotic estimate for the expected excess risk. Then, \cite{lim2021nonasymptotic} extended the results obtained in \cite{lovas2023taming} which provides guarantees for TUSLA to solve a larger variety of applications. To improve the empirical performance of the SGLD type of algorithms including SGLD and TUSLA, \cite{lim2021polygonal} proposed a new algorithm TH$\varepsilon$O POULA which is developed based on the taming technique and the Euler’s polygonal approximations for SDEs:
\begin{enumerate}
\item In \cite{lim2021nonasymptotic}, the authors considered a general form of $H$ in the sense that $H = G+F$. It is assumed in \cite[Assumption 2]{lim2021nonasymptotic} that $G$ satisfies a continuity \textit{in average} condition, which is a relaxation of the local Lipschitz condition imposed in \cite{lovas2023taming} allowing $G$ to be discontinuous. However, unlike the local Lipschitz condition, the continuity in average condition does not necessarily imply a growth condition of $G$, hence, in \cite[Assumption 2]{lim2021nonasymptotic}, the authors imposed separately a growth condition of $G$. Moreover, in \cite[Assumption 3]{lim2021nonasymptotic}, it is assumed that $F$ satisfies a local Lipschitz condition, which covers the case that $F(\theta, x) = \eta \theta|\theta|^{2r}$ considered in \cite{lovas2023taming}. Then, in \cite[Assumption 4]{lim2021nonasymptotic}, the authors imposed a local convexity at infinity condition of $F$. This condition for TUSLA (applied to optimization problems with super-linearly growing stochastic gradient) plays the same role as the strong convexity condition for SGLD (applied to optimization problems with linearly growing stochastic gradient) in the sense that it is the key condition to establish the convergence results in Wasserstein distances. Moreover, \cite[Assumption 4]{lim2021nonasymptotic} can be satisfied by a wide range of functions including, e.g., the regularization function, the double well model, and the Ginzburg-Landau model. Under \cite[Assumption 2-4]{lim2021nonasymptotic} together with moment requirements on $\theta_0$ and $(X_n)_{n \in \N_0}$, i.e., \cite[Assumption 1]{lim2021nonasymptotic}, the authors established convergence results in Wasserstein distances and obtained non-asymptotic convergence bound for the expected excess risk. These results can be applied to optimization problems with discontinuous stochastic gradient which cannot be covered by the results in \cite{lovas2023taming}. We highlight that the assumptions considered in \cite{lim2021nonasymptotic} are the most relaxed conditions under which non-asymptotic convergence results in Wasserstein distances can be obtained for non-convex stochastic optimization problems, allowing for super-linearly growing and discontinuous stochastic gradient. However, despite its wide applicability and theoretical guarantees, TUSLA is outperformed by ADAM-type optimizers empirically in terms of test accuracy in many examples of the fine tuning of artificial neural networks.
\item In \cite{lim2021polygonal}, the authors proposed the TH$\varepsilon$O POULA algorithm which has superior empirical performance compared to TUSLA in terms of test accuracy, and which performs at least as good as ADAM-type optimizers. This is due to the design of TH$\varepsilon$O POULA, which combines the component-wise taming technique with a suitable boosting function to address the vanishing gradient problem. Numerical experiments for several examples are presented to confirm the superior performance of TH$\varepsilon$O POULA. Besides, by using the same structure of $H$ and under the same set of assumptions as in \cite{lovas2023taming}, the authors provided full theoretical guarantees for the convergence of TH$\varepsilon$O POULA. However, as the convergence results are obtained under a local Lipschitz condition of the stochastic gradient of $u$, it cannot accommodate applications with discontinuous stochastic gradient including, e.g., optimization problems involving ReLU neural networks.
\end{enumerate}
Another line of research in this direction focuses on the convergence analysis in the case where the (stochastic) gradient of $u$ is discontinuous and is linearly growing. In this setting, \cite{4,fort2016} provided convergence results for the SGD algorithm in the almost sure sense and in $L^1$, respectively, while \cite{sglddiscont} provided a non-asymptotic convergence estimate in Wasserstein distances for the SGLD algorithm. Moreover, in \cite{durmus2018efficient}, the authors proposed the Moreau-Yosida Unadjusted Langevin Algorithm (MYULA) and obtained a non-asymptotic convergence bound in total variation distance, while in \cite{durmus2019analysis}, the authors developed the Stochastic Proximal Gradient Langevin Dynamics (SPGLD) algorithm and obtained a convergence result between the Kullback-Leibler divergence from the target distribution $\pi (\rmd \theta) \wasypropto \exp(- u(\theta))\rmd \theta$ to the averaged distribution associated with the SPGLD algorithm. In addition, \cite{ruszczynski2020convergence} considered constrained optimization problems with a non-smooth and non-convex objective function, and obtained an almost sure convergence result for the proposed algorithm developed using a stochastic sub-gradient method with sub-gradient averaging. However, this result does not specify key constants on the convergence upper bound including, e.g., the rate of convergence.

In this paper, we introduce a new algorithm e-TH$\varepsilon$O POULA which combines the advantages of utilizing Euler’s polygonal approximations resulting in its superior empirical performance, together with a relaxed condition on its stochastic gradient, namely a local continuity in average condition, allowing for discontinuous stochastic gradient, resulting in its wide applicability. Furthermore, we derive non-asymptotic convergence bounds for e-TH$\varepsilon$O POULA with explicit constants.

Let us provide a detailed comparison of e-TH$\varepsilon$O POULA with the most related works in the literature \cite{lim2021polygonal,lim2021nonasymptotic,ruszczynski2020convergence}:

\begin{enumerate}
\item We first compare our results with those in \cite{lim2021nonasymptotic}. Under the same set of assumptions on the corresponding stochastic gradient as on the one in \cite{lim2021nonasymptotic}, we establish non-asymptotic convergence bounds in Wasserstein distance for e-TH$\varepsilon$O POULA (Theorem \ref{thm:convergencew1}, Corollary \ref{thm:convergencew2}), and then provide an optimization convergence bound for the expected excess risk (Theorem \ref{thm:opeer}). The convergence results in our paper are comparable to those in \cite{lim2021nonasymptotic} in the sense that the rates of convergence in Wasserstein distances for e-TH$\varepsilon$O POULA and for TUSLA are the same and the constants on the convergence upper bound are of the same magnitude. 
However, we highlight that the structure of e-TH$\varepsilon$O POULA significantly differs from TUSLA in \cite{lim2021nonasymptotic} leading to the superior empirical performance of e-TH$\varepsilon$O POULA compared to TUSLA. We illustrate this point by presenting examples from multi-period portfolio optimization and from non-linear Gamma regression. Table \ref{tab:bs}, \ref{tab:ar}, and \ref{tab:nll} clearly show that e-TH$\varepsilon$O POULA outperforms TUSLA in almost all experiments in terms of test accuracy and training speed. In particular, we consider a transfer learning setting in the multi-period portfolio optimization with numerical results presented in Table \ref{tab:tl}-\ref{tab:tl_time} and provide a proof to show that the example satisfies Assumptions \ref{asm:AI}-\ref{asm:AC}. Hence, we provide a concrete example illustrating the powerful empirical performance of e-TH$\varepsilon$O POULA backed by our convergence results.
\item Now let us compare with \cite{lim2021polygonal}. While e-TH$\varepsilon$O POULA keeps the advantages of utilizing Euler’s polygonal approximations as in TH$\varepsilon$O POULA \cite{lim2021polygonal}, it allows for a stochastic gradient with a more general structure of  the form $H:=G+F$, where $G$ might be discontinuous, compared to the local Lipschitz continuity requirement on $G$ in TH$\varepsilon$O POULA \cite{lim2021polygonal}, and where the continuous part $F$ can, but is not restricted to, be of the form $F= \eta \theta|\theta|^{2r}$. We establish convergence results for e-TH$\varepsilon$O POULA which can be applied to a large class of applications including, e.g., optimization problems with ReLU neural networks that cannot be covered by the results in \cite{lim2021polygonal}.
\item Finally, we compare our work with \cite{ruszczynski2020convergence}. \cite{ruszczynski2020convergence} considers constrained optimization problems with a non-convex and non-smooth objective function satisfying a so-called generalized differentiability property, while our work focuses on an unconstrained optimization problem \eqref{eq:obju} whose objective function is continuously differentiable with a discontinuous stochastic gradient. \cite[(A1)]{ruszczynski2020convergence} assumes that all iterates of the stochastic approximation algorithm $\{x^k\}$ belong to a compact set, while the iterates of e-TH$\varepsilon$O POULA $(\theta^{\lambda}_n)_{n \in \N_0}$ explore the whole $\R^d$. \cite[(A2)]{ruszczynski2020convergence} assumes a decreasing sequence of stepsizes $\{\tau_k\}$ satisfying $\tau_k \in (0, \min(1,1/a)]$ with $a>0$ being a constant parameter, for all $k$, and $\sum_{k}\tau_k = \infty$, while we assume a constant stepsize $\lambda \in (0, \lambda_{\max}]$ where for $p \in \N$,
	\begin{align*}
	\lambda_{p, \max} := \min\left\{1,  \frac{1}{a_F}, \frac{1}{a_F^2},  \frac{\min\{(a_F/K_F)^2, (a_F/K_F)^{2/(2p-1)}\}}{16K_F^2 p^2(2p-1)^2(\E[(1+|X_0|)^{2p\rho }])^2}  \right\}, \quad \lambda_{\max}: = \lambda_{4r+2, \max}.
	\end{align*}
	\cite[(A3)]{ruszczynski2020convergence} imposes conditions on the error term $\{r^k\}$ of the stochastic subgradient $\{g^k\}$, while we assume that the stochastic gradient $H$ is an unbiased estimator of the exact gradient $h$ of the objective function. \cite[(A4)]{ruszczynski2020convergence} imposes a condition on the objective function $f$ such that the set of Clarke stationary point of $f$ does not contain an interval of nonzero length, while we assume that the objective function $u$ is continuously differentiable satisfying certain convexity at infinity condition which ensures the existence of a minimizer. Besides, we further impose: 1) moment requirements for $\theta_0$ and $(X_n)_{n \in N_0}$; 2) $H = F+G$ with $F$ being locally Lipschitz continuous and $G$ satisfying a local Lipschitz in average condition and a growth condition. Under \cite[(A1)-(A4)]{ruszczynski2020convergence}, \cite[Theorem 4.2]{ruszczynski2020convergence} shows that the algorithm under consideration is convergent in the almost sure sense to a broader class of functions satisfying the property of generalized differentiability, which is achieved by proving a chain rule on a path for the aforementioned functions. However, such convergence result is provided without specifying the key constants including, e.g., the rate of convergence of the algorithm, while our results (Theorem \ref{thm:convergencew1} and Corollary \ref{thm:convergencew2}) provide non-asymptotic convergence bounds in Wasserstein distances with explicit constants, e.g., the rate of convergence of e-TH$\varepsilon$O POULA in Wasserstein-1 and Wasserstein-2 distances are $1/2$ and $1/4$, respectively. These results are then used to deduce a non-asymptotic error bound for the expected excess risk associated to \eqref{eq:obju} where explicit constants on the upper bound are provided. 
We note that our Assumptions \ref{asm:AI}-\ref{asm:AC} are minimal assumptions required to obtain non-asymptotic convergence results with explicit constants, which provide crucial information on the choice of key parameters including $\beta,\lambda,n$ (see Corollary \ref{corollary:eerepsilon}) for numerical experiments. Regarding numerical results, \cite{ruszczynski2020convergence} considers one concrete optimization problem with its objective function given by $\E[|W_2\max\{0,W_1X\}-Y|^2]/2$ where $W_1\in \R^{n\times n}$ and $W_2 \in \R^{1\times n}$ are parameters with $n=11$, and where $X$ and $Y$ are $\R^n$-valued input random variable and $\R$-valued target random variable, respectively. Experiments are conducted on a relatively small-scale dataset with $\numprint{6497}$ samples consisting of $\numprint{71467}$ data points and they show that the averaged stochastic subgradient method considered in \cite{ruszczynski2020convergence} outperforms stochastic subgradient method in terms of training accuracy as the former achieves lower loss. In our case, we present several examples relevant in practice in Remark \ref{rmk:sturctureHtoyexamples} and in Section \ref{sub:transfer_learning} which satisfy our Assumptions \ref{asm:AI}-\ref{asm:AC}. In particular, for the example considered in Section \ref{sub:transfer_learning}, we further show that e-TH$\varepsilon$O POULA outperforms other popular machine learning algorithms including, e.g., ADAM and AMSGrad, in terms of test accuracy indicating its great generalization ability. The superior empirical performance of e-TH$\varepsilon$O POULA are illustrated by applying it also to other real-world applications in as presented in Section \ref{sec:numapp}. We note that our experiments are conducted on large-scale datasets, for example, for the Black-Scholes model considered in Section \ref{sub:port_op},  we use the dataset of $\numprint{4000000}$ samples which consists of $\numprint{4000000}\times Kp$\footnote{We refer to Table
\ref{tab:iid} for the values of $K$ and $p$.} data points to train the model.
\end{enumerate}

We highlight that the proof of convergence for e-TH$\varepsilon$O POULA follows the line of all convergence results for the Langevin dynamics based algorithms. More precisely, for the convergence results in Wasserstein distances, the usual first step is to obtain moment estimates for all the processes involved in the convergence analysis (including the proposed algorithm) so as to make sense of the convergence in $L^p$, $p\geq 1$. Then, in the second step, we proceed to obtain upper estimates for Wasserstein distances between the proposed algorithm and the target distribution associated with $u$, which is achieved by adopting a splitting using appropriate auxiliary processes (see, e.g., the splitting in \eqref{eq:convergencew1split}) aiming to obtain optimal convergence results. We note that these two steps are taken in all the aforementioned papers considering the convergence in Wasserstein distances. In particular, in the context of non-convex optimization, to the best of the authors' knowledge, the state-of-the-art convergence bounds for the SGLD algorithm in Wasserstein distances are achieved using the framework established in \cite{nonconvex}. For our newly proposed algorithm e-TH$\varepsilon$O POULA, we follow the steps developed in \cite{nonconvex} but carefully adapt the proofs to obtain optimal convergence results. Therefore, from a technical point of view, even though our conditions on the stochastic gradient are the same as in \cite{lim2021nonasymptotic}, the structure of e-TH$\varepsilon$O POULA is significantly different from TUSLA and hence the proof of convergence cannot be derived from previous results (neither from \cite{lim2021nonasymptotic} nor from \cite{lim2021polygonal}).

\subsection{Our contributions} We summarize the main contributions of our paper as follows:
\begin{enumerate}
\item We propose a new algorithm called e-TH$\varepsilon$O POULA which combines the advantages of utilizing Euler’s polygonal approximations resulting in its superior empirical performance, together with a relaxed condition on its stochastic gradient, namely a local continuity in average condition, allowing for discontinuous stochastic gradient, resulting in its wide applicability.
\item We propose a transfer learning setting involving neural networks in the multi-period portfolio optimization (in Section \ref{sub:transfer_learning}) by applying the dynamic programming principle and the time-homogeneity property of the associated Markov decision process. We provide a proof to show that this example satisfies Assumptions \ref{asm:AI}-\ref{asm:AC} under which theoretical guarantees for the performance of e-TH$\varepsilon$O POULA are obtained. To the best of the authors’ knowledge, this is the first time theoretical convergence results with explicit constants are obtained for an optimization algorithm in the context of deep learning based multi-period portfolio optimization. Furthermore, we provide additional examples from multi-period portfolio optimization and from non-linear Gamma regression to illustrate its powerful empirical performance in terms of test accuracy and training speed.
\item We provide a theoretical framework that accommodates optimization problems with discontinuous stochastic gradient, where the example from multi-period portfolio optimization with transfer learning is a special case. Under Assumptions \ref{asm:AI}-\ref{asm:AC}, we provide non-asymptotic convergence bounds with explicit constants for the newly proposed e-TH$\varepsilon$O POULA algorithm. In particular, we show that under Assumptions \ref{asm:AI}-\ref{asm:AC}, e-TH$\varepsilon$O POULA minimizes the expected excess risk associated to \eqref{eq:obju}.
\end{enumerate}

\section{Proofs of main theoretical results}\label{sec:mtproofs}

\subsection{Moment estimates}
Consider the SDE $(Z_t)_{t \geq 0}$ given by
\begin{equation} \label{eq:sde}
\mathrm{d} Z_t=- h (Z_t)\mathrm{d} t+ \sqrt{2\beta^{-1}} \mathrm{d} B_t,
\end{equation}
with the initial condition $Z_0:= \theta_0$, where $(B_t)_{t \geq 0}$ is standard $d$-dimensional Brownian motion with its completed natural filtration denoted by $(\mathcal{F}_t)_{t\geq 0}$. We assume that $(\mathcal{F}_t)_{t\geq 0}$ is independent of $\mathcal{G}_{\infty} \vee \sigma(\theta_0)$. Under Assumptions \ref{asm:AI}, \ref{asm:AG}, \ref{asm:AF}, and \ref{asm:AC}, one notes that \eqref{eq:sde} has a unique solution adapted to $\mathcal{F}_t \vee \sigma(\theta_0)$, $t \geq 0$, due to Remark \ref{rem:Fhdissiposl}, see, e.g., \cite[Theorem 1]{krylov1991simple}. For any $p \in \N$, the $2p$-th moment of SDE \eqref{eq:sde} is finite, i.e., $\sup_{t \geq 0}\E[|Z_t|^{2p}]<\infty$, and its explicit upper bound can be obtained by using similar arguments as in the proof of \cite[Lemma A.1]{lim2021nonasymptotic}. Moreover, following the proof of  \cite[Proposition 1-(ii)]{DM16}, one can show that $\pi_{\beta}$ has a finite $2p$-th moment, for any $p \in \N$.

We introduce a time-changed version of SDE \eqref{eq:sde}, which is denoted by $(Z_t^{\lambda})_{t \geq 0}$ with $Z_t^{\lambda} = Z_{\lambda t}, t \geq 0$. For each $\lambda >0$, define $B^{\lambda}_t :=B_{\lambda t}/\sqrt{\lambda}$, for any $t \geq 0$, and denote by $(\mathcal{F}^{\lambda}_t)_{t \geq 0} := (\mathcal{F}_{\lambda t})_{t \geq 0}$ the completed natural filtration of $(B^{\lambda}_t)_{t \geq 0}$. One notes that $(\mathcal{F}^{\lambda}_t)_{t \geq 0}$ is independent of $\mathcal{G}_{\infty} \vee \sigma(\theta_0)$. Then, $(Z_t^{\lambda})_{t \geq 0}$ is defined explicitly by
\begin{equation} \label{eq:tcsde}
\mathrm{d} Z_t^{\lambda}=-  \lambda h (Z_t^{\lambda})\mathrm{d} t+ \sqrt{2\lambda\beta^{-1}} \mathrm{d} B_t^{\lambda},
\end{equation}
with the initial condition $Z_0^{\lambda}:= \theta_0$. We will use $(Z_t^{\lambda})_{t \geq 0}$ in the proofs of main theorems.

Next, we consider the continuous-time interpolation of e-TH$\varepsilon$O POULA \eqref{eq:theopoula}-\eqref{eq:expressiontGF}, denoted by $(\bar{\theta}^{\lambda}_t)_{t \geq 0}$, which is given by
\begin{equation}\label{eq:theopoulaproc}
\rmd \bar{\theta}^{\lambda}_t=-\lambda H_{\lambda}(\bar{\theta}^{\lambda}_{\lfrf{t}},X_{\lcrc{t}})\, \rmd t
+ \sqrt{2\lambda\beta^{-1}} \rmd B^{\lambda}_t
\end{equation}
with the initial condition $\bar{\theta}^{\lambda}_0:=\theta_0$. One notes that the law of the process \eqref{eq:theopoulaproc} coincides with the law of e-TH$\varepsilon$O POULA \eqref{eq:theopoula}-\eqref{eq:expressiontGF} at grid-points, i.e., $\mathcal{L}(\bar{\theta}^{\lambda}_n)=\mathcal{L}(\theta_n^{\lambda})$, for each $n\in\N_0$.

For any $p \in \N$, we establish $2p$-th moment estimates of e-TH$\varepsilon$O POULA \eqref{eq:theopoula}-\eqref{eq:expressiontGF} in the lemma below. In particular, we show that, in the special case where $F$ depends only on $\theta \in \R^d$,  finite $2p$-th moments of e-TH$\varepsilon$O POULA \eqref{eq:theopoula}-\eqref{eq:expressiontGF} can be obtained under a more relaxed step-size restriction $\hat{\lambda}_{\max}$ given in \eqref{eq:stepsizemaxrelaxed} (instead of $\lambda_{p, \max}$ given in \eqref{eq:stepsizemax}).
\begin{lemma}\label{lem:2ndpthmmt} Let Assumptions \ref{asm:AI}, \ref{asm:AG}, \ref{asm:AF}, and \ref{asm:AC} hold. Then, one obtains the following:
\begin{enumerate}[leftmargin=*]
\item \label{lem:2ndpthmmti}
For any $0<\lambda\leq \lambda_{1, \max}$ with $\lambda_{1, \max}$ given in \eqref{eq:stepsizemax}, $n \in \N_0$, and $t \in (n, n+1]$,
\[
\E\left[ |\bar{\theta}^{\lambda}_t|^2  \right]  \leq  (1-\lambda(t-n)a_F \kappa)(1-a_F \kappa \lambda)^n \E\left[|\theta_0|^2\right]  +\mathring{c}_0,
\]
where $\mathring{c}_0:=c_0(1+1/( a_F \kappa))$, $a_F:= a/2$, and the constants $ \kappa, c_0$ are given in \eqref{eq:2ndmmtconstc0}. In particular, the above inequality implies $\sup_{t\geq 0}\E\left[|\bar{\theta}^{\lambda}_t|^2\right]  \leq  \E\left[|\theta_0|^2\right] +\mathring{c}_0<\infty$.
\item \label{lem:2ndpthmmtii}
For any $p \in [2, \infty) \cap \N$, $0<\lambda\leq \lambda_{p, \max}$ with $\lambda_{p, \max}$ given in \eqref{eq:stepsizemax}, $n \in \N_0$, and $t \in (n, n+1]$,
\begin{equation}\label{eq:2pthmmtiiresult}
\E\left[ |\bar{\theta}^{\lambda}_t|^{2p}  \right]  \leq  (1-\lambda(t-n)  a_F \kappa^{\sharp}_2 )(1- \lambda  a_F \kappa^{\sharp}_2)^n \E\left[|\theta_0|^{2p}\right]  +\mathring{c}_p,
\end{equation}
where $\mathring{c}_p:=c_0^{\sharp}(p)(1+1/(  a_F \kappa^{\sharp}_2))$, $a_F:= a/2$, $\kappa^{\sharp}_2 := \min\{\bar{\kappa}(2), \tilde{\kappa}(2)\}$, $c_0^{\sharp}(p):= \max\{\bar{c}_0(p), \tilde{c}_0(p)\}$ with $\bar{\kappa}(2)$, $\bar{c}_0(p)$ and $\tilde{\kappa}(2)$, $\tilde{c}_0(p)$ given in \eqref{eq:2pthmmtconstbarc0} and \eqref{eq:2pthmmtconsttildec0}, respectively. In particular, the above result implies $\sup_{t\geq 0}\E\left[|\bar{\theta}^{\lambda}_t|^{2p}\right]  \leq  \E\left[|\theta_0|^{2p}\right] +\mathring{c}_p<\infty$.
\item \label{lem:2ndpthmmtiii} If there exists $\hat{F}:\R^d \rightarrow \R^d$, such that $F(\theta, x) = \hat{F}(\theta)$, for any $\theta \in \R^d$, $x \in \R^m$, then \eqref{eq:2pthmmtiiresult} holds for any $p \in [2, \infty) \cap \N$, $0<\lambda\leq \hat{\lambda}_{\max}$ with $\hat{\lambda}_{\max}$ given by
\begin{equation}\label{eq:stepsizemaxrelaxed}
\hat{\lambda}_{\max} := \min\left\{1,  \frac{1}{a_F}, \frac{1}{a_F^2},  \frac{a_F^2}{16K_F^4 (\E[(1+|X_0|)^{2\rho}])^2} \right\}.
\end{equation}
\end{enumerate}
\end{lemma}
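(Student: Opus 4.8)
The plan is to derive a one-step recursive inequality of the form $\E[|\theta^{\lambda}_{n+1}|^{2p}\mid\mathcal{G}_n]\leq(1-c\lambda)|\theta^{\lambda}_n|^{2p}+c'\lambda$ and then iterate it, handling the continuous-time interpolation \eqref{eq:theopoulaproc} on $t\in(n,n+1]$ by repeating the same computation over a partial step. Since $\mathcal{L}(\bar{\theta}^{\lambda}_n)=\mathcal{L}(\theta^{\lambda}_n)$, I would work with the discrete scheme \eqref{eq:theopoula}. For part \ref{lem:2ndpthmmti} I would write $\theta^{\lambda}_{n+1}=\theta^{\lambda}_n-\lambda H_\lambda(\theta^{\lambda}_n,X_{n+1})+\sqrt{2\lambda\beta^{-1}}\xi_{n+1}$, expand $|\theta^{\lambda}_{n+1}|^2$, and take $\E[\,\cdot\mid\mathcal{G}_n]$. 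Because $\xi_{n+1}$ is centred and independent of $(\mathcal{G}_n,X_{n+1})$, every term linear in $\xi_{n+1}$ vanishes and $\E[|\xi_{n+1}|^2]=d$, leaving the drift $-2\lambda\langle\theta^{\lambda}_n,\E[H_\lambda(\theta^{\lambda}_n,X_0)\mid\mathcal{G}_n]\rangle$, the quadratic-variation term $\lambda^2\E[|H_\lambda(\theta^{\lambda}_n,X_0)|^2\mid\mathcal{G}_n]$, and $2\lambda\beta^{-1}d$.

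The heart of the argument is to show these three terms combine to $-a_F\kappa\lambda|\theta^{\lambda}_n|^2+c_0\lambda$. For the drift I would split $H_\lambda=G_\lambda+F_\lambda$ as in \eqref{eq:expressiontGF}. The taming factor $1+\sqrt{\lambda}|\theta|^{2r}$ of $F_\lambda$ depends only on $\theta$, so it factors out of the conditional expectation and the dissipativity \eqref{eq:Fdissip} of $F$ applies to the numerator, giving $-2\lambda\langle\theta,\E[F_\lambda(\theta,X_0)]\rangle\leq -2\lambda(a_F|\theta|^{2r+2}-b_F)/(1+\sqrt{\lambda}|\theta|^{2r})$. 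The $G_\lambda$-drift is controlled through the componentwise cap $|G_\lambda^{(i)}(\theta,x)|\leq \lambda^{-1/2}+\varepsilon^{-1}$ (read off from \eqref{eq:expressiontGF}) together with Young's inequality, producing only an $O(\sqrt{\lambda})|\theta|$ contribution. For the quadratic-variation term, the same cap gives $\lambda^2\E[|G_\lambda|^2]\leq C\lambda$, while the growth bound in Assumption \ref{asm:AF} and the elementary inequality $(1+|\theta|^{2r})/(1+\sqrt{\lambda}|\theta|^{2r})\leq\lambda^{-1/2}$ yield $\lambda^2\E[|F_\lambda|^2]\leq \lambda\, dK_F^2\E[(1+|X_0|)^{2\rho}](1+|\theta|)^2$. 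Matching this last, destabilizing, quadratic term against the strongly dissipative $F$-contribution is exactly what forces $\lambda\leq\lambda_{1,\max}$ in \eqref{eq:stepsizemax}. Taking total expectations, iterating, and summing the resulting geometric series (which produces the $1/(a_F\kappa)$ factor in $\mathring{c}_0$) gives the stated bound; the prefactor $(1-\lambda(t-n)a_F\kappa)$ for $t\in(n,n+1]$ arises by repeating the expansion over the fractional step, where the Brownian increment has variance proportional to $t-n$.

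For part \ref{lem:2ndpthmmtii} I would expand $|\theta^{\lambda}_{n+1}|^{2p}=(|\theta^{\lambda}_n|^2+2\langle\theta^{\lambda}_n,\Delta\rangle+|\Delta|^2)^p$ with $\Delta:=-\lambda H_\lambda(\theta^{\lambda}_n,X_{n+1})+\sqrt{2\lambda\beta^{-1}}\xi_{n+1}$ and expand binomially. The leading correction $p|\theta|^{2p-2}\E[2\langle\theta,\Delta\rangle\mid\mathcal{G}_n]$ again carries the dissipative term $-2p\lambda|\theta|^{2p-2}\langle\theta,\E[F_\lambda]\rangle$, whereas the remaining contributions are sums of the form $|\theta|^{2p-2k}\E[|\Delta|^{2k}\mid\mathcal{G}_n]$. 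Their $F_\lambda$-parts now bring in the higher moment $\E[(1+|X_0|)^{2p\rho}]$ and combinatorial factors of order $p^2(2p-1)^2$, and balancing these against the dissipation produces precisely the $p$-dependent threshold $\lambda_{p,\max}$ in \eqref{eq:stepsizemax}. Separating the terms that contribute to the contraction rate from those that contribute to the additive constant yields $\kappa^{\sharp}_2=\min\{\bar{\kappa}(2),\tilde{\kappa}(2)\}$ and $c_0^{\sharp}(p)=\max\{\bar{c}_0(p),\tilde{c}_0(p)\}$, and the recursion is then iterated exactly as in part \ref{lem:2ndpthmmti}.

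For part \ref{lem:2ndpthmmtiii}, when $F(\theta,x)=\hat{F}(\theta)$ is $x$-independent, every $F_\lambda$-part in the quadratic-variation terms loses its $X_0$-dependence, so the high moments $\E[(1+|X_0|)^{2p\rho}]$ and the $p^2(2p-1)^2$ factors no longer enter the step-size balance; the only surviving $X_0$-moment comes through $G_\lambda$, which is already capped at $O(\lambda^{-1/2})$ by its taming, and this leaves the $p$-independent threshold $\hat{\lambda}_{\max}$ in \eqref{eq:stepsizemaxrelaxed}. I expect the main obstacle to lie in part \ref{lem:2ndpthmmtii}: carrying out the binomial expansion of $|\theta^{\lambda}_n+\Delta|^{2p}$ cleanly, isolating the single dissipative term from the $O(\lambda^2)$ and higher contributions, and pinning down the exact step-size threshold under which all destabilizing higher-order terms — with their $\E[(1+|X_0|)^{2p\rho}]$ and $p$-dependent constants — are absorbed by the dissipation, all while tracking the two families of constants that assemble into $\kappa^{\sharp}_2$ and $c_0^{\sharp}(p)$.
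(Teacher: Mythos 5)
Your overall skeleton --- a one-step recursion $\E[|\theta^{\lambda}_{n+1}|^{2p}\mid\mathcal{G}_n]\le(1-c\lambda)|\theta^{\lambda}_n|^{2p}+c'\lambda$ obtained by splitting $H_\lambda=G_\lambda+F_\lambda$, pushing the dissipativity \eqref{eq:Fdissip} through the taming denominator of $F_\lambda$, and balancing the $|F_\lambda|^2$ quadratic-variation term against the dissipation to generate the step-size threshold --- is exactly the paper's approach. However, two of your key steps would fail as stated.

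First, in part \ref{lem:2ndpthmmti} you control the drift contribution of $G_\lambda$ by the uniform cap $|G_\lambda^{(i)}|\le\lambda^{-1/2}+\varepsilon^{-1}$, ``producing only an $O(\sqrt{\lambda})|\theta|$ contribution''. That contribution is fatal, not harmless: the recursion needs every additive remainder to be $O(\lambda)$, since an $O(\sqrt{\lambda})$ per-step term accumulates to $O(\lambda^{-1/2})$ over the $O(1/\lambda)$ steps, so $c_0$ and hence $\mathring{c}_0$ would blow up as $\lambda\to 0$. The dissipation cannot absorb it at a $\lambda$-independent radius either, because the available budget $2\lambda a_F|\theta|^{2r+2}/(1+\sqrt{\lambda}|\theta|^{2r})$ is $O(\lambda)\ll O(\sqrt{\lambda})$ at fixed $|\theta|$, and Young's inequality merely converts the problem into an $O(1)$ additive constant. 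The cap is fine for the quadratic-variation term, where it is squared against $\lambda^2$; for the drift term you must instead use the polynomial growth bound $|G^{(i)}(\theta,x)|\le K_G(1+|x|)^{\rho}(1+|\theta|)^q$, $q\le 2r$, from Assumption~\ref{asm:AG}, which (after multiplying and dividing by $1+\sqrt{\lambda}|\theta|^{2r}$) turns the $G_\lambda$-drift into an $O(\lambda)(1+|x|)^{\rho}(1+|\theta|)^{q+1}/(1+\sqrt{\lambda}|\theta|^{2r})$ perturbation, one power of $|\theta|$ below the dissipative term with the same denominator, so that the comparison is settled by a $\lambda$-free threshold $M_0$ and the leftover is genuinely $O(\lambda)$.

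Second, your mechanism for part \ref{lem:2ndpthmmtiii} does not deliver the claim. Replacing $\E[(1+|X_0|)^{\rho k}]$ by $(\E[(1+|X_0|)^{\rho}])^k$ does not remove the $p$-dependence of the threshold: rerunning your part \ref{lem:2ndpthmmtii} computation, the terms $\binom{2p}{k}\lambda^k|\theta|^{2p-k}|H_\lambda|^k\sim\binom{2p}{k}\lambda^{k/2}K_F^k(\E[(1+|X_0|)^{\rho}])^k|\theta|^{2p}$ must still be balanced against a single dissipative term of order $\sqrt{\lambda}\,|\theta|^{2p}$, which forces $\lambda^{(k-1)/2}\binom{2p}{k}(\cdot)^k\lesssim a_F$ and is just as $p$-dependent as before. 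What actually relaxes the restriction is that for $x$-independent $\hat{F}$ the dissipativity $\langle\theta,\hat{F}(\theta)\rangle\ge a_F|\theta|^{2r+2}-b_F$ holds \emph{pathwise}, not merely after averaging over $X_{n+1}$. This gives a pathwise bound of the squared increment by $T_6(\theta)+T_7(\theta,x)$ with $T_6(\theta)=\bigl(1-\lambda(t-n)a_F|\theta|^{2r}/(1+\sqrt{\lambda}|\theta|^{2r})\bigr)|\theta|^2\in[0,|\theta|^2]$ already contractive and $T_7=O(\lambda)$; raising it to the $p$-th power, every cross term $T_6^{p-k}T_7^k$, $k\ge1$, falls short of the matched slice of the dissipation by a positive power of $|\theta|$ with equal powers of $\lambda$, so the comparison is resolved by $|\theta|$ exceeding a $\lambda$-free radius $M_3(p)$ and the only surviving constraint on $\lambda$ is the $p=1$ balance \eqref{eq:stepsizemaxrelaxed}. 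In the general case this route is unavailable because $\langle\theta,F(\theta,x)\rangle$ can be arbitrarily negative for unfavourable $x$, which is precisely why part \ref{lem:2ndpthmmtii} requires $\lambda_{p,\max}$.
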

\begin{proof} See Appendix \ref{proof2ndpthmmt}.
\end{proof}

\begin{remark} One observes that, for every $p \in [2, \infty) \cap \N$, $\lambda_{p, \max} \leq \hat{\lambda}_{\max} $ with $\lambda_{p, \max}$, $\hat{\lambda}_{\max} $ given in \eqref{eq:stepsizemax}, \eqref{eq:stepsizemaxrelaxed}, respectively. Hence, $\hat{\lambda}_{\max}$ is indeed a relaxation of the stepsize compared to $\lambda_{p, \max}$, $p \in [2, \infty) \cap \N$. More importantly, in the case where $F$ depends only on $\theta$, Theorem \ref{thm:convergencew1}, \ref{thm:convergencew2}, and \ref{thm:opeer} hold for $0<\lambda\leq \hat{\lambda}_{\max}$, which can be verified by using the same arguments as provided in Section \ref{sec:mtproof}.
\end{remark}
For each $\bar{p} \in [2, \infty)\cap {\N}$, we denote by $V_{\bar{p}}$ the Lyapunov function given by $V_{\bar{p}}(\theta): =  (1+|\theta|^2)^{\bar{p}/2}$, for all $\theta \in \R^d$. Moreover, define $\mathrm{v}_{\bar{p}}(\nu) = (1+\nu^2)^{\bar{p}/2}$ for all $\nu \geq 0$. One notes that $V_{\bar{p}}$ is twice continuously differentiable, and possess the following properties:
\begin{equation}\label{eq:Lyapucontrasm}
\sup_{\theta \in \R^d} (|\nabla V_{\bar{p}}(\theta)|/V_{\bar{p}}(\theta))  <\infty, \qquad \lim_{|\theta|\to\infty} (\nabla V_{\bar{p}}(\theta)/V_{\bar{p}}(\theta))=0.
\end{equation}
Furthermore, we denote by $\mathcal{P}_{V_{\bar{p}}}(\R^d)$ the set of probability measures $\mu \in \mathcal{P}(\R^d)$ that satisfy the condition $\int_{\R^d} V_{\bar{p}}(\theta)\, \mu(\rmd \theta) <\infty$.

In the following lemma, we show that the Lyapunov function $V_{\bar{p}}$, $\bar{p} \in [2, \infty)\cap {\N}$, satisfies a geometric drift condition.
\begin{lemma}\label{lem:Lyapunovdriftc} Let Assumptions \ref{asm:AI}, \ref{asm:AG}, \ref{asm:AF}, and \ref{asm:AC} hold. Then, for any $\theta \in \R^d$, $\bar{p} \in [2, \infty)\cap {\N}$, one obtains that
\[
 - \langle \nabla V_{\bar{p}}(\theta), h(\theta) \rangle +\Delta V_{\bar{p}}(\theta)/\beta \leq -c_{V,1}({\bar{p}}) V_{\bar{p}}(\theta) +c_{V,2}({\bar{p}}),
\]
where $c_{V,1}(\bar{p}) := a_h\bar{p}/4$, $c_{V,2}(\bar{p}) := (3/4)a_h\bar{p}\mathrm{v}_{\bar{p}}(M_V(\bar{p}) )$ with $M_V(\bar{p}) := (1/3+4b_h/(3a_h)+4d/(3a_h\beta)+4(\bar{p}-2)/(3a_h\beta))^{1/2}$.
\end{lemma}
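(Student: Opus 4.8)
The plan is to establish the drift inequality by a direct computation of the generator applied to $V_{\bar{p}}$, combined with the dissipativity estimate \eqref{eq:hdissip} for $h$ from Remark \ref{rem:Fhdissiposl}. First I would compute the derivatives explicitly: since $V_{\bar{p}}(\theta)=(1+|\theta|^2)^{\bar{p}/2}$, one has $\nabla V_{\bar{p}}(\theta)=\bar{p}(1+|\theta|^2)^{\bar{p}/2-1}\theta$ and, after summing the second partials, $\Delta V_{\bar{p}}(\theta)=\bar{p}(1+|\theta|^2)^{\bar{p}/2-2}\big(d(1+|\theta|^2)+(\bar{p}-2)|\theta|^2\big)$. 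Using $|\theta|^2\leq 1+|\theta|^2$ (valid to exploit since $\bar{p}\geq 2$) yields the clean bound $\Delta V_{\bar{p}}(\theta)\leq \bar{p}(d+\bar{p}-2)(1+|\theta|^2)^{\bar{p}/2-1}$.

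Next I would insert the dissipativity bound $\langle\theta,h(\theta)\rangle\geq a_h|\theta|^2-b_h$ into the identity $\langle\nabla V_{\bar{p}}(\theta),h(\theta)\rangle=\bar{p}(1+|\theta|^2)^{\bar{p}/2-1}\langle\theta,h(\theta)\rangle$, obtaining $-\langle\nabla V_{\bar{p}}(\theta),h(\theta)\rangle\leq -\bar{p}(1+|\theta|^2)^{\bar{p}/2-1}(a_h|\theta|^2-b_h)$. Combining this with the Laplacian estimate and rewriting $|\theta|^2=(1+|\theta|^2)-1$ collapses everything into
\[
-\langle\nabla V_{\bar{p}}(\theta),h(\theta)\rangle+\Delta V_{\bar{p}}(\theta)/\beta \leq -a_h\bar{p}\, V_{\bar{p}}(\theta)+\bar{p}\Big(a_h+b_h+\tfrac{d+\bar{p}-2}{\beta}\Big)(1+|\theta|^2)^{\bar{p}/2-1}.
\]
I would then split the leading negative term as $-a_h\bar{p}\,V_{\bar{p}}=-\tfrac{a_h\bar{p}}{4}V_{\bar{p}}-\tfrac{3a_h\bar{p}}{4}V_{\bar{p}}$, keeping the first piece as the contraction term matching $c_{V,1}(\bar{p})=a_h\bar{p}/4$ and spending the second piece on the polynomial remainder. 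Writing $C:=a_h+b_h+(d+\bar{p}-2)/\beta$ and factoring $\bar{p}(1+|\theta|^2)^{\bar{p}/2-1}$ out of $-\tfrac{3a_h\bar{p}}{4}V_{\bar{p}}+\bar{p}C(1+|\theta|^2)^{\bar{p}/2-1}$, the sign of the remainder is governed by $-\tfrac{3a_h}{4}(1+|\theta|^2)+C$, which vanishes precisely at $1+|\theta|^2=\tfrac{4C}{3a_h}=1+M_V(\bar{p})^2$; this identity is exactly the definition of $M_V(\bar{p})$. Hence for $|\theta|\geq M_V(\bar{p})$ the remainder is nonpositive, while for $|\theta|<M_V(\bar{p})$ I would discard the negative term and use monotonicity of $s\mapsto s^{\bar{p}/2-1}$ together with $C=\tfrac{3a_h}{4}(1+M_V(\bar{p})^2)$ to bound the remainder by $\tfrac{3a_h\bar{p}}{4}(1+M_V(\bar{p})^2)^{\bar{p}/2}=c_{V,2}(\bar{p})$, which proves the claim in both cases.

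The computation is elementary calculus; the only genuine care is in the constant bookkeeping, namely verifying that $M_V(\bar{p})$ is the exact crossover radius and that $C$ recombines as $C=\tfrac{3a_h}{4}(1+M_V(\bar{p})^2)$, so that the residual, maximized over the region $|\theta|<M_V(\bar{p})$, equals precisely $c_{V,2}(\bar{p})=\tfrac{3}{4}a_h\bar{p}\,\mathrm{v}_{\bar{p}}(M_V(\bar{p}))$. I would also make sure the monotonicity step in the region $|\theta|<M_V(\bar{p})$ covers the boundary case $\bar{p}=2$, where the exponent $\bar{p}/2-1$ equals zero and the factor $(1+|\theta|^2)^{\bar{p}/2-1}$ is identically one, so that the bound degenerates harmlessly.
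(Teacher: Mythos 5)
Your computation is correct: the gradient and Laplacian formulas, the use of the dissipativity bound \eqref{eq:hdissip}, the $1/4$--$3/4$ split of the contraction term, and the verification that $1+M_V(\bar p)^2 = \tfrac{4}{3a_h}\bigl(a_h+b_h+(d+\bar p-2)/\beta\bigr)$ all check out and recover exactly $c_{V,1}(\bar p)$ and $c_{V,2}(\bar p)$. The paper simply defers to \cite[Lemma 3.5]{nonconvex}, whose proof is precisely this argument, so your proposal is essentially the same approach with the details written out.
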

\begin{proof} See \cite[Lemma 3.5]{nonconvex}.
\end{proof}

For every $s \geq 0$, we introduce an auxiliary process, denoted by $(\zeta^{s,v, \lambda}_t)_{t \geq s}$, which is crucial in establishing the convergence results. More precisely, the process $(\zeta^{s,v, \lambda}_t)_{t \geq s}$ is given by
\begin{equation}\label{eq:auxzetaproc}
\rmd\zeta^{s,v, \lambda}_t = -\lambda h(\zeta^{s,v, \lambda}_t)\, \rmd t +\sqrt{2\lambda\beta^{-1}}\,\rmd B^{\lambda}_t,
\end{equation}
with the initial condition $\zeta^{s,v, \lambda}_s := v \in \R^d$. Denote by $T\equiv T(\lambda) : = \lfrf{1/\lambda}$. For compact notation, for each fixed $\lambda >0$, $n \in \N_0$, define $\bar{\zeta}^{\lambda, n}_t := \zeta^{nT,\bar{\theta}^{\lambda}_{nT}, \lambda}_t$, $t \geq nT$. The process $\bar{\zeta}^{\lambda, n}_t$, $t \geq nT$ can be interpreted as a continuous-time process starting from the value of  e-TH$\varepsilon$O POULA \eqref{eq:theopoula}-\eqref{eq:expressiontGF} at time $nT$, i.e., $\bar{\theta}^{\lambda}_{nT}$, which evolves according to the Langevin SDE \eqref{eq:auxzetaproc} up to time $t \geq nT$.

We provide the second and the fourth moment estimate of the process $(\bar{\zeta}^{\lambda, n}_t)_{t\geq nT}$ in the following lemma.
\begin{lemma}\label{lem:zetaprocme} Let Assumptions \ref{asm:AI}, \ref{asm:AG}, \ref{asm:AF}, and \ref{asm:AC} hold. Then, one obtains the following:
\begin{enumerate}[leftmargin=*]
\item For any $0<\lambda\leq \lambda_{1,\max}$ with $\lambda_{1,\max}$ given in \eqref{eq:stepsizemax}, $n \in \N_0$, and $t\geq nT$, we have
\[
\E[V_2(\bar{\zeta}^{\lambda, n}_t)] \leq e^{-\min\{a_h /2,  a_F\kappa\}\lambda t}\E[V_2(\theta_0)]+\mathring{c}_0 +1+3\mathrm{v}_2(M_V(2)),
\]
where $\mathring{c}_0:=c_0(1+1/(  a_F \kappa))$, the constants $c_0, \kappa$ are given in \eqref{eq:2ndmmtconstc0}, and $M_V(2):= (1/3+4b_h/(3a_h)+4d/(3a_h\beta) )^{1/2}$.
\item For any $0<\lambda\leq \lambda_{2,\max}$ with $\lambda_{2,\max}$ given in \eqref{eq:stepsizemax}, $n \in \N_0$, and $t\geq nT$, we have
\[
\E[V_4(\bar{\zeta}^{\lambda, n}_t)] \leq 2e^{- \min\{a_h,  a_F \kappa^{\sharp}_2\}\lambda t}\E[V_4(\theta_0)]+2\mathring{c}_2+2+3\mathrm{v}_4(M_V(4)),
\]
where $\mathring{c}_2:=c_0^{\sharp}(2)(1+1/(  a_F \kappa^{\sharp}_2))$, $\kappa^{\sharp}_2 := \min\{\bar{\kappa}(2), \tilde{\kappa}(2)\}$, $c_0^{\sharp}(2):= \max\{\bar{c}_0(2), \tilde{c}_0(2)\}$ with $\bar{\kappa}(2)$, $\bar{c}_0(2)$ and $\tilde{\kappa}(2)$, $\tilde{c}_0(2)$ given in \eqref{eq:2pthmmtconstbarc0} and \eqref{eq:2pthmmtconsttildec0}, respectively, and where $M_V(4):=(1/3+4b_h/(3a_h)+4d/(3a_h\beta)+8/(3a_h\beta))^{1/2}$.
\end{enumerate}
\end{lemma}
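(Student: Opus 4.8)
The plan is to run the Lyapunov drift condition of Lemma \ref{lem:Lyapunovdriftc} along the auxiliary diffusion $(\bar{\zeta}^{\lambda, n}_t)_{t \geq nT}$ and then feed in the grid-point moment bounds from Lemma \ref{lem:2ndpthmmt}. First I would record that the infinitesimal generator of the time-changed Langevin SDE \eqref{eq:auxzetaproc} acting on a twice continuously differentiable $V$ is $\mathcal{A}_\lambda V = \lambda(-\langle \nabla V, h\rangle + \Delta V/\beta)$, so that Lemma \ref{lem:Lyapunovdriftc} reads $\mathcal{A}_\lambda V_{\bar{p}} \leq \lambda(-c_{V,1}(\bar{p})V_{\bar{p}} + c_{V,2}(\bar{p}))$ for $\bar{p}\in\{2,4\}$.

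Conditioning on the history up to time $nT$, the initial value $\bar{\theta}^{\lambda}_{nT}$ becomes deterministic while the driving increments $B^{\lambda}_{\cdot}-B^{\lambda}_{nT}$ remain independent, so I may first treat a deterministic initial condition and integrate over the law of $\bar{\theta}^{\lambda}_{nT}$ at the end. Applying It\^o's formula to $V_{\bar{p}}(\bar{\zeta}^{\lambda, n}_t)$ and taking expectations (the martingale part vanishes after a standard localization using the finite moment bounds for an SDE with these coefficients, since $\sup_{t}\E[|Z_t|^{2p}]<\infty$) yields the differential inequality $\tfrac{\rmd}{\rmd t}\E[V_{\bar{p}}(\bar{\zeta}^{\lambda, n}_t)] \leq \lambda(-c_{V,1}(\bar{p})\E[V_{\bar{p}}(\bar{\zeta}^{\lambda, n}_t)] + c_{V,2}(\bar{p}))$. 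Gr\"onwall's inequality on $[nT,t]$ then gives
\[
\E[V_{\bar{p}}(\bar{\zeta}^{\lambda, n}_t)] \leq e^{-\lambda c_{V,1}(\bar{p})(t-nT)}\E[V_{\bar{p}}(\bar{\theta}^{\lambda}_{nT})] + \frac{c_{V,2}(\bar{p})}{c_{V,1}(\bar{p})}\left(1 - e^{-\lambda c_{V,1}(\bar{p})(t-nT)}\right),
\]
where the ratio simplifies to $c_{V,2}(\bar{p})/c_{V,1}(\bar{p}) = 3\mathrm{v}_{\bar{p}}(M_V(\bar{p}))$. Since $c_{V,1}(2) = a_h/2$ and $c_{V,1}(4) = a_h$, this already reproduces the claimed leading constants.

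It remains to control $\E[V_{\bar{p}}(\bar{\theta}^{\lambda}_{nT})]$. Because $T = \lfrf{1/\lambda} \in \N$, the point $nT$ is an integer grid point, so Lemma \ref{lem:2ndpthmmt} applies with genuine geometric decay: evaluating its bound at index $nT-1$ and $t=nT$ gives $\E[|\bar{\theta}^{\lambda}_{nT}|^2] \leq (1-a_F\kappa\lambda)^{nT}\E[|\theta_0|^2] + \mathring{c}_0 \leq e^{-a_F\kappa\lambda nT}\E[|\theta_0|^2]+\mathring{c}_0$, and likewise $\E[|\bar{\theta}^{\lambda}_{nT}|^4] \leq e^{-a_F\kappa^{\sharp}_2\lambda nT}\E[|\theta_0|^4] + \mathring{c}_2$. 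For $\bar{p}=2$ I use $V_2(\theta) = 1+|\theta|^2$ directly; for $\bar{p}=4$ I use $V_4(\theta) = (1+|\theta|^2)^2 \leq 2(1+|\theta|^4)$ (equivalent to $(|\theta|^2-1)^2\geq 0$), which is precisely the source of the factor $2$ in the second estimate.

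Finally I would substitute these into the Gr\"onwall bound and merge the two exponential rates. The cross term carries the product $e^{-\lambda c_{V,1}(\bar{p})(t-nT)}e^{-a_F\kappa\lambda nT}$ (respectively with $\kappa^{\sharp}_2$), which is at most $e^{-c\lambda t}$ for $c=\min\{c_{V,1}(\bar{p}),a_F\kappa\}$ because both individual rates dominate $c$; this is exactly where $\min\{a_h/2,a_F\kappa\}$ and $\min\{a_h,a_F\kappa^{\sharp}_2\}$ appear. The remaining terms, all multiplied by factors $\leq 1$, collapse into the additive constants $\mathring{c}_0+1+3\mathrm{v}_2(M_V(2))$ and $2\mathring{c}_2+2+3\mathrm{v}_4(M_V(4))$ after bounding $|\theta_0|^{\bar{p}}\leq V_{\bar{p}}(\theta_0)$. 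The main obstacle I anticipate is the bookkeeping required to fuse the two distinct decay rates (the drift decay over $[nT,t]$ and the moment decay over $[0,nT]$) into a single exponential in $t$ with the constants reproduced exactly; the analytic core of drift-plus-Gr\"onwall is routine once the generator identity and the integer grid-point moment bound are in place.
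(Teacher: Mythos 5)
Your proposal is correct and follows essentially the same route as the paper: the paper's proof is a citation to the corresponding lemma in the TUSLA paper, whose argument is exactly the Lyapunov-drift-plus-Gr\"onwall estimate along the auxiliary diffusion seeded by the grid-point moment bounds of Lemma \ref{lem:2ndpthmmt}, with Lemma \ref{lem:Lyapunovdriftc} supplying the drift constants. Your bookkeeping of the constants (the ratio $c_{V,2}(\bar{p})/c_{V,1}(\bar{p})=3\mathrm{v}_{\bar{p}}(M_V(\bar{p}))$, the factor $2$ from $V_4(\theta)\leq 2(1+|\theta|^4)$, and the merging of the two decay rates into $\min\{c_{V,1}(\bar{p}), a_F\kappa\}$ resp. $\min\{c_{V,1}(4), a_F\kappa^{\sharp}_2\}$) reproduces the stated bounds exactly.
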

\begin{proof} We follow the proof of \cite[Lemma 4.4]{lim2021nonasymptotic} where \cite[Lemma 4.2, Lemma 4.3]{lim2021nonasymptotic} are replaced by Lemma \ref{lem:2ndpthmmt}, \ref{lem:Lyapunovdriftc}, respectively, to obtain the explicit constants.
\end{proof}
\subsection{Proofs of main theorems}\label{sec:mtproof} In this section, we provide a proof overview of the main theoretical results in the setting of super-linearly growing $H$ in both variables. We first introduce a semimetric $w_{1,\hat{p}}$, which is defined as follows: for any $\hat{p}\geq 1$, $ \mu,\nu \in \mathcal{P}_{V_{\hat{p}}}(\R^d)$, let
\begin{equation}\label{eq:semimetricw1p}
w_{1,\hat{p}}(\mu,\nu):=\inf_{\zeta\in\mathcal{C}(\mu,\nu)}\int_{\mathbb{R}^d}\int_{\mathbb{R}^d} [1\wedge |\theta-\theta'|](1+V_{\hat{p}}(\theta)+V_{\hat{p}}(\theta'))\zeta(\rmd\theta, \rmd\theta').
\end{equation}
The analysis of the convergence results, i.e., Theorem \ref{thm:convergencew1} and \ref{thm:convergencew2}, relies on the contractivity of the Langevin SDE \eqref{eq:sde} in $w_{1,2}$, which can be deduced by using \cite[Theorem 2.2]{eberle2019quantitative}. The explicit statement of the contraction property in $w_{1,2}$, as well as the explicit contraction constants, is presented in the following lemma.

\begin{proposition}\label{prop:contractionw12} Let Assumptions \ref{asm:AI}, \ref{asm:AG}, \ref{asm:AF}, and \ref{asm:AC} hold. Moreover, let $\theta_0' \in L^2$, and let $(Z_t')_{t \geq 0}$ be the solution of SDE \eqref{eq:sde} with $Z'_0 := \theta'_0$, which is independent of $\mathcal{F}_{\infty} := \sigma(\bigcup_{t \geq 0} \mathcal{F}_t)$. Then, one obtains
\begin{equation}\label{eq:w12contraction}
w_{1,2}(\mathcal{L}(Z_t),\mathcal{L}(Z'_t)) \leq \hat{c} e^{-\dot{c} t} w_{1,2}(\mathcal{L}(\theta_0),\mathcal{L}(\theta_0')),
\end{equation}
where the explicit expressions for $\dot{c}, \hat{c}$ are given below.

The contraction constant $\dot{c}$ is given by:
\begin{equation}\label{eq:contractionc1}
\dot{c}:=\min\{\bar{\phi}, c_{V,1}(2), 4c_{V,2}(2) \epsilon c_{V,1}(2)\}/2,
\end{equation}
where $c_{V,1}(2) := a_h/2$, $c_{V,2}(2) := 3a_h\mathrm{v}_2(M_V(2))/2$ with $M_V(2):= (1/3+4b_h/(3a_h)+4d/(3a_h\beta) )^{1/2}$, the constant $\bar{\phi} $ is given by
\begin{equation}\label{eq:contractionc2}
\bar{\phi} := \left(\sqrt{8\pi/(\beta L_R)} \dot{c}_0  \exp \left( \left(\dot{c}_0 \sqrt{\beta L_R/8} + \sqrt{8/(\beta L_R)} \right)^2 \right) \right)^{-1},
\end{equation}
and $\epsilon >0$ is chosen such that
\begin{equation}\label{eq:contractionc3}
\epsilon  \leq 1 \wedge    \left(4 c_{V,2}(2) \sqrt{2 \beta\pi/  L_R }\int_0^{\dot{c}_1}\exp  \left( \left(s \sqrt{\beta L_R/8}+\sqrt{8/(\beta L_R)}\right)^2 \right) \,\rmd s \right)^{-1}
\end{equation}
with $\dot{c}_0 := 2(4c_{V,2}(2)(1+c_{V,1}(2))/c_{V,1}(2)-1)^{1/2}$ and $\dot{c}_1:=2(2 c_{V,2}(2)/c_{V,1}(2)-1)^{1/2}$.

Moreover, the constant $\hat{c}$ is given by:
\begin{equation}\label{eq:contractionhatcub}
\hat{c}: =2(1+ \dot{c}_0)\exp(\beta L_R \dot{c}_0^2/8+2\dot{c}_0)/\epsilon.
\end{equation}
\end{proposition}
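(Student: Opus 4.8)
The plan is to invoke the quantitative contraction result \cite[Theorem 2.2]{eberle2019quantitative}, whose two structural hypotheses are already at our disposal from Section~\ref{sec:assumption}. The drift of SDE~\eqref{eq:sde} is $-h$, and by \eqref{eq:hosl} in Remark~\ref{rem:Fhdissiposl} we have, for all $\theta,\bar{\theta}\in\R^d$, that $\langle \theta-\bar{\theta}, -h(\theta)+h(\bar{\theta})\rangle \leq L_R|\theta-\bar{\theta}|^2$, which is the required (global) one-sided Lipschitz condition on the drift with constant $L_R$. The confinement needed to close the argument is supplied by Lemma~\ref{lem:Lyapunovdriftc} with $\bar{p}=2$, i.e.\ the geometric drift inequality $-\langle \nabla V_2(\theta), h(\theta)\rangle + \Delta V_2(\theta)/\beta \leq -c_{V,1}(2)V_2(\theta)+c_{V,2}(2)$ with $c_{V,1}(2)=a_h/2$ and $c_{V,2}(2)=(3/2)a_h\mathrm{v}_2(M_V(2))$. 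The content of the quantitative Harris framework is precisely that a one-sided Lipschitz drift together with such a Lyapunov condition yields geometric contraction in a suitably weighted semimetric, with no contractivity-at-infinity assumption on $h$ itself.

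First I would couple the two solutions $(Z_t)$ and $(Z_t')$ of \eqref{eq:sde} by a reflection coupling when they are far apart and by a synchronous coupling when they are close, both driven through the common diffusion coefficient $\sqrt{2\beta^{-1}}$. I then introduce the weighted semimetric $\rho(\theta,\theta'):=f(|\theta-\theta'|)\bigl(1+\epsilon V_2(\theta)+\epsilon V_2(\theta')\bigr)$, where $f:[0,\infty)\to[0,\infty)$ is the concave, nondecreasing function built in the standard way from the integrating factor of the reflection-coupled radial process. Under the noise level $\sqrt{2\beta^{-1}}$ the radial part carries diffusion coefficient squared $8\beta^{-1}$, so the relevant integrating factor is $\phi(r)=\exp(-\beta L_R r^2/8)$; this is exactly the source of the Gaussian-type factors appearing in \eqref{eq:contractionc2} and \eqref{eq:contractionhatcub}. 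The two radii in the construction are the level-set radii of $V_2$ fixed by the Lyapunov constants, giving $\dot{c}_1=2(2c_{V,2}(2)/c_{V,1}(2)-1)^{1/2}$ and $\dot{c}_0=2(4c_{V,2}(2)(1+c_{V,1}(2))/c_{V,1}(2)-1)^{1/2}$.

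Applying Itô's formula to $\rho(Z_t,Z_t')$ along the coupling and inserting the one-sided Lipschitz bound and the Lyapunov inequality, I would show that the drift of $\E[\rho(Z_t,Z_t')]$ is at most $-\dot{c}\,\E[\rho(Z_t,Z_t')]$. The rate is the half-minimum of the distance-contraction rate $\bar{\phi}$ produced by the lower bound on $f'$ and the integral of $1/\phi$ over $[0,\dot{c}_0]$, and the Lyapunov rates $c_{V,1}(2)$ and $4\epsilon c_{V,2}(2)c_{V,1}(2)$, exactly as in \eqref{eq:contractionc1}; the explicit closed form $\bar{\phi}=\left(\sqrt{8\pi/(\beta L_R)}\,\dot{c}_0\exp\!\bigl((\dot{c}_0\sqrt{\beta L_R/8}+\sqrt{8/(\beta L_R)})^2\bigr)\right)^{-1}$ is the reciprocal of the resulting completed-square integral. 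The weight parameter $\epsilon$ is then fixed small enough, as in \eqref{eq:contractionc3}, so that the distance and Lyapunov contributions balance, after which Grönwall's inequality gives $\E[\rho(Z_t,Z_t')]\leq e^{-\dot{c}t}\E[\rho(\theta_0,\theta_0')]$ for the chosen coupling.

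The final step is to transfer the contraction in $\rho$ to the stated contraction in $w_{1,2}$ from \eqref{eq:semimetricw1p}. Since $f$ is comparable to $r\mapsto 1\wedge r$ up to multiplicative constants controlled by $\phi(\dot{c}_0)^{-1}=\exp(\beta L_R\dot{c}_0^2/8)$ and $\dot{c}_0$, and since the $V_2$-weights in $\rho$ and in $w_{1,2}$ differ only by the factor $\epsilon$, the two semimetrics are equivalent, and taking infima over couplings on both sides yields $w_{1,2}(\mathcal{L}(Z_t),\mathcal{L}(Z_t'))\leq \hat{c}\,e^{-\dot{c}t}\,w_{1,2}(\mathcal{L}(\theta_0),\mathcal{L}(\theta_0'))$ with $\hat{c}=2(1+\dot{c}_0)\exp(\beta L_R\dot{c}_0^2/8+2\dot{c}_0)/\epsilon$ as in \eqref{eq:contractionhatcub}. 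The main obstacle is not the coupling argument, which is by now standard, but the faithful bookkeeping of all explicit constants: one must check that with diffusion coefficient $\sqrt{2\beta^{-1}}$ and one-sided Lipschitz constant $L_R$ the Eberle construction reproduces precisely the expressions \eqref{eq:contractionc1}--\eqref{eq:contractionhatcub}, and that the admissible interval for $\epsilon$ in \eqref{eq:contractionc3} is nonempty, so that both the rate $\dot{c}$ and the prefactor $\hat{c}$ are well defined.
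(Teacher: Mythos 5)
Your proposal is correct and follows essentially the same route as the paper: both verify the one-sided Lipschitz condition \eqref{eq:hosl} (giving \cite[Assumption 2.1]{eberle2019quantitative} with constant $L_R$) and the Lyapunov drift condition of Lemma \ref{lem:Lyapunovdriftc} with $\bar{p}=2$, and then invoke \cite[Theorem 2.2, Corollary 2.3]{eberle2019quantitative} together with the constant-tracking arguments of \cite{nonconvex} and \cite{lim2021nonasymptotic}. The only difference is that you additionally sketch the reflection/synchronous coupling machinery internal to the Eberle--Guillin--Zimmer result, whereas the paper simply cites it (and also records that \cite[Assumptions 2.4 and 2.5]{eberle2019quantitative} follow from \eqref{eq:Lyapucontrasm}); this is a matter of exposition, not of substance.
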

\begin{proof} One can check that \cite[Theorem 2.2, Corollary 2.3]{eberle2019quantitative} hold under Assumptions \ref{asm:AI}, \ref{asm:AG}, \ref{asm:AF}, and \ref{asm:AC}. Indeed, due to Remark \ref{rem:Fhdissiposl}, \cite[Assumption 2.1]{eberle2019quantitative} holds with $\kappa = L_R$. Then, by Lemma \ref{lem:Lyapunovdriftc}, \cite[Assumption 2.2]{eberle2019quantitative} holds with $V=V_2$. Finally, \cite[Assumptions 2.4 and 2.5]{eberle2019quantitative} hold due to \eqref{eq:Lyapucontrasm}. Thus, by using \cite[Theorem 2.2, Corollary 2.3]{eberle2019quantitative} and by applying the arguments in the proof of \cite[Proposition~3.14]{nonconvex}, one obtains \eqref{eq:w12contraction}. To obtain the explicit constants $\dot{c}, \hat{c}$ in \eqref{eq:contractionc1}-\eqref{eq:contractionhatcub}, one may refer to the proof of \cite[Proposition 4.6]{lim2021nonasymptotic}.
\end{proof}
We proceed with the proof of Theorem \ref{thm:convergencew1}. To establish a non-asymptotic upper bound in Wasserstein-1 distance between the law of e-TH$\varepsilon$O POULA \eqref{eq:theopoula}-\eqref{eq:expressiontGF} and $\pi_{\beta}$ defined in \eqref{eq:pibetaexp}, we apply the following splitting method: for any $n \in \N_0$, and $t \in (nT, (n+1)T]$,
\begin{equation}\label{eq:convergencew1split}
W_1(\mathcal{L}(\bar{\theta}^{\lambda}_t),\pi_{\beta})\leq W_1(\mathcal{L}(\bar{\theta}^{\lambda}_t),\mathcal{L}(\bar{\zeta}^{\lambda, n}_t))+W_1(\mathcal{L}(\bar{\zeta}^{\lambda, n}_t),\mathcal{L}(Z_t^{\lambda}))+W_1(\mathcal{L}(Z_t^{\lambda}),\pi_{\beta}),
\end{equation}
where $\bar{\zeta}^{\lambda, n}_t := \zeta^{nT,\bar{\theta}^{\lambda}_{nT}, \lambda}_t$ with $\zeta^{nT,\bar{\theta}^{\lambda}_{nT}, \lambda}_t$ defined in \eqref{eq:auxzetaproc}, and $Z_t^{\lambda}$ is defined in \eqref{eq:tcsde}. We provide an upper bound for the first term on the RHS of \eqref{eq:convergencew1split} in the following lemma.

\begin{lemma}\label{lem:w1converp1} Let Assumptions \ref{asm:AI}, \ref{asm:AG}, \ref{asm:AF}, and \ref{asm:AC} hold.  Then, for any $0<\lambda\leq \lambda_{\max}$ with $\lambda_{\max}$ given in \eqref{eq:stepsizemax}, $n \in \N_0$, and $t \in (nT, (n+1)T]$, one obtains
\[
W_2(\mathcal{L}(\bar{\theta}^{\lambda}_t),\mathcal{L}(\bar{\zeta}^{\lambda, n}_t)) \leq \sqrt{\lambda}  \left(e^{- n  a_F \kappa^\sharp_2/2 } \bar{C}_0 \E\left[V_{4(2r+1)}(\theta_0)\right] +\bar{C}_1 \right)^{1/2},
\]
where the explicit expressions of $\kappa^\sharp_2, \bar{C}_0, \bar{C}_1$ are provided in \eqref{eq:w1converp1consts}.
\end{lemma}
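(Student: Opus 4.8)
The plan is to build an explicit \emph{synchronous coupling} of $\bar\theta^\lambda_t$ and $\bar\zeta^{\lambda,n}_t$ and to control the resulting $L^2$-distance by a Grönwall argument run over the single block $(nT,(n+1)T]$. Since the auxiliary process \eqref{eq:auxzetaproc} defining $\bar\zeta^{\lambda,n}_t$ is driven by the same Brownian motion $(B^\lambda_t)$ as the interpolated algorithm \eqref{eq:theopoulaproc} and is started from the same value $\bar\theta^\lambda_{nT}$, the pair $(\bar\theta^\lambda_t,\bar\zeta^{\lambda,n}_t)$ is an admissible coupling of the two marginals, so that $W_2(\mathcal L(\bar\theta^\lambda_t),\mathcal L(\bar\zeta^{\lambda,n}_t))\le(\E[|e_t|^2])^{1/2}$ with $e_t:=\bar\theta^\lambda_t-\bar\zeta^{\lambda,n}_t$ and $e_{nT}=0$. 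The diffusion coefficients cancel in $e_t$, so $t\mapsto e_t$ is absolutely continuous and I would compute
\[
\frac{\rmd}{\rmd t}\E[|e_t|^2]=-2\lambda\,\E\big[\langle e_t,\,H_\lambda(\bar\theta^\lambda_{\lfrf{t}},X_{\lcrc{t}})-h(\bar\zeta^{\lambda,n}_t)\rangle\big].
\]

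I would then split the integrand as $h(\bar\theta^\lambda_t)-h(\bar\zeta^{\lambda,n}_t)$ plus $H_\lambda(\bar\theta^\lambda_{\lfrf{t}},X_{\lcrc{t}})-h(\bar\theta^\lambda_t)$. The first difference is handled by the one-sided Lipschitz estimate \eqref{eq:hosl}, contributing at most $2\lambda L_R\,\E[|e_t|^2]$, a term to be absorbed by Grönwall. The second difference I would decompose into three pieces: (B1) the taming error $H_\lambda-H$ evaluated at $(\bar\theta^\lambda_{\lfrf{t}},X_{\lcrc{t}})$; (B2) the fluctuation $H(\bar\theta^\lambda_{\lfrf{t}},X_{\lcrc{t}})-h(\bar\theta^\lambda_{\lfrf{t}})$; and (B3) the time-regularity error $h(\bar\theta^\lambda_{\lfrf{t}})-h(\bar\theta^\lambda_t)$.

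For (B1), the explicit form \eqref{eq:expressiontGF} shows that each component of $H_\lambda-H$ equals $H$ times a factor differing from the identity by $O(\sqrt\lambda)$ multiplied by a polynomial in $|G^{(i)}|$ and $|\theta|^{2r}$, so the growth bounds of Assumptions~\ref{asm:AG} and \ref{asm:AF} (cf.\ Remark~\ref{rem:growthHlliph}) give $\E[|B1|^2]\le C\lambda\cdot(\text{moments})$, contributing $O(\lambda^2)$ after Young's inequality. For (B3) I would use the local Lipschitz bound for $h$ from Remark~\ref{rem:growthHlliph} together with the one-step increment estimate for \eqref{eq:theopoulaproc}, in which $|\bar\theta^\lambda_t-\bar\theta^\lambda_{\lfrf{t}}|$ is of order $\sqrt\lambda$ (the Brownian increment dominating the tamed drift), again yielding an $O(\lambda^2)$ contribution. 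The delicate term is (B2): a direct Cauchy--Schwarz would only give $O(\sqrt\lambda)$, which destroys the rate. Instead I would write $e_t=e_{\lfrf{t}}+(e_t-e_{\lfrf{t}})$ and exploit that $X_{\lcrc{t}}$ is independent of $e_{\lfrf{t}}$ and of $\bar\theta^\lambda_{\lfrf{t}}$, so that $\E[\langle e_{\lfrf{t}},B2\rangle]=0$ by the unbiasedness $h(\theta)=\E[H(\theta,X_0)]$ and a tower-property argument, while the remainder $\langle e_t-e_{\lfrf{t}},B2\rangle$ is controlled through $|e_t-e_{\lfrf{t}}|=O(\lambda)$ and the growth bound on $H$, again of order $\lambda^2$. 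All moment factors in (B1)--(B3) are uniformly bounded in $t$ by the $2p$-th moment estimates of Lemma~\ref{lem:2ndpthmmt} (with $2p=8r+4$, whence the Lyapunov level $V_{4(2r+1)}$) applied to $\bar\theta^\lambda$, and by the finite SDE moments of $\bar\zeta^{\lambda,n}$ controlled through $\bar\theta^\lambda_{nT}$.

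Combining the pieces produces a differential inequality of the form $\tfrac{\rmd}{\rmd t}\E[|e_t|^2]\le \check c\,\lambda\,\E[|e_t|^2]+\lambda^2 M_n(t)$, where $M_n(t)$ collects the moment bounds and carries the structure $e^{-n a_F\kappa^\sharp_2/2}\E[V_{4(2r+1)}(\theta_0)]+\text{const}$ inherited from Lemma~\ref{lem:2ndpthmmt} evaluated at times $\ge nT$. Integrating via Grönwall from $nT$ to $t\in(nT,(n+1)T]$ with $e_{nT}=0$, and using that the block length satisfies $\lambda(t-nT)\le\lambda T=\lambda\lfrf{1/\lambda}\le1$ so that the Grönwall factor $e^{\check c\lambda(t-nT)}$ is $O(1)$ and $\int_{nT}^t\lambda^2M_n\,\rmd s\lesssim\lambda M_n$, gives $\E[|e_t|^2]\le\lambda\big(e^{-n a_F\kappa^\sharp_2/2}\bar C_0\,\E[V_{4(2r+1)}(\theta_0)]+\bar C_1\big)$; taking square roots yields the claim with the constants as in \eqref{eq:w1converp1consts}. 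I expect the main obstacle to be precisely the mean-zero term (B2): preserving the optimal $\sqrt\lambda$ rate forces one to exploit the conditional independence of the freshly sampled $X_{\lcrc{t}}$ instead of bounding $B2$ by its magnitude, and to track the super-linear moment growth (up to order $8r+4$) uniformly over the block so that the constants remain finite and the geometric decay $e^{-n a_F\kappa^\sharp_2/2}$ is retained.
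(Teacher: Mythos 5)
Your proposal is correct and follows essentially the same route as the paper's proof: a synchronous coupling driven by the common Brownian motion, a Gr\"onwall argument over one block of length $T=\lfrf{1/\lambda}$ using the one-sided Lipschitz bound \eqref{eq:hosl}, the $O(\sqrt{\lambda})$-in-$L^2$ taming error $H-H_\lambda$, the conditioning/unbiasedness trick to kill the leading part of the fluctuation term (the paper's fifth and seventh terms in \eqref{eq:convergencew2ub3}), and the $(8r+4)$-th moment estimates of Lemma \ref{lem:2ndpthmmt} to supply the $e^{-n a_F\kappa^\sharp_2/2}$ decay at level $V_{4(2r+1)}$. The only cosmetic difference is that the paper packages your time-regularity step (B3) as a Cauchy--Schwarz between the $8r$-th moment and the one-step fourth-moment bound of Lemma \ref{lem:ose4thpower}, which is exactly the H\"older separation your sketch implicitly requires.
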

\begin{proof} See Appendix \ref{proofw1converp1}.
\end{proof}
By the fact that $W_1 \leq w_{1,2}$ (see \cite[Lemma A.3]{lim2021nonasymptotic} for a detailed proof), and by applying Proposition~\ref{prop:contractionw12}, an upper estimate for the second term on the RHS of \eqref{eq:convergencew1split}  can be established.
\begin{lemma}\label{lem:w1converp2} Let Assumptions \ref{asm:AI}, \ref{asm:AG}, \ref{asm:AF}, and \ref{asm:AC} hold.  Then, for any $0<\lambda\leq \lambda_{\max}$ with $\lambda_{\max}$ given in \eqref{eq:stepsizemax}, $n \in \N_0$, and $t \in (nT, (n+1)T]$, one obtains
\[
W_1(\mathcal{L}(\bar{\zeta}_t^{\lambda,n}),\mathcal{L}(Z_t^\lambda)) \leq \sqrt{\lambda}\left(e^{-\min\{\dot{c},  a_F \kappa^\sharp_2, a_h\}n/4}\bar{C}_2\E\left[V_{4(2r+1)}(\theta_0)\right]+\bar{C}_3\right),
\]
where
\begin{align}\label{eq:w1converp2consts}
\begin{split}
\bar{C}_2
& :=\hat{c}e^{\min\{\dot{c},  a_F \kappa^\sharp_2, a_h\}/4}\left(1+ \frac{4}{\min\{\dot{c},   a_F \kappa^\sharp_2, a_h\}}\right)(\bar{C}_0+12),\\
\bar{C}_3
& := 2(\hat{c}/\dot{c})e^{ \dot{c}/2}(\bar{C}_1+15+12\mathring{c}_2+9\mathrm{v}_4(M_V(4)))
\end{split}
\end{align}
with $\dot{c}, \hat{c}$ given in Proposition \ref{prop:contractionw12}, $\bar{C}_0, \bar{C}_1$ given in \eqref{eq:w1converp1consts}, and $ a_F$, $\kappa^\sharp_2$, $\mathring{c}_{2}$ given in Lemma \ref{lem:2ndpthmmt}.
\end{lemma}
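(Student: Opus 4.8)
The plan is to bound the stronger semimetric $w_{1,2}$ of \eqref{eq:semimetricw1p} and then invoke $W_1\le w_{1,2}$ (see \cite[Lemma A.3]{lim2021nonasymptotic}). The core device is a telescoping over the family of auxiliary diffusions $(\bar\zeta^{\lambda,k})_{0\le k\le n}$ from \eqref{eq:auxzetaproc}, whose two ends are exactly the laws to be compared: $\bar\zeta^{\lambda,0}=\zeta^{0,\theta_0,\lambda}$ coincides with the time-changed diffusion $Z^\lambda$ of \eqref{eq:tcsde} (identical drift, noise $B^\lambda$ and starting value $\theta_0$), while $\bar\zeta^{\lambda,n}_{nT}=\bar\theta^\lambda_{nT}$ by definition. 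Inserting the intermediate laws $\mathcal{L}(\bar\zeta^{\lambda,k}_t)$ and using the triangle inequality for $w_{1,2}$, I would write, for $t\in(nT,(n+1)T]$,
\[
w_{1,2}(\mathcal{L}(\bar\zeta^{\lambda,n}_t),\mathcal{L}(Z^\lambda_t))\le\sum_{k=1}^{n}w_{1,2}(\mathcal{L}(\bar\zeta^{\lambda,k}_t),\mathcal{L}(\bar\zeta^{\lambda,k-1}_t)).
\]

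For each summand the two processes solve the same autonomous Langevin SDE on $[kT,t]$ and differ only in their values at time $kT$. By time-homogeneity, Proposition \ref{prop:contractionw12} applies with start time $kT$ and elapsed time $t-kT$, so that, since $\bar\zeta^{\lambda,k}_{kT}=\bar\theta^\lambda_{kT}$,
\[
w_{1,2}(\mathcal{L}(\bar\zeta^{\lambda,k}_t),\mathcal{L}(\bar\zeta^{\lambda,k-1}_t))\le\hat c\,e^{-\dot c\lambda(t-kT)}\,w_{1,2}(\mathcal{L}(\bar\theta^\lambda_{kT}),\mathcal{L}(\bar\zeta^{\lambda,k-1}_{kT})).
\]
The factor $E_k:=w_{1,2}(\mathcal{L}(\bar\theta^\lambda_{kT}),\mathcal{L}(\bar\zeta^{\lambda,k-1}_{kT}))$ is precisely the one-step error over $[(k-1)T,kT]$ estimated in Lemma \ref{lem:w1converp1} (with $n$ replaced by $k-1$ and $t$ by $kT$), which controls it in $W_2$. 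To pass to $w_{1,2}$ I would use the elementary bound $w_{1,2}(\mu,\nu)\le\sqrt3\,W_2(\mu,\nu)\,(1+\mu(V_4)+\nu(V_4))^{1/2}$, following from $1\wedge|\cdot|\le|\cdot|$, Cauchy--Schwarz and $V_2^2=V_4$. The fourth-moment factors are then estimated by the $2p$-th moment bound of Lemma \ref{lem:2ndpthmmt} (with $p=2$) for $\bar\theta^\lambda_{kT}$ and by the fourth-moment estimate of Lemma \ref{lem:zetaprocme} for $\bar\zeta^{\lambda,k-1}_{kT}$; the latter supplies an $e^{-\min\{a_h,a_F\kappa^{\sharp}_2\}\lambda kT}$ decay of the $\E[V_4(\theta_0)]$-term, which is the source of the rate $a_h$ in the final bound.

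It remains to sum. For $0<\lambda\le\lambda_{\max}\le1$ one has $\lambda T=\lambda\lfloor 1/\lambda\rfloor\ge 1/2$, hence $e^{-\dot c\lambda(t-kT)}\le e^{-\dot c(n-k)/2}$. Taking the square root of the Lemma \ref{lem:w1converp1} bound splits $E_k$ into a $\theta_0$-dependent part carrying $e^{-(k-1)a_F\kappa^{\sharp}_2/4}\E[V_{4(2r+1)}(\theta_0)]^{1/2}$ and a constant part carrying $\bar C_1^{1/2}$, each multiplied by the moment factor and by $\sqrt\lambda$. Summing the $\theta_0$-dependent part against $e^{-\dot c(n-k)/2}$ yields, whichever of $\dot c$, $a_F\kappa^{\sharp}_2$, $a_h$ is smallest, a bound of order $e^{-\min\{\dot c,a_F\kappa^{\sharp}_2,a_h\}n/4}$ times $\E[V_{4(2r+1)}(\theta_0)]$ (using $\E[V_4(\theta_0)]^{1/2}\E[V_{4(2r+1)}(\theta_0)]^{1/2}\le\E[V_{4(2r+1)}(\theta_0)]$), which assembles into $\bar C_2$; summing the constant part against $e^{-\dot c(n-k)/2}$ with $\sum_{j\ge0}e^{-\dot c j/2}\le(2/\dot c)e^{\dot c/2}$ gives the stepsize-linear remainder $\bar C_3\sqrt\lambda$, into which the moment constants $\mathring{c}_2$ and $\mathrm{v}_4(M_V(4))$ of Lemma \ref{lem:zetaprocme} are absorbed. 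Invoking $W_1\le w_{1,2}$ then gives the claim with the constants \eqref{eq:w1converp2consts}.

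I expect the main obstacle to be exactly this final bookkeeping: steering the three distinct exponential rates --- $\dot c$ from the diffusion contraction, $a_F\kappa^{\sharp}_2$ from the one-step error of Lemma \ref{lem:w1converp1}, and $a_h$ from the auxiliary-process moments of Lemma \ref{lem:zetaprocme} --- through the square-root $W_2\!\to\!w_{1,2}$ conversion and the geometric summation so that they collapse into the single rate $\min\{\dot c,a_F\kappa^{\sharp}_2,a_h\}/4$ with the precise constants $\bar C_2,\bar C_3$. A secondary point is justifying the start-time-$kT$ form of Proposition \ref{prop:contractionw12} for the $\mathcal{G}$-measurable, generally dependent initial data $\bar\theta^\lambda_{kT}$; as in \cite{nonconvex,lim2021nonasymptotic} this is done at the level of laws through a suitable coupling, the independence hypothesis there serving only to construct that coupling.
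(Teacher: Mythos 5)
Your proposal is correct and follows essentially the same route as the paper: the paper's proof simply invokes the argument of \cite[Lemma 4.7]{lim2021nonasymptotic}, which is exactly the telescoping over the auxiliary diffusions $\bar\zeta^{\lambda,k}$, the $w_{1,2}$-contraction of Proposition \ref{prop:contractionw12} applied on each block, the $W_2\!\to\!w_{1,2}$ conversion via Cauchy--Schwarz with the fourth-moment bounds of Lemmas \ref{lem:2ndpthmmt} and \ref{lem:zetaprocme}, the one-step estimate of Lemma \ref{lem:w1converp1}, and the geometric summation you describe. Your identification of the three rate sources and of the coupling point for the non-independent initial data $\bar\theta^{\lambda}_{kT}$ matches how the cited proof handles them.
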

\begin{proof} We follow exactly the proof of \cite[Lemma 4.7]{lim2021nonasymptotic}. More precisely, to obtain the explicit constants, we apply Proposition \ref{prop:contractionw12}, Lemma \ref{lem:w1converp1}, \ref{lem:2ndpthmmt}, \ref{lem:zetaprocme} instead of \cite[Proposition 4.6, Lemma 4.5, 4.2, 4.4]{lim2021nonasymptotic}.
\end{proof}
One notes that $\pi_{\beta}$ defined in \eqref{eq:pibetaexp} is the invariant measure of \eqref{eq:tcsde}. Then, by using Proposition \ref{prop:contractionw12}, and by the fact that $W_1(\mu,\nu)\leq w_{1,2}(\mu,\nu)$, one can obtain an upper estimate for the third term on the RHS of \eqref{eq:convergencew1split}, i.e., for any  $0<\lambda\leq \lambda_{\max}$ with $\lambda_{\max}$ given in \eqref{eq:stepsizemax}, $n \in \N_0$, and $t \in (nT, (n+1)T]$,
\begin{equation}\label{eq:w1converp3}
W_1(\mathcal{L}(Z_t^{\lambda}),\pi_{\beta}) \leq w_{1,2}(\mathcal{L}(Z_t^{\lambda}),\pi_{\beta}) \leq  \hat{c} e^{-\dot{c}\lambda t} w_{1,2}(\mathcal{L}(\theta_0),\pi_{\beta}).
\end{equation}
\begin{proof}[\textbf{Proof of Theorem \ref{thm:convergencew1}}] Recall the definition of $w_{1,2}$ in \eqref{eq:semimetricw1p}. By applying Lemma \ref{lem:w1converp1}, \ref{lem:w1converp2} and \eqref{eq:w1converp3} to \eqref{eq:convergencew1split}, one obtains, for $t \in (nT, (n+1)T]$,
\begin{align}\label{eq:convergencew1ue1}
W_1(\mathcal{L}(\bar{\theta}^{\lambda}_t),\pi_{\beta})
&\leq \sqrt{\lambda}  \left(e^{- n  a_F \kappa^\sharp_2/2 } \bar{C}_0 \E\left[V_{4(2r+1)}(\theta_0)\right] +\bar{C}_1 \right)^{1/2} \nonumber\\
&\quad +\sqrt{\lambda}\left(e^{-\min\{\dot{c},  a_F \kappa^\sharp_2, a_h\}n/4}\bar{C}_2\E\left[V_{4(2r+1)}(\theta_0)\right]+\bar{C}_3\right)\nonumber\\
&\quad + \hat{c} e^{-\dot{c}\lambda t}\left(1+ \mathbb{E}[V_2(\theta_0)]+\int_{\mathbb{R}^d}V_2(\theta)\pi_{\beta}(d\theta)\right)\nonumber\\
&\leq C_1 e^{-C_0 (n+1)}(\E[|\theta_0|^{4(2r+1)}]+1) +C_2\sqrt{\lambda},
\end{align}
where
\begin{align}\label{eq:convergencew1consts}
\begin{split}
C_0&:=\min\{\dot{c},  a_F \kappa^\sharp_2, a_h\}/4,\\
C_1&:=2^{4r+1}e^{\min\{\dot{c},  a_F \kappa^\sharp_2, a_h\}/4}\left(\bar{C}_0^{1/2}+\bar{C}_2+ \hat{c} \left(2+\int_{\mathbb{R}^d}V_2(\theta)\pi_{\beta}(d\theta)\right)\right),\\
C_2&:=\bar{C}_1^{1/2}+\bar{C}_3
\end{split}
\end{align}
with $\dot{c}, \hat{c}$ given in Proposition \ref{prop:contractionw12}, $ a_F$, $\kappa^\sharp_2$ given in Lemma \ref{lem:2ndpthmmt}, $\bar{C}_0, \bar{C}_1$ given in \eqref{eq:w1converp1consts}, $\bar{C}_2, \bar{C}_3$ given in \eqref{eq:w1converp2consts}. One notes that \eqref{eq:convergencew1ue1} implies
\[
W_1(\mathcal{L}(\bar{\theta}^{\lambda}_{nT}),\pi_{\beta}) \leq C_1 e^{-C_0 n}(\E[|\theta_0|^{4(2r+1)}]+1) +C_2\sqrt{\lambda},
\]
which yields the desired result by replacing $nT$ with $n$ on the LHS and by replacing $n$ with $n/T \geq \lambda n$ on the RHS.
\end{proof}
\begin{proof}[\textbf{Proof of Corollary \ref{thm:convergencew2}}] For any $n \in \N_0$, and $t \geq nT$, recall that $\bar{\zeta}^{\lambda, n}_t := \zeta^{nT,\bar{\theta}^{\lambda}_{nT}, \lambda}_t$ with $\zeta^{nT,\bar{\theta}^{\lambda}_{nT}, \lambda}_t$ defined in \eqref{eq:auxzetaproc}, and that $Z_t^{\lambda}$ is defined in \eqref{eq:tcsde}. To obtain a non-asymptotic estimate between the law of e-TH$\varepsilon$O POULA \eqref{eq:theopoula}-\eqref{eq:expressiontGF} and $\pi_{\beta}$ (given in \eqref{eq:pibetaexp}) in Wasserstein-2 distance, we consider the following splitting approach: for any $n \in \N_0$, and $t \in (nT, (n+1)T]$,
\begin{equation}\label{eq:convergencew2split}
W_2(\mathcal{L}(\bar{\theta}^{\lambda}_t),\pi_{\beta})\leq W_2(\mathcal{L}(\bar{\theta}^{\lambda}_t),\mathcal{L}(\bar{\zeta}^{\lambda, n}_t))+W_2(\mathcal{L}(\bar{\zeta}^{\lambda, n}_t),\mathcal{L}(Z_t^{\lambda}))+W_2(\mathcal{L}(Z_t^{\lambda}),\pi_{\beta}).
\end{equation}
An explicit upper estimate for the first term on the RHS of \eqref{eq:convergencew2split} is provided in Lemma \ref{lem:w1converp1}. To obtain an upper bound for the second term on the RHS of \eqref{eq:convergencew2split}, one follows the same lines as in the proof of \cite[Lemma 4.7]{lim2021nonasymptotic} while applying $W_2 \leq \sqrt{2w_{1,2}}$ (see \cite[Lemma A.3]{lim2021nonasymptotic} for a detailed proof) instead of $W_1\leq w_{1,2}$, and applying Lemma \ref{lem:2ndpthmmt}, \ref{lem:zetaprocme} for the moment estimates of $\bar{\zeta}^{\lambda, n}_t$ and $\bar{\theta}^{\lambda}_t$ instead of \cite[Lemma 4.2, 4.4]{lim2021nonasymptotic}. Then, one obtains,
\begin{align}\label{eq:w2converp2}
W_2(\mathcal{L}(\bar{\zeta}_t^{\lambda,n}),\mathcal{L}(Z_t^\lambda)) \leq \lambda^{1/4}\left(e^{-\min\{\dot{c},  a_F \kappa^\sharp_2, a_h\}n/8}\bar{C}_4\left(\E\left[V_{4(2r+1)}(\theta_0)\right]\right)^{1/2}+\bar{C}_5\right),
\end{align}
where
\begin{align}\label{eq:w2converp2consts}
\begin{split}
\bar{C}_4
& :=\sqrt{\hat{c}}e^{\min\{\dot{c},  a_F \kappa^\sharp_2, a_h\}/8}\left(1+ \frac{8}{\min\{\dot{c},   a_F \kappa^\sharp_2, a_h\}}\right)(\bar{C}_0^{1/2}+2\sqrt{2}),\\
\bar{C}_5
& := 4(\sqrt{\hat{c}}/\dot{c})e^{ \dot{c}/4}(\bar{C}_1^{1/2}+1+2\sqrt{2}+2\sqrt{2\mathring{c}_2}+\sqrt{3\mathrm{v}_4(M_V(4))})
\end{split}
\end{align}
with $\dot{c}, \hat{c}$ given in Proposition \ref{prop:contractionw12}, $\bar{C}_0, \bar{C}_1$ given in \eqref{eq:w1converp1consts}, $ a_F$, $\kappa^\sharp_2$, $\mathring{c}_{2}$ given in Lemma \ref{lem:2ndpthmmt}, and $M_V(4)$ given in Lemma \ref{lem:Lyapunovdriftc}. For the last term on the RHS of \eqref{eq:convergencew2split}, an upper bound can be obtained by using $W_2 \leq \sqrt{2w_{1,2}}$ and Proposition \ref{prop:contractionw12} as follows:
\begin{align}\label{eq:w2converp3}
W_2(\mathcal{L}(Z_t^{\lambda}),\pi_{\beta}) \leq \sqrt{2}w_{1,2}^{1/2}(\mathcal{L}(Z_t^{\lambda}),\pi_{\beta}) \leq \sqrt{2\hat{c}} e^{-\dot{c}\lambda t/2} w_{1,2}^{1/2}(\mathcal{L}(\theta_0),\pi_{\beta}).
\end{align}
Applying Lemma \ref{lem:w1converp1}, \eqref{eq:w2converp2}, and \eqref{eq:w2converp3} to \eqref{eq:convergencew2split}, one obtains, for any $n \in \N_0$, and $t \in (nT, (n+1)T]$,
\begin{align}\label{eq:convergencew2ue1}
W_2(\mathcal{L}(\bar{\theta}^{\lambda}_t),\pi_{\beta})
&\leq \sqrt{\lambda}  \left(e^{- n  a_F \kappa^\sharp_2/2 } \bar{C}_0 \E\left[V_{4(2r+1)}(\theta_0)\right] +\bar{C}_1 \right)^{1/2} \nonumber\\
&\quad +\lambda^{1/4}\left(e^{-\min\{\dot{c},  a_F \kappa^\sharp_2, a_h\}n/8}\bar{C}_4\left(\E\left[V_{4(2r+1)}(\theta_0)\right]\right)^{1/2}+\bar{C}_5\right) \nonumber\\
&\quad + \sqrt{2\hat{c}} e^{-\dot{c}\lambda t/2}\left(1+ \mathbb{E}[V_2(\theta_0)]+\int_{\mathbb{R}^d}V_2(\theta)\pi_{\beta}(d\theta)\right)^{1/2}\nonumber\\
&\leq   C_4 e^{-C_3 (n+1)}(\E[|\theta_0|^{4(2r+1)}]+1)^{1/2} +C_5\lambda^{1/4},
\end{align}
where
\begin{align}\label{eq:convergencew2consts}
\begin{split}
C_3&:=\min\{\dot{c},  a_F \kappa^\sharp_2, a_h\}/8,\\
C_4&:=2^{2r+1/2}e^{\min\{\dot{c},  a_F \kappa^\sharp_2, a_h\}/8}\left(\bar{C}_0^{1/2}+\bar{C}_4+\sqrt{2 \hat{c}} \left(2+\int_{\mathbb{R}^d}V_2(\theta)\pi_{\beta}(d\theta)\right)^{1/2}\right),\\
C_5&:=\bar{C}_1^{1/2}+\bar{C}_5
\end{split}
\end{align}
with $\dot{c}, \hat{c}$ given in Proposition \ref{prop:contractionw12}, $ a_F$, $\kappa^\sharp_2$ given in Lemma \ref{lem:2ndpthmmt}, $\bar{C}_0, \bar{C}_1$ given in \eqref{eq:w1converp1consts}, and $\bar{C}_4, \bar{C}_5$ given in \eqref{eq:w2converp2consts}.
\end{proof}
By using the non-asymptotic estimate provided in Corollary \ref{thm:convergencew2}, one can obtain an upper estimate for the expected excess risk, i.e., $\E[u( \theta_n^{\lambda})] - u^*$, where $u^*:= \inf_{\theta \in \R^d} u(\theta)$ with $u$ given in \eqref{eq:obju}. We proceed with the following splitting:
\begin{equation}\label{eq:opeersplitting}
\E[u( \theta_n^{\lambda})] - u^* = \E[u( \theta_n^{\lambda})] - \E[u(Z_{\infty})]+\E[u(Z_{\infty})]- u^*,
\end{equation}
where $Z_{\infty}$ is an $\R^d$-valued random variable with $\mathcal{L}(Z_{\infty}) = \pi_{\beta}$.

In the following lemma, we provide an estimate for the first term on the RHS of \eqref{eq:opeersplitting}.
\begin{lemma}\label{lem:opeerp1} Let Assumptions \ref{asm:AI}, \ref{asm:AG}, \ref{asm:AF}, and \ref{asm:AC} hold.  Then, for any $0<\lambda\leq \lambda_{\max}$ with $\lambda_{\max}$ given in \eqref{eq:stepsizemax}, $n \in \N_0$, one obtains
\[
 \E[u( \theta_n^{\lambda})] - \E[u(Z_{\infty})] \leq C_7 e^{-C_6\lambda n}+C_8\lambda^{1/4},
\]
where
\begin{align}\label{eq:opeerp1consts}
\begin{split}
C_6&:=C_3 ,\\
C_7&:= 2^{2r}K_H\E[(1+|X_0|)^{\rho}]\left(C_4(2+\mathring{c}_{2r+1}^{1/2}+c_{Z_{\infty}, 4r+2}^{1/2})+C_5\right)  (\E[|\theta_0|^{4(2r+1)}]+1),\\
C_8&:=2^{2r}K_H\E[(1+|X_0|)^{\rho}]C_5(1+\mathring{c}_{2r+1}^{1/2}+c_{Z_{\infty}, 4r+2}^{1/2})
\end{split}
\end{align}
with $C_3, C_4, C_5$ given in \eqref{eq:convergencew2consts}, $\mathring{c}_{2r+1}$ given in Lemma \ref{lem:2ndpthmmt}, $c_{Z_{\infty}, 4r+2}$ denoting the $4r+2$-th moment of $\pi_{\beta}$.
\end{lemma}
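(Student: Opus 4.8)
The plan is to reduce the claim to two ingredients already available: a polynomial-Lipschitz estimate for $u$ coming from the growth bound on $h$, and the Wasserstein-2 bound of Corollary \ref{thm:convergencew2}. First I would record that, since $h=\nabla u$ and $h(\theta)=\E[H(\theta,X_0)]$, the growth bound $|H(\theta,x)|\le K_H(1+|x|)^{\rho}(1+|\theta|^{2r+1})$ of Remark \ref{rem:growthHlliph} yields, after taking expectations in the data variable and using Jensen, the deterministic bound $|h(\theta)|\le K_H\E[(1+|X_0|)^{\rho}](1+|\theta|^{2r+1})$ for all $\theta\in\R^d$. Writing
\[
u(\theta)-u(\bar{\theta})=\int_0^1\langle h(\bar{\theta}+s(\theta-\bar{\theta})),\theta-\bar{\theta}\rangle\,\rmd s
\]
and using $|\bar{\theta}+s(\theta-\bar{\theta})|\le|\theta|+|\bar{\theta}|$ together with $(|\theta|+|\bar{\theta}|)^{2r+1}\le 2^{2r}(|\theta|^{2r+1}+|\bar{\theta}|^{2r+1})$, this gives the key polynomial-Lipschitz estimate
\[
|u(\theta)-u(\bar{\theta})|\le 2^{2r}K_H\E[(1+|X_0|)^{\rho}]\,(1+|\theta|^{2r+1}+|\bar{\theta}|^{2r+1})\,|\theta-\bar{\theta}|.
\]

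Next I would couple $\theta_n^{\lambda}$ and $Z_{\infty}$ optimally for $W_2$: choose a pair on a common probability space with marginals $\mathcal{L}(\theta_n^{\lambda})$ and $\pi_{\beta}$ attaining $W_2(\mathcal{L}(\theta_n^{\lambda}),\pi_{\beta})$, which is legitimate since both measures have finite moments of all relevant orders (by Lemma \ref{lem:2ndpthmmt} for the algorithm and by the finiteness of the moments of $\pi_{\beta}$ recorded after \eqref{eq:tcsde}). Taking expectations in the displayed estimate and applying Cauchy--Schwarz splits the bound into a moment factor and the transport cost:
\[
\E[u(\theta_n^{\lambda})]-\E[u(Z_{\infty})]\le 2^{2r}K_H\E[(1+|X_0|)^{\rho}]\bigl(\E[(1+|\theta_n^{\lambda}|^{2r+1}+|Z_{\infty}|^{2r+1})^2]\bigr)^{1/2}\bigl(\E[|\theta_n^{\lambda}-Z_{\infty}|^2]\bigr)^{1/2},
\]
where the last factor is precisely $W_2(\mathcal{L}(\theta_n^{\lambda}),\pi_{\beta})$.

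For the moment factor I would apply Minkowski's inequality in $L^2$ to bound it by $1+(\E[|\theta_n^{\lambda}|^{4r+2}])^{1/2}+(\E[|Z_{\infty}|^{4r+2}])^{1/2}$. The second term equals $c_{Z_{\infty},4r+2}^{1/2}$ by definition, and the first is controlled by Lemma \ref{lem:2ndpthmmt}\ref{lem:2ndpthmmtii} applied with $p=2r+1$ (so $2p=4r+2$), giving $\E[|\theta_n^{\lambda}|^{4r+2}]=\E[|\bar{\theta}^{\lambda}_n|^{4r+2}]\le\E[|\theta_0|^{4r+2}]+\mathring{c}_{2r+1}$. I would then substitute the bound of Corollary \ref{thm:convergencew2}, namely $W_2(\mathcal{L}(\theta_n^{\lambda}),\pi_{\beta})\le C_4 e^{-C_3\lambda n}(\E[|\theta_0|^{4(2r+1)}]+1)^{1/2}+C_5\lambda^{1/4}$, and multiply out. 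Collecting the exponentially decaying contributions (absorbing the $\theta_0$-dependent part of the moment factor into $(\E[|\theta_0|^{4(2r+1)}]+1)$ via $(\E[|\theta_0|^{4r+2}])^{1/2}\le(\E[|\theta_0|^{4(2r+1)}]+1)^{1/2}$ and the crude bound $M\le M^2$ for $M:=(\E[|\theta_0|^{4(2r+1)}]+1)^{1/2}\ge1$) produces $C_7$ with $C_6=C_3$, while the $\lambda^{1/4}$ contributions produce $C_8$.

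No new analytic difficulty arises beyond these two borrowed estimates; the only genuinely new step is the passage from the growth of the stochastic gradient $H$ to the polynomial-Lipschitz control of $u$ via the fundamental theorem of calculus. I expect the main care to lie in the bookkeeping: keeping the orders $4r+2$ and $8r+4=4(2r+1)$ straight (the former entering through Lemma \ref{lem:2ndpthmmt}, the latter through Corollary \ref{thm:convergencew2}), and routing all $\theta_0$-dependence into the exponentially-decaying term so that the coefficient $C_8$ of $\lambda^{1/4}$ is a bona fide constant depending only on $\mathring{c}_{2r+1}$, $c_{Z_{\infty},4r+2}$, and the model parameters.
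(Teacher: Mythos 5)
Your proposal follows essentially the same route as the paper's proof, which consists precisely of the three substitutions you make: the growth bound on $h$ from Remark \ref{rem:growthHlliph} (turned into a polynomial-Lipschitz estimate for $u$ via the fundamental theorem of calculus), the moment estimate of Lemma \ref{lem:2ndpthmmt}, and the Wasserstein-2 bound of Corollary \ref{thm:convergencew2}, combined through an optimal $W_2$ coupling and Cauchy--Schwarz.

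One bookkeeping point needs fixing to land on the stated constants \eqref{eq:opeerp1consts}. You invoke Lemma \ref{lem:2ndpthmmt} only in its uniform-in-$n$ form, $\E[|\theta_n^{\lambda}|^{4r+2}]\le\E[|\theta_0|^{4r+2}]+\mathring{c}_{2r+1}$. With that bound, the cross term $2^{2r}K_H\E[(1+|X_0|)^{\rho}](\E[|\theta_0|^{4r+2}])^{1/2}C_5\lambda^{1/4}$, obtained by multiplying the $\theta_0$-dependent part of the moment factor by the $C_5\lambda^{1/4}$ part of the $W_2$ bound, depends on $\theta_0$ but does not decay in $n$; it therefore fits into neither $C_7e^{-C_6\lambda n}$ nor $C_8\lambda^{1/4}$ with $C_8$ as stated (which carries no $\theta_0$-moment). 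The remedy is to use the full form of Lemma \ref{lem:2ndpthmmt}-\ref{lem:2ndpthmmtii} with $p=2r+1$, namely $\E[|\theta_n^{\lambda}|^{4r+2}]\le(1-\lambda a_F\kappa^{\sharp}_2)^n\E[|\theta_0|^{4r+2}]+\mathring{c}_{2r+1}$: the $\theta_0$-dependent part of the moment factor then carries its own factor $e^{-\lambda a_F\kappa^{\sharp}_2 n/2}\le e^{-C_3\lambda n}$ and, using $\lambda^{1/4}\le1$, is routed into the $C_5(\E[|\theta_0|^{4(2r+1)}]+1)$ portion of $C_7$, leaving exactly $C_8=2^{2r}K_H\E[(1+|X_0|)^{\rho}]C_5(1+\mathring{c}_{2r+1}^{1/2}+c_{Z_{\infty},4r+2}^{1/2})$. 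Everything else in your argument is sound.
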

\begin{proof} The proof follows the same arguments as in the proof of \cite[Lemma 4.8]{lim2021nonasymptotic}. However, to obtain explicit constants, we apply Remark \ref{rem:growthHlliph} for the growth condition of $h$ rather than \cite[Remark 2.2]{lim2021nonasymptotic}, Lemma \ref{lem:2ndpthmmt} for the moment estimate of $\theta_n^{\lambda}$ rather than \cite[Lemma 4.2]{lim2021nonasymptotic}, and apply Corollary \ref{thm:convergencew2} for the upper estimate of $W_2(\mathcal{L}(\bar{\theta}^{\lambda}_n),\pi_{\beta})$ rather than \cite[Corollary 2.9]{lim2021nonasymptotic}.
\end{proof}
\begin{lemma}\label{lem:opeerp2} Let Assumptions \ref{asm:AI}, \ref{asm:AG}, \ref{asm:AF}, and \ref{asm:AC} hold.  Then, for any $0<\lambda\leq \lambda_{\max}$ with $\lambda_{\max}$ given in \eqref{eq:stepsizemax}, $n \in \N_0$, and any $\beta>0$, one obtains
\[
\E[u(Z_{\infty})]- u^* \leq C_9/\beta,
\]
where
\begin{equation}\label{eq:opeerp2consts}
C_9\equiv  C_9(\beta): = \frac{d}{2}\log\left(\frac{L_h(1+4\max\{ \sqrt{b_h/a_h},\sqrt{2d/(\beta L_h)}\})^{2r}  e}{a_h}\left(\frac{\beta b_h}{d}+1\right)\right) +\log 2.
\end{equation}
In particular, we have that $\lim_{\beta \to \infty} C_9(\beta)/\beta =0$.
\end{lemma}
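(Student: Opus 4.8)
The plan is to follow the free-energy (entropy) comparison introduced in \cite{raginsky}, adapting it to our \emph{local} (rather than global) gradient-Lipschitz setting. First I would recenter the objective by setting $\tilde u := u - u^*$, so that $\pi_{\beta} \wasypropto e^{-\beta \tilde u}$ is unchanged and $\tilde u \ge 0$ attains its minimum at some $\theta^*$, whose existence is guaranteed by \eqref{eq:hdissip} in Remark \ref{rem:Fhdissiposl}. Writing $\tilde Z_\beta := \int_{\R^d} e^{-\beta \tilde u(\theta)}\,\rmd\theta$ and $\mathcal H(\pi_{\beta}) := -\int_{\R^d}\pi_{\beta}\log\pi_{\beta}\,\rmd\theta$ for the differential entropy, the identity $\log\pi_{\beta} = -\beta\tilde u - \log\tilde Z_\beta$ yields
\[
\E[u(Z_{\infty})] - u^* = \E_{\pi_{\beta}}[\tilde u] = \tfrac1\beta\big(\mathcal H(\pi_{\beta}) - \log\tilde Z_\beta\big).
\]
It then suffices to bound $\mathcal H(\pi_{\beta})$ from above and $\log\tilde Z_\beta$ from below.

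For the entropy, I would invoke the fact that, among all probability densities on $\R^d$ with a prescribed covariance, the Gaussian maximizes differential entropy; combined with the inequality $\det\Sigma \le (\mathrm{tr}\,\Sigma/d)^d$ this gives $\mathcal H(\pi_{\beta})\le \frac d2\log\!\big(\frac{2\pi e}{d}\,\mathrm{tr}\,\Sigma\big)$, where $\Sigma$ is the covariance of $\pi_{\beta}$. Since $\mathrm{tr}\,\Sigma = \E_{\pi_{\beta}}|X - \E_{\pi_{\beta}}X|^2 \le \E_{\pi_{\beta}}|X|^2$, and since the dissipativity \eqref{eq:hdissip} yields the stationary second-moment bound $\E_{\pi_{\beta}}|X|^2 \le b_h/a_h + d/(a_h\beta)$ (obtained by applying the generator of \eqref{eq:sde} to $|\theta|^2$ and using invariance of $\pi_{\beta}$), the trace is controlled by exactly the quantity $S := \frac{b_h}{a_h}+\frac{d}{a_h\beta}$ that appears, after simplification, in the final constant.

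The delicate step, and the main obstacle, is the lower bound on $\log\tilde Z_\beta$, because $h=\nabla u$ is only \emph{locally} Lipschitz, so the usual global quadratic majorant of $\tilde u$ is unavailable. I would first note $h(\theta^*)=0$, so $0=\langle\theta^*,h(\theta^*)\rangle\ge a_h|\theta^*|^2-b_h$ forces $|\theta^*|\le\sqrt{b_h/a_h}\le R_*$ with $R_* := \max\{\sqrt{b_h/a_h},\sqrt{2d/(\beta L_h)}\}$. Restricting the integral to the ball $\{|\theta-\theta^*|\le R_*\}$ and using $\tilde u(\theta)=\int_0^1\langle h(\theta^*+t(\theta-\theta^*))-h(\theta^*),\theta-\theta^*\rangle\,\rmd t$ together with the local Lipschitz estimate $|h(\theta)-h(\bar\theta)|\le L_h(1+|\theta|+|\bar\theta|)^{2r}|\theta-\bar\theta|$ from Remark \ref{rem:growthHlliph}, I would obtain $\tilde u(\theta)\le \frac{M_0}{2}|\theta-\theta^*|^2$ on this ball, where $M_0 := L_h(1+4R_*)^{2r}$ is the effective Lipschitz constant (with $1+4R_*$ a generous upper bound for $1+2|\theta^*|+|\theta-\theta^*|$). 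This gives $\tilde Z_\beta \ge \int_{|\theta-\theta^*|\le R_*} e^{-\beta M_0|\theta-\theta^*|^2/2}\,\rmd\theta$, and the remaining point is that this truncated Gaussian integral retains at least half of the full value $(2\pi/(\beta M_0))^{d/2}$. Here Markov's inequality applied to $|Y|^2$ for $Y\sim N(0,(\beta M_0)^{-1}I_d)$ gives $P(|Y|>R_*)\le d/(\beta M_0 R_*^2)\le 1/2$, since $M_0\ge L_h$ forces $R_*^2\ge 2d/(\beta L_h)\ge 2d/(\beta M_0)$; this is precisely where the factor $2$ inside $R_*$ and the additive $\log 2$ originate. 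Combining the three estimates gives $\E_{\pi_{\beta}}[\tilde u]\le\frac1\beta\big(\frac d2\log(e S\beta M_0/d)+\log2\big)$, and substituting $S\beta M_0/d = \frac{M_0}{a_h}\big(\frac{\beta b_h}{d}+1\big)$ with $M_0=L_h(1+4R_*)^{2r}$ recovers the stated $C_9/\beta$. Finally, as $\beta\to\infty$ one has $R_*\to\sqrt{b_h/a_h}$, so $M_0$ stays bounded while the logarithm grows only like $\log\beta$; hence $C_9(\beta)=O(\log\beta)$ and $\lim_{\beta\to\infty}C_9(\beta)/\beta=0$.
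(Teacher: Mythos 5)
Your proposal is correct and follows essentially the same route as the paper, whose proof simply defers to \cite[Lemma 4.9]{lim2021nonasymptotic} (itself the free-energy decomposition of \cite{raginsky} adapted to the locally Lipschitz gradient via Remarks \ref{rem:growthHlliph} and \ref{rem:Fhdissiposl}). Your reconstruction — entropy upper bound via the Gaussian maximum-entropy argument with the stationary second-moment bound $b_h/a_h+d/(a_h\beta)$, plus the partition-function lower bound on the ball of radius $R_*=\max\{\sqrt{b_h/a_h},\sqrt{2d/(\beta L_h)}\}$ with the local quadratic majorant $L_h(1+4R_*)^{2r}|\theta-\theta^*|^2/2$ and the Markov-inequality factor $1/2$ — reproduces the stated constant $C_9(\beta)$ exactly, including the additive $\log 2$.
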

\begin{proof} We follow the ideas in the proof of \cite[Lemma 4.9]{lim2021nonasymptotic} while applying Remark \ref{rem:Fhdissiposl} for the dissipativity condition on $h$ rather than \cite[Remark 2.5]{lim2021nonasymptotic}, and applying Remark \ref{rem:growthHlliph} for the local Lipschitz condition of $h$ rather than \cite[Remark 2.2]{lim2021nonasymptotic}.
\end{proof}
\begin{proof}[\textbf{Proof of Theorem \ref{thm:opeer}}] Substituting the results in Lemma \ref{lem:opeerp1}, \ref{lem:opeerp2} into \eqref{eq:opeersplitting} yields the desired non-asymptotic error bound of the expected excess risk.
\end{proof}

\begin{proof}[\textbf{Proof of Corollary \ref{corollary:eerepsilon}}]For any $\delta>0$, if we first choose $\beta$ such that $C_9/\beta \leq \delta/3$, then choose $\lambda$ such that $ \lambda\leq \lambda_{\max}$ with $\lambda_{\max}$ given in \eqref{eq:stepsizemax} and $C_8\lambda^{1/4}\leq \delta/3$, and finally choose $C_7 e^{-C_6\lambda n} \leq \delta/3$, consequently, we have $\E[u( \theta_n^{\lambda})] - u^* \leq \delta$.

We note that $C_9/\beta \leq \delta/3$ is achieved if we choose
\[
\beta \geq \max\left\{1,  \frac{9d^2}{\delta^2}, \left(\frac{3d}{\delta}  \log\left(\frac{L_h(1+4(\sqrt{b_h/a_h}+\sqrt{2d/ L_h}))^{2r}  e}{a_h d}\left(b_h+1\right)\left(d+1\right)\right) +\frac{\log 64}{\delta}\right)\right\}.
\]
Indeed, for any $\beta \geq 1$, we have that
\begin{align*}
\frac{C_9}{\beta}
&\leq \frac{d}{2\beta}  \log\left(\frac{L_h(1+4(\sqrt{b_h/a_h}+\sqrt{2d/ L_h}))^{2r}  e}{a_h d}\left(b_h+1\right)\left(d+1\right)\left(\beta+1\right)\right)+\frac{\log 2}{\beta}\\
&\leq \frac{1}{\beta}\left( \frac{d}{2 }  \log\left(\frac{L_h(1+4(\sqrt{b_h/a_h}+\sqrt{2d/ L_h}))^{2r}  e}{a_h d}\left(b_h+1\right)\left(d+1\right)\right)+\log 2\right) + \frac{d}{2\sqrt{ \beta}}\\
&\leq \frac{\delta}{6}+\frac{\delta}{6} = \frac{\delta}{3}.
\end{align*}
where we use that $\log(1+\beta)/\beta \leq 1/\sqrt{1+\beta}\leq 1/\sqrt{\beta}$ holds for all $\beta>0$ in the second inequality. Furthermore, we have that $\lambda \leq \min\{ \lambda_{\max}, \delta^4/(81C_8^4)\}$, and $\lambda n \geq (1/C_6)\log (3C_7/\delta)$ which further implies that $n \geq \max\{ (1/C_6\lambda_{\max})\log (3C_7/\delta), (81C_8^4/C_6 \delta^4)\log (3C_7/\delta)\}$.
\end{proof}

\newpage
\appendix
\section{Proof of auxiliary results}
\subsection{Proof of auxiliary results in Section \ref{sec:assumption}}\label{sec:remarkproofs}
\begin{proof}[\textbf{Proof of statement in Remark \ref{rem:growthHlliph}}] By using Assumption \ref{asm:AF}, for any $\theta \in \R^d, x \in \R^m$, one obtains
\begin{align*}
|F(\theta, x)|= \left(\sum_{i = 1}^d |F^{(i)}(\theta,x) |^2\right)^{1/2}
& \leq \left(\sum_{i = 1}^d K_F^2(1+|x|)^{2\rho}(1+|\theta^{(i)}|)^2(1+|\theta|^{2r})^2\right)^{1/2} \\
&\leq 3\sqrt{2d} K_F (1+|x|)^{ \rho} (1 +|\theta|^{2r+1}),
\end{align*}
where 
we use $(a+b)^2\leq 2a^2+2b^2$ and $a^v \leq 1 +a^{2r+1}$, for $v \in\{ 1, 2r\}$, $a,b \geq 0$, $r \geq 1$. Then, by using Assumption \ref{asm:AG}, $2r \geq q \geq 1$, and $(1+a)^{2r+1} \leq 2^{2r}(1+a^{2r+1})$, $a \geq 0$, one obtains
\[
|G(\theta,x) | \leq K_G(1+|x|)^{\rho}(1+|\theta|)^{2r+1} \leq 2^{2r}K_G(1+|x|)^{\rho}(1+|\theta|^{2r+1}).
\]
Recall the expressin of $H$ given in \eqref{eq:expH}. Combining the results above yields the first inequality in Remark \ref{rem:growthHlliph}. Furthermore, one notes that the second inequality follows from the local Lipschitz continuity (in average) imposed on $F, G$, see Assumptions \ref{asm:AI}, \ref{asm:AG}, and \ref{asm:AF}.
\end{proof}
\subsection{Proof of auxiliary results in Section \ref{sec:numapp}}
\begin{proof}[\textbf{Proof of Proposition \ref{prop:optim_tl_reg_vasm}}]\label{sec:optim_tl_reg_proof}
For illustrative purposes, we consider the case where $K = 1$. Moreover, to ease the notation, we use $W_1^{\mathfrak{N}}$ instead of $W_1^{\mathfrak{N}(\widetilde \theta, W_0)}$ throughout the proof. By using \eqref{eq:dpp}, \eqref{eq:optim_bsvar}, \eqref{eq:th}, and the fact that $\{R_k\}_{k=0}^{K-1}$ are independent, \eqref{eq:optim_tl_reg} can be written explicitly as
\begin{align*}
V(0,2,W_0)
& = \min_{g_0 \in \mathcal{U}}\E\left[  \min_{g_1 \in \mathcal{U}}\E\left[\left. \left(W_1^{g_0}(\langle g_1(W_1^{g_0}),R_1\rangle+R_f)-\frac{\gamma}{2}\right)^2 \right|\mathcal{F}_1\right] \right]\\
&\approx \min_{g_0 \in \mathcal{U}}\Bigg(\E\left[  (W_1^{g_0})^2\left(\sum_{i=1}^p(\widetilde{g}_1^i(W_1^{g_0}))^2\overline{\overline{r}}_1^{ii}+\sum_{\substack{i\neq j,i,j=1}}^p\widetilde{g}_1^i(W_1^{g_0})\widetilde{g}_1^j(W_1^{g_0})\overline{\overline{r}}_1^{ij}\right.\right.\Bigg.\\
&\qquad \Bigg.\left.
\left.+2\sum_{i=1}^p\widetilde{g}_1^i(W_1^{g_0})\overline{r}_1^iR_f+R_f^2\right)
-W_1^{g_0}\left(\sum_{i = 1}^p\widetilde{g}_1^i(W_1^{g_0})\overline{r}^i_1+R_f\right)\gamma
 \right]\Bigg.
+\frac{\gamma^2}{4} \Bigg) \approx \min_{\widetilde{\theta} \in \R^{\widetilde{d}}} v(\widetilde{\theta}),
\end{align*}
where $v$ is given by
\begin{align}\label{eq:dpptlobjv}
\begin{split}
v(\widetilde{\theta})
&:=\E\left[  (W_1^{\mathfrak{N}})^2\left(\sum_{i=1}^p(\widetilde{g}_1^i(W_1^{\mathfrak{N}}))^2\overline{\overline{r}}_1^{ii}+\sum_{\substack{i\neq j,i,j=1}}^p\widetilde{g}_1^i(W_1^{\mathfrak{N}})\widetilde{g}_1^j(W_1^{\mathfrak{N}})\overline{\overline{r}}_1^{ij}+2\sum_{i=1}^p\widetilde{g}_1^i(W_1^{\mathfrak{N}})\overline{r}_1^iR_f\right.\right.\Bigg.\\
&\qquad \Bigg.\left.
\left.+R_f^2\right)
-W_1^{\mathfrak{N}}\left(\sum_{i = 1}^p\widetilde{g}_1^i(W_1^{\mathfrak{N}})\overline{r}^i_1+R_f\right)\gamma
 \right]\Bigg.
+\frac{\gamma^2}{4}  +\frac{\eta|\widetilde{\theta}|^{2(r+1)}}{2(r+1)},\\
\end{split}
\end{align}
and where we recall that $W_1^{\mathfrak{N}} = W_{0}(\langle\mathfrak{N}( \widetilde \theta, W_{0}), R_{0}\rangle +R_f)\in \R$ with $\mathfrak{N}$ given in \eqref{eq:slfn}, $\widetilde{g}_1$ denotes the approximation of the optimal $g_1$ by the neural network defined in \eqref{def:tlfn_sigmoid} with trained parameters $\theta^* = (\theta_0^*, \dots, \theta_{K-1}^*)$, $\overline{\overline{r}}_1\in \R^{p\times p}$ with $\overline{\overline{r}}_1^{ii}:=\E[(R_1^i)^2]$ and $\overline{\overline{r}}_1^{ij}:=\E[R_1^iR_1^j]$ for $i \neq j$, $\overline{r}_1 \in \R^p$ with $\overline{r}_1^i :=\E[R_1^i]$, $\gamma>0, r\geq1/2$, and $\eta>0$. In particular, $\bar{g}_1(y) := \tanh(K_3^*\sigma_2(K_2^*\sigma_2(K_1^*y+b_1^*)+b_2^*)+b_3^*)$, $y \in \R$, where $\sigma_2(z) = 1/(1+e^{-z})$, $z \in \R^{\nu}$, is the sigmoid activation function applied componentwise, and $\theta^* = ([K_1^*],[K_2^*],[K_3^*],b_1^*,b_2^*,b_3^*)$ are the trained parameters. 
For any $\widetilde{\theta} \in \R^{\widetilde{d}}, z\in \R, r_0 \in \R^p$, denote by $
w_1^{\mathfrak{N}} := z(\langle \mathfrak{N}( \widetilde \theta, z), r_0\rangle +R_f)$. Then, the stochastic gradient $H: \R^{\widetilde{d}} \times \R^m \rightarrow \R^p$ of $v$ defined in \eqref{eq:dpptlobjv} is of the form $H(\widetilde{\theta}, x) := G(\widetilde{\theta}, x)+ F(\widetilde{\theta}, x)$ for all $\widetilde{\theta} \in \R^{\widetilde{d}}$ and all $x \in \R^m$, $m=p+1$, with $x = (r_0,z)$, $r_0=(r_0^1, \dots, r_0^p) \in \R^p, z \in \R$, where the functions $F$ and $G$ are given by
\begin{align}\label{eq:fixedfgexp1}
\begin{split}
F(\widetilde{\theta},x) 	:= \eta \widetilde{\theta}|\widetilde{\theta}|^{2r}, \quad G(\widetilde{\theta},x) 	:=\left(G_{\widetilde{K}_1^{11}}(\widetilde{\theta},x), \dots, G_{\widetilde{K}_1^{p\nu}}(\widetilde{\theta},x),  G_{\widetilde{b}^1}(\widetilde{\theta},x), \dots, G_{\widetilde{b}^{\nu}}(\widetilde{\theta},x) \right),
\end{split}
\end{align}
where for $I = 1, \dots, p, J = 1, \dots, \nu$,
\begin{align}\label{eq:fixedfgexp2}
\begin{split}
G_{\widetilde{K}_1^{IJ}}(\widetilde{\theta},x) 	&:= 2w_1^{\mathfrak{N}}\partial_{\widetilde{K}_1^{IJ}}w_1^{\mathfrak{N}}\left(\sum_{i=1}^p(\widetilde{g}_1^i(w_1^{\mathfrak{N}}))^2\overline{\overline{r}}_1^{ii}+\sum_{\substack{i\neq j\\ i,j=1}}^p\widetilde{g}_1^i(w_1^{\mathfrak{N}})\widetilde{g}_1^j(w_1^{\mathfrak{N}})\overline{\overline{r}}_1^{ii} +2\sum_{i=1}^p\widetilde{g}_1^i(w_1^{\mathfrak{N}})\overline{r}_1^iR_f+R_f^2\right)\\
&\quad +(w_1^{\mathfrak{N}})^2\left(2\sum_{i=1}^p\widetilde{g}_1^i(w_1^{\mathfrak{N}})\partial_{\widetilde{K}_1^{IJ}}\widetilde{g}_1^i(w_1^{\mathfrak{N}})\overline{\overline{r}}_1^{ii}+\sum_{\substack{i\neq j\\ i,j=1}}^p\partial_{\widetilde{K}_1^{IJ}}\widetilde{g}_1^i(w_1^{\mathfrak{N}})\widetilde{g}_1^j(w_1^{\mathfrak{N}})\overline{\overline{r}}_1^{ii} \right.\\
&\qquad \left.+\sum_{\substack{i\neq j\\ i,j=1}}^p\widetilde{g}_1^i(w_1^{\mathfrak{N}})\partial_{\widetilde{K}_1^{IJ}}\widetilde{g}_1^j(w_1^{\mathfrak{N}})\overline{\overline{r}}_1^{ii}+2\sum_{i=1}^p\partial_{\widetilde{K}_1^{IJ}}\widetilde{g}_1^i(w_1^{\mathfrak{N}})\overline{r}_1^iR_f\right) \\
&\quad-\left(\partial_{\widetilde{K}_1^{IJ}} w_1^{\mathfrak{N}}\gamma\left(\sum_{i=1}^p\widetilde{g}_1^i(w_1^{\mathfrak{N}})\overline{r}_1^i+ R_f\right)+w_1^{\mathfrak{N}}\gamma\sum_{i=1}^p\partial_{\widetilde{K}_1^{IJ}}\widetilde{g}_1^i(w_1^{\mathfrak{N}})\overline{r}_1^i\right) ,\\
G_{\widetilde{b}_0^J}(\widetilde{\theta},x) 		&:=2w_1^{\mathfrak{N}}\partial_{\widetilde{b}_0^J}w_1^{\mathfrak{N}}\left(\sum_{i=1}^p(\widetilde{g}_1^i(w_1^{\mathfrak{N}}))^2\overline{\overline{r}}_1^{ii}+\sum_{\substack{i\neq j\\ i,j=1}}^p\widetilde{g}_1^i(w_1^{\mathfrak{N}})\widetilde{g}_1^j(w_1^{\mathfrak{N}})\overline{\overline{r}}_1^{ii} +2\sum_{i=1}^p\widetilde{g}_1^i(w_1^{\mathfrak{N}})\overline{r}_1^iR_f+R_f^2\right)\\
&\quad +(w_1^{\mathfrak{N}})^2\left(2\sum_{i=1}^p\widetilde{g}_1^i(w_1^{\mathfrak{N}})\partial_{\widetilde{b}_0^J}\widetilde{g}_1^i(w_1^{\mathfrak{N}})\overline{\overline{r}}_1^{ii}+\sum_{\substack{i\neq j\\ i,j=1}}^p\partial_{\widetilde{b}_0^J}\widetilde{g}_1^i(w_1^{\mathfrak{N}})\widetilde{g}_1^j(w_1^{\mathfrak{N}})\overline{\overline{r}}_1^{ii} \right.\\
&\qquad \left.+\sum_{\substack{i\neq j\\ i,j=1}}^p\widetilde{g}_1^i(w_1^{\mathfrak{N}})\partial_{\widetilde{b}_0^J}\widetilde{g}_1^j(w_1^{\mathfrak{N}})\overline{\overline{r}}_1^{ii}+2\sum_{i=1}^p\partial_{\widetilde{b}_0^J}\widetilde{g}_1^i(w_1^{\mathfrak{N}})\overline{r}_1^iR_f\right) \\
&\quad-\left(\partial_{\widetilde{b}_0^J} w_1^{\mathfrak{N}}\gamma\left(\sum_{i=1}^p\widetilde{g}_1^i(w_1^{\mathfrak{N}})\overline{r}_1^i+ R_f\right)+w_1^{\mathfrak{N}}\gamma\sum_{i=1}^p\partial_{\widetilde{b}_0^J}\widetilde{g}_1^i(w_1^{\mathfrak{N}})\overline{r}_1^i\right) ,\\
\end{split}
\end{align}
and where
\begin{align}\label{eq:fixedfgexp3}
\begin{split}
\partial_{\widetilde{K}_1^{IJ}}w_1^{\mathfrak{N}}&:= zr_0^I\sech^2\left(\sum_{j = 1}^\nu \widetilde{K}_1^{Ij}\sigma (c^jz+\widetilde{b}_0^j)\right)(c^Jz+\widetilde{b}_0^J)\1_{A_J}(z), \quad \partial_{\widetilde{K}_1^{IJ}}\widetilde{g}_1(w_1^{\mathfrak{N}}): = \widetilde{g}_1'(w_1^{\mathfrak{N}})\partial_{\widetilde{K}_1^{IJ}}w_1^{\mathfrak{N}}\\
\partial_{\widetilde{b}_0^J}w_1^{\mathfrak{N}}&:=z\sum_{i = 1}^p\sech^2\left(\sum_{j = 1}^\nu \widetilde{K}_1^{ij}\sigma (c^jz+\widetilde{b}_0^j)\right)r_0^i  \widetilde{K}_1^{iJ}\1_{A_J}(z), \quad \partial_{\widetilde{b}_0^J}\widetilde{g}_1(w_1^{\mathfrak{N}}): = \widetilde{g}_1'(w_1^{\mathfrak{N}})\partial_{\widetilde{b}_0^J}w_1^{\mathfrak{N}}
\end{split}
\end{align}
with
\begin{equation}\label{eq:fixedfgexp4}
A_J := \{z\in \R |c^{J } z+\widetilde{b}_0^J\geq 0\},
\end{equation}
and $\widetilde{g}_1'(w_1^{\mathfrak{N}})$ denoting the derivative of $\widetilde{g}_1(y)$ w.r.t. $y$ composed with $w_1^{\mathfrak{N}}$. Then, by using \cite[Proposition 3.1]{lim2021nonasymptotic}, one can show that \eqref{eq:optim_tl_reg} satisfies Assumptions \ref{asm:AI}-\ref{asm:AC}, and thus Theorem \ref{thm:convergencew1}, Corollary \ref{thm:convergencew2}, and Theorem \ref{thm:opeer} can be applied to the optimization problem \eqref{eq:optim_tl_reg}. Indeed, we first note that the conditions imposed on $x:=(r_0,z)\in \R^m$ in \cite[Proposition 3.1]{lim2021nonasymptotic} can be satisfied for a wide range of distributions. For example, $X:=(R_0,Z)$ with $R_0$ and $Z$ being independent, $R_0$ following a (three-parameter) log-normal distribution (see \cite{sangal19703}), and $Z$ following a uniform distribution is one of the valid choices which is used in our numerical experiments. Furthermore, the stochastic gradient of $v$ defined in \eqref{eq:dpptlobjv} are given explicitly in \eqref{eq:fixedfgexp1}-\eqref{eq:fixedfgexp4}, which has a  similar form as that specified in \cite[Proposition 3.1 (20)-(21)]{lim2021nonasymptotic}. By setting $q=2, r=1, \rho =7$, following the same arguments as in the proof of \cite[Proposition 3.1]{lim2021nonasymptotic} yields the desired result.

\end{proof}

\subsection{Proof of auxiliary results in Section \ref{sec:mtproofs}}
\begin{lemma}\label{lem:GlaFlabd}
Let Assumption \ref{asm:AF} hold. Then, for any $\theta \in \R^d$, $x \in \R^m$, $0<\lambda \leq 1$, $i = 1, \dots, d$, one obtains the following estimates for $G_{\lambda}^{(i)}$ and $F_{\lambda}^{(i)}$ given in \eqref{eq:expressiontGF}:
\begin{align}
|G_\lambda^{(i)}(\theta,x)|\leq 2\lambda^{-1/2}, \quad |F_\lambda^{(i)}(\theta,x)|\leq \lambda^{-1/2}(1+|x|)^{\rho}\left(K_F+K_F|\theta^{(i)}|\right) \label{eq:Flabd}.
\end{align}
\end{lemma}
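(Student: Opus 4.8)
The plan is to prove each of the two estimates in \eqref{eq:Flabd} separately by direct algebraic manipulation, exploiting the assumption $0<\lambda\leq 1$ (so that $\sqrt{\lambda}\leq 1$ and hence $\sqrt{\lambda}\leq \lambda^{-1/2}$) together with the growth bound on $F^{(i)}$ from Assumption~\ref{asm:AF}.

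For the bound on $G_\lambda^{(i)}$, I would abbreviate $g:=|G^{(i)}(\theta,x)|\geq 0$ and expand the defining expression \eqref{eq:expressiontGF} as a sum of two nonnegative terms:
\[
|G_\lambda^{(i)}(\theta,x)| = \frac{g}{1+\sqrt{\lambda}\,g} + \frac{\sqrt{\lambda}\,g}{(1+\sqrt{\lambda}\,g)(\varepsilon+g)}.
\]
The first term is controlled by $\tfrac{g}{1+\sqrt{\lambda}\,g}\leq \tfrac{g}{\sqrt{\lambda}\,g}=\lambda^{-1/2}$ (and equals $0$ when $g=0$). For the second term, the key observation is that the boosting factor does not depend badly on $\varepsilon$: since $\varepsilon>0$ one has $\tfrac{g}{\varepsilon+g}\leq 1$, and since $\tfrac{\sqrt{\lambda}}{1+\sqrt{\lambda}\,g}\leq \sqrt{\lambda}$, the second term is at most $\sqrt{\lambda}\leq \lambda^{-1/2}$ by $\lambda\leq 1$. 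Adding the two bounds yields $|G_\lambda^{(i)}(\theta,x)|\leq \lambda^{-1/2}+\sqrt{\lambda}\leq 2\lambda^{-1/2}$, as required.

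For the bound on $F_\lambda^{(i)}$, I would first apply the growth estimate in Assumption~\ref{asm:AF}, namely $|F^{(i)}(\theta,x)|\leq K_F(1+|x|)^{\rho}(1+|\theta^{(i)}|)(1+|\theta|^{2r})$, to the numerator of \eqref{eq:expressiontGF}. It then remains to control the ratio $\tfrac{1+|\theta|^{2r}}{1+\sqrt{\lambda}|\theta|^{2r}}$. Here I would use $\sqrt{\lambda}\leq 1$ to obtain $1+\sqrt{\lambda}|\theta|^{2r}\geq \sqrt{\lambda}(1+|\theta|^{2r})$, which gives $\tfrac{1+|\theta|^{2r}}{1+\sqrt{\lambda}|\theta|^{2r}}\leq \lambda^{-1/2}$. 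Combining this with the growth bound produces
\[
|F_\lambda^{(i)}(\theta,x)|\leq \lambda^{-1/2}K_F(1+|x|)^{\rho}(1+|\theta^{(i)}|)=\lambda^{-1/2}(1+|x|)^{\rho}\bigl(K_F+K_F|\theta^{(i)}|\bigr),
\]
which is precisely the second inequality in \eqref{eq:Flabd}.

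Both estimates are elementary, so I do not anticipate a genuine obstacle; the only point requiring care is the treatment of the boosting function in $G_\lambda^{(i)}$, where one must notice that the potentially small parameter $\varepsilon$ is harmless because $\tfrac{g}{\varepsilon+g}\leq 1$ holds uniformly in $\varepsilon>0$, so no dependence on $\varepsilon$ enters the final constant.
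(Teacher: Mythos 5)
Your proposal is correct and follows essentially the same route as the paper, which simply asserts both bounds by the same direct algebraic manipulations (splitting $G_\lambda^{(i)}$ into the tamed term and the boosted term, and using the growth bound on $F^{(i)}$ together with $1+\sqrt{\lambda}|\theta|^{2r}\geq\sqrt{\lambda}(1+|\theta|^{2r})$). Your write-up supplies the intermediate steps the paper omits, and all of them check out, including the observation that the $\varepsilon$-dependence cancels via $g/(\varepsilon+g)\leq 1$.
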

\begin{proof} For any $\theta \in \R^d$, $x \in \R^m$, $0<\lambda \leq 1$, $i = 1, \dots, d$, by using the expression of $G_{\lambda}^{(i)}(\theta, x)$ given in \eqref{eq:expressiontGF}, one obtains
\begin{align*}
\left|G_\lambda^{(i)}(\theta,x)\right|
& =\left|\frac{G^{(i)}(\theta,x)}{1+\sqrt{\lambda}|G^{(i)}(\theta,x)|}\left(1+\frac{\sqrt{\lambda}}{\varepsilon+|G^{(i)}(\theta,x)|}\right)\right|
\leq 2\lambda^{-1/2}.
\end{align*}
Furthermore, by using the expression of $F_{\lambda}^{(i)}$ given in \eqref{eq:expressiontGF}, Assumption \ref{asm:AF}, and $0<\lambda \leq 1$, one obtains
\begin{align*}
\left|F_\lambda^{(i)}(\theta, x)\right|
 = \left|\frac{F^{(i)}(\theta,x)}{1+\sqrt{\lambda}|\theta|^{2r}}\right| 
\leq  \lambda^{-1/2}(1+|x|)^{\rho}\left(K_F+K_F|\theta^{(i)}|
\right) .
\end{align*}
\end{proof}
\begin{proof}[\textbf{Proof of Lemma \ref{lem:2ndpthmmt}-\ref{lem:2ndpthmmti}}]\label{proof2ndpthmmt}
The following inequality will be applied throughout the proof: for any $z\geq 1$, $l \in \N$, $a_i\geq 0$, $i= 1, \dots, l $,
\begin{equation}\label{eq:fundamentalineq}
\left(\sum_{i = 1}^l a_i\right)^z \leq l^{z-1} \sum_{i = 1}^l a_i^z .
\end{equation}
Recall the continuous-time interpolation of e-TH$\varepsilon$O POULA given in \eqref{eq:theopoulaproc}. Throughout the proof, let $0<\lambda\leq \lambda_{1, \max}$ with $\lambda_{1, \max}$ defined in \eqref{eq:stepsizemax}, $n \in \N_0$, and $t\in (n, n+1]$. We denote by
\begin{align}\label{eq:delxinotation}
\begin{split}
\Delta_{n,t}^{\lambda}
 := \bar{\theta}^{\lambda}_n - \lambda H_{\lambda}(\bar{\theta}^{\lambda}_n, X_{n+1})(t-n),\quad
\Xi_{n,t}^{\lambda}
 := \sqrt{2\lambda \beta^{-1}}(B_t^{\lambda} - B_n^{\lambda}).
\end{split}
\end{align}
Then, one observes that
\begin{equation}\label{eq:2ndmmtexp}
\E\left[\left.|\bar{\theta}^{\lambda}_t|^2\right|\bar{\theta}^{\lambda}_n \right]  = \E\left[\left.|\Delta_{n,t}^{\lambda}|^2\right|\bar{\theta}^{\lambda}_n \right] +2\lambda (t-n)d/\beta.
\end{equation}
To obtain an upper bound for the first term on the RHS of \eqref{eq:2ndmmtexp}, note first that by \eqref{eq:delxinotation},
\begin{align}\label{eq:deltasquaredexp}
|\Delta_{n,t}^{\lambda}|^2
= |\bar{\theta}^{\lambda}_n|^2- 2\lambda(t-n) \left\langle \bar{\theta}^{\lambda}_n, H_{\lambda}(\bar{\theta}^{\lambda}_n, X_{n+1}) \right\rangle   +\lambda^2(t-n)^2|H_{\lambda}(\bar{\theta}^{\lambda}_n, X_{n+1})|^2.
\end{align}
One further calculates, by using \eqref{eq:expressiontH}, \eqref{eq:expressiontGF}, $0<\lambda\leq \lambda_{1, \max}\leq 1$, that
\begin{align}
-\left\langle \bar{\theta}^{\lambda}_n, H_{\lambda}(\bar{\theta}^{\lambda}_n, X_{n+1}) \right\rangle 
& = -\sum_{i = 1}^d \bar{\theta}^{\lambda,(i)}_n \left(  F_\lambda^{(i)}(\bar{\theta}^{\lambda}_n, X_{n+1})  + G_\lambda^{(i)}(\bar{\theta}^{\lambda}_n, X_{n+1})  \right)\nonumber\\
&\leq -\frac{ \langle \bar{\theta}^{\lambda}_n, F(\bar{\theta}^{\lambda}_n, X_{n+1})\rangle  }{1+\sqrt{\lambda}|\bar{\theta}^{\lambda}_n|^{2r}}  +\sum_{i = 1}^d\frac{ |\bar{\theta}^{\lambda,(i)}_n ||G^{(i)}(\bar{\theta}^{\lambda}_n, X_{n+1}) |(1+\sqrt{\lambda}|\bar{\theta}^{\lambda}_n|^{2r}) }{(1+\sqrt{\lambda}|G^{(i)}(\bar{\theta}^{\lambda}_n, X_{n+1}) |)(1+\sqrt{\lambda}|\bar{\theta}^{\lambda}_n|^{2r})} +d|\bar{\theta}^{\lambda}_n|.\nonumber
\end{align}
Then, by using Assumption \ref{asm:AG}, one obtains
\begin{align}
-\left\langle \bar{\theta}^{\lambda}_n, H_{\lambda}(\bar{\theta}^{\lambda}_n, X_{n+1}) \right\rangle
&\leq -\frac{ \langle \bar{\theta}^{\lambda}_n, F(\bar{\theta}^{\lambda}_n, X_{n+1})\rangle  }{1+\sqrt{\lambda}|\bar{\theta}^{\lambda}_n|^{2r}}+ d|\bar{\theta}^{\lambda}_n|
 + \frac{ dK_G(1+|X_{n+1}|)^{\rho}(1+|\bar{\theta}^{\lambda}_n|)^{q+1}}{{1+\sqrt{\lambda}|\bar{\theta}^{\lambda}_n|^{2r}}} + \frac{ d|\bar{\theta}^{\lambda}_n|^{2r+1}}{1+\sqrt{\lambda}|\bar{\theta}^{\lambda}_n|^{2r}}, \nonumber
\end{align}
which, by using $1\leq q \leq 2r$, and \eqref{eq:fundamentalineq} (with $l \leftarrow 2$, $z \leftarrow 2r+1$), yields
\begin{align}
\begin{split}\label{eq:deltasqp2ub}
-\left\langle \bar{\theta}^{\lambda}_n, H_{\lambda}(\bar{\theta}^{\lambda}_n, X_{n+1}) \right\rangle
&\leq  -\frac{ \langle \bar{\theta}^{\lambda}_n, F(\bar{\theta}^{\lambda}_n, X_{n+1})\rangle  }{1+\sqrt{\lambda}|\bar{\theta}^{\lambda}_n|^{2r}} +d|\bar{\theta}^{\lambda}_n| \\
&\quad + 2^{2r}dK_G(1+|X_{n+1}|)^{\rho} +\frac{ d(2^{2r} K_G(1+|X_{n+1}|)^{\rho} +1)|\bar{\theta}^{\lambda}_n|^{2r+1}}{1+\sqrt{\lambda}|\bar{\theta}^{\lambda}_n|^{2r}}.
\end{split}
\end{align}
Furthermore, by using \eqref{eq:expressiontH}, Lemma \ref{lem:GlaFlabd}, and \eqref{eq:fundamentalineq}, one notes that  
\begin{align}
\begin{split}\label{eq:deltasqp3ub}
|H_{\lambda}(\bar{\theta}^{\lambda}_n, X_{n+1})|^2
&\leq d\lambda^{-1}(4+4(1+|X_{n+1}|)^{\rho}K_F)+ 4d\lambda^{-1}(1+|X_{n+1}|)^{\rho}K_F|\bar{\theta}^{\lambda}_n| \\
&\quad +3d\lambda^{-1}(1+|X_{n+1}|)^{2\rho} K_F^2 +\frac{3 \lambda^{-1}(1+|X_{n+1}|)^{2\rho}K_F^2|\bar{\theta}^{\lambda}_n|^2}{(1+\sqrt{\lambda}|\bar{\theta}^{\lambda}_n|^{2r})^2}\\
&\quad+\frac{3 (1+|X_{n+1}|)^{2\rho}K_F^2|\bar{\theta}^{\lambda}_n|^{4r+2}}{(1+\sqrt{\lambda}|\bar{\theta}^{\lambda}_n|^{2r})^2}.
\end{split}
\end{align}
Substituting \eqref{eq:deltasqp2ub}, \eqref{eq:deltasqp3ub} into \eqref{eq:deltasquaredexp} yields
\begin{align}
\begin{split}\label{eq:deltasquaredub1}
|\Delta_{n,t}^{\lambda}|^2
&\leq  |\bar{\theta}^{\lambda}_n|^2  -\frac{  2\lambda(t-n)  \langle \bar{\theta}^{\lambda}_n, F(\bar{\theta}^{\lambda}_n, X_{n+1})\rangle  }{1+\sqrt{\lambda}|\bar{\theta}^{\lambda}_n|^{2r}}   \\
&\quad + \lambda(t-n) d(1+|X_{n+1}|)^{2\rho} ( 2^{2r+1} K_G+4+4K_F+3K_F^2)  \\
&\quad + 4\lambda(t-n) d(1+|X_{n+1}|)^{ \rho} (1+K_F)|\bar{\theta}^{\lambda}_n|\\
&\quad +\frac{2\lambda(t-n)  d(2^{2r} K_G(1+|X_{n+1}|)^{\rho} +1)|\bar{\theta}^{\lambda}_n|^{2r+1}}{1+\sqrt{\lambda}|\bar{\theta}^{\lambda}_n|^{2r}}  \\
&\quad  +\frac{3\lambda(t-n)  (1+|X_{n+1}|)^{ 2\rho}K_F^2(1+|\bar{\theta}^{\lambda}_n|^{2r+1})}{ 1+\sqrt{\lambda}|\bar{\theta}^{\lambda}_n|^{2r} }\\
&\quad+\frac{3\lambda^2(t-n)^2 (1+|X_{n+1}|)^{2 \rho}K_F^2|\bar{\theta}^{\lambda}_n|^{4r+2}}{(1+\sqrt{\lambda}|\bar{\theta}^{\lambda}_n|^{2r})^2},
\end{split}
\end{align}
where the inequality holds due to $0< t-n \leq 1$ and $a^2 \leq 1+ a^{2r+1}$, for $a \geq 0$. 
Moreover, one observes that, for any $\theta \in \R^d$,
\begin{equation}\label{eq:theta2r1ineq}
|\theta| = \frac{ |\theta| (1+\sqrt{\lambda}|\theta|^{2r} )}{ 1+\sqrt{\lambda}|\theta|^{2r} } \leq  \frac{ |\theta|  +|\theta|^{2r+1}  }{ 1+\sqrt{\lambda}|\theta|^{2r} } \leq \frac{ 1  +2|\theta|^{2r+1}  }{ 1+\sqrt{\lambda}|\theta|^{2r} } \leq 1+ \frac{ 2|\theta|^{2r+1}  }{ 1+\sqrt{\lambda}|\theta|^{2r} }.
\end{equation}
Applying \eqref{eq:theta2r1ineq} to \eqref{eq:deltasquaredub1} and using $0< t-n \leq 1$ yield,
\begin{align}\label{eq:deltasquaredub2}
\begin{split}
|\Delta_{n,t}^{\lambda}|^2
&\leq  |\bar{\theta}^{\lambda}_n|^2-\frac{  2\lambda(t-n)  \langle \bar{\theta}^{\lambda}_n, F(\bar{\theta}^{\lambda}_n, X_{n+1})\rangle  }{1+\sqrt{\lambda}|\bar{\theta}^{\lambda}_n|^{2r}}    \\
&\quad +  \lambda(t-n) d(1+|X_{n+1}|)^{2\rho} ( 2^{2r+1} K_G+8+8K_F+6K_F^2) \\
&\quad  +\frac{\lambda(t-n)  d(1+|X_{n+1}|)^{2\rho}(2^{2r+1} K_G +10+8K_F+3K_F^2)|\bar{\theta}^{\lambda}_n|^{2r+1}}{1+\sqrt{\lambda}|\bar{\theta}^{\lambda}_n|^{2r}}   \\
&\quad+\frac{3\lambda^2(t-n) (1+|X_{n+1}|)^{2\rho}K_F^2|\bar{\theta}^{\lambda}_n|^{4r+2}}{(1+\sqrt{\lambda}|\bar{\theta}^{\lambda}_n|^{2r})^2}.
\end{split}
\end{align}
By taking conditional expectation on both sides, by using Remark \ref{rem:Fhdissiposl}, and by the fact that $X_{n+1}$ is independent of $\bar{\theta}^{\lambda}_n$, the above result yields,
\begin{align}
\E\left[\left.|\Delta_{n,t}^{\lambda}|^2\right|\bar{\theta}^{\lambda}_n \right]
&\leq |\bar{\theta}^{\lambda}_n|^2- \frac{2\lambda(t-n) a_F |\bar{\theta}^{\lambda}_n|^{2r+2}}{1+\sqrt{\lambda}|\bar{\theta}^{\lambda}_n|^{2r}} \nonumber\\
&\quad +\lambda(t-n) d\E[(1+|X_0|)^{2\rho}] ( 2b_F+2^{2r+1} K_G+8+8K_F+6K_F^2) \nonumber\\
&\quad + \frac{ \lambda(t-n)d\E[(1+|X_0|)^{2\rho}](2^{2r+1}K_G+10+8K_F+3K_F^2)|\bar{\theta}^{\lambda}_n|^{2r+1}}{{1+\sqrt{\lambda}|\bar{\theta}^{\lambda}_n|^{2r}} } \nonumber\\
&\quad + \frac{3\lambda^2 (t-n)\E[(1+|X_0|)^{2\rho}]K_F^2|\bar{\theta}^{\lambda}_n|^{4r+2}}{(1+\sqrt{\lambda}|\bar{\theta}^{\lambda}_n|^{2r})^2}  \nonumber\\
\begin{split}\label{eq:2ndmmtue1}
& =  |\bar{\theta}^{\lambda}_n|^2-\lambda(t-n) T_1(\bar{\theta}^{\lambda}_n)|\bar{\theta}^{\lambda}_n|^2- \lambda(t-n) T_2(\bar{\theta}^{\lambda}_n)\\
&\quad +\lambda(t-n) d\E[(1+|X_0|)^{2\rho}] ( 2b_F+2^{2r+1} K_G+8+8K_F+6K_F^2) ,
\end{split}
\end{align}
where for any $\theta \in \R^d\setminus \{(0,\dots, 0)_d\}$,
$
T_1(\theta)  := \frac{ a_F |\theta|^{2r+2}-d\E[(1+|X_0|)^{2\rho}](2^{2r+1}K_G+10+8K_F+3K_F^2)|\theta|^{2r+1}}{|\theta|^2(1+\sqrt{\lambda}|\theta|^{2r})} ,
$ 
and for any $\theta \in \R^d$,
$
T_2(\theta)  := \frac{  a_F |\theta|^{2r+2}}{1+\sqrt{\lambda}|\theta|^{2r}} - \frac{3\lambda \E[(1+|X_0|)^{2\rho}] K_F^2|\theta|^{4r+2}}{(1+\sqrt{\lambda}|\theta|^{2r})^2}.
$ 
One observes that $T_1(\theta) >\frac{  a_F |\theta|^{2r}}{2(1+\sqrt{\lambda}|\theta|^{2r})}$ implies $|\theta| > \frac{2d\E[(1+|X_0|)^{2\rho}](2^{2r+1}K_G+10+8K_F+3K_F^2)}{a_F }$.
Then, denote by $M_0 : = 2d\E[(1+|X_0|)^{2\rho}](2^{2r+1}K_G+10+8K_F+3K_F^2)/\min{\{1, a_F \}}$. The above calculation and the fact that $f(s) := s/(1+\sqrt{\lambda}s)$ is non-decreasing for all $s\geq 0$ imply that, for any $|\theta|>M_0$,
\begin{equation}\label{eq:2ndmmtleT1}
T_1(\theta)>\frac{ a_F |\theta|^{2r}}{2(1+\sqrt{\lambda}|\theta|^{2r})} >\frac{ a_F M_0^{2r}}{2(1+\sqrt{\lambda}M_0^{2r})}\geq a_F \kappa,
\end{equation}
where $\kappa: =M_0^{2r}/(2(1+ M_0^{2r}))$. In addition, for any $\theta \in \R^d$, one notes that
$
T_2(\theta) 
\geq ( \sqrt{\lambda}a_F |\theta|^{4r+2} - 3\lambda \E[(1+|X_0|)^{2\rho}]K_F^2|\theta|^{4r+2})/(1+\sqrt{\lambda}|\theta|^{2r})^2 \geq 0
$ 
since $0 <\lambda \leq \lambda_{1, \max}\leq a_F^2 /(9 (\E[(1+|X_0|)^{2\rho}])^2K_F^4 )$ by the definition of $\lambda_{1, \max}$ in \eqref{eq:stepsizemax}. 
Therefore, we obtain that
\begin{equation}\label{eq:2ndmmtleT2}
T_2(\theta)\geq 0.
\end{equation}
Denote by $\mathsf{S}_{n,M_0} := \{\omega \in \Omega: |\bar{\theta}^{\lambda}_n(\omega)| >M_0\}$. Then, by using \eqref{eq:2ndmmtue1}, \eqref{eq:2ndmmtleT1}, \eqref{eq:2ndmmtleT2}, we have that 
\begin{equation}\label{eq:2ndmmtue2}
\E\left[\left.|\Delta_{n,t}^{\lambda}|^2\1_{\mathsf{S}_{n,M_0} }\right|\bar{\theta}^{\lambda}_n \right]
 \leq  (1-\lambda(t-n)  a_F \kappa)|\bar{\theta}^{\lambda}_n|^2 \1_{\mathsf{S}_{n,M_0} }+\lambda(t-n)c_1\1_{\mathsf{S}_{n,M_0} },
\end{equation}
where $c_1 := d\E[(1+|X_0|)^{2\rho}] ( 2b_F+2^{2r+1} K_G+8+8K_F+6K_F^2)$. Moreover, by using \eqref{eq:2ndmmtue1}, the expression of $T_1(\theta)$ for $\theta \in \R^d\setminus \{(0,\dots, 0)_d\}$, and \eqref{eq:2ndmmtleT2}, one notes that 
\begin{align}\label{eq:2ndmmtue3}
\begin{split}
 \E\left[\left.|\Delta_{n,t}^{\lambda}|^2\1_{\mathsf{S}_{n,M_0}^{\mathsf{c}} }\right|\bar{\theta}^{\lambda}_n \right]
 &\leq  (1-\lambda(t-n)  a_F \kappa)|\bar{\theta}^{\lambda}_n|^2\1_{\mathsf{S}_{n,M_0}^{\mathsf{c}} } +\lambda(t-n)\left(c_1 +  a_F \kappa M_0^2\right.\\
&\quad \left.+d\E[(1+|X_0|)^{2\rho}](2^{2r+1}K_G+10+8K_F+3K_F^2)M_0^{2r+1}\right)\1_{\mathsf{S}_{n,M_0}^{\mathsf{c}} }.
 \end{split}
\end{align}
By using \eqref{eq:2ndmmtue2}, \eqref{eq:2ndmmtue3}, one obtains that 
\begin{align}\label{eq:2ndmmtue4}
\E\left[\left.|\Delta_{n,t}^{\lambda}|^2\right|\bar{\theta}^{\lambda}_n \right]
& = \E\left[\left.|\Delta_{n,t}^{\lambda}|^2(\1_{\mathsf{S}_{n,M_0} }+\1_{\mathsf{S}_{n,M_0}^{\mathsf{c}} })\right|\bar{\theta}^{\lambda}_n \right]
\leq  (1-\lambda(t-n)  a_F\kappa)|\bar{\theta}^{\lambda}_n|^2+\lambda(t-n)c_2,
\end{align}
where $c_2 := c_1 + a_F \kappa M_0^2+d\E[(1+|X_0|)^{2\rho}](2^{2r+1}K_G+10+8K_F+3K_F^2)M_0^{2r+1}$. Substituting \eqref{eq:2ndmmtue4} into \eqref{eq:2ndmmtexp} yields 
$\E\left[\left.|\bar{\theta}^{\lambda}_t|^2\right|\bar{\theta}^{\lambda}_n \right]  =(1-\lambda(t-n)  a_F \kappa)|\bar{\theta}^{\lambda}_n|^2+\lambda(t-n)c_0$, 
where
\begin{align}\label{eq:2ndmmtconstc0}
\begin{split}
\kappa &: =M_0^{2r}/(2(1+ M_0^{2r})),\\
M_0 &: =  2d\E[(1+|X_0|)^{2\rho}](2^{2r+1}K_G+10+8K_F+3K_F^2)/\min{\{1, a_F \}},\\
c_0& := 2d/\beta +d\E[(1+|X_0|)^{2\rho}] ( 2b_F+2^{2r+1} K_G+8+8K_F+6K_F^2)\\
&\quad +  a_F \kappa M_0^2+d\E[(1+|X_0|)^{2\rho}](2^{2r+1}K_G+10+8K_F+3K_F^2)M_0^{2r+1}.
\end{split}
\end{align}
By induction, one concludes that
\begin{align}\label{eq:2ndmmtinduction}
\begin{split}
\E\left[ |\bar{\theta}^{\lambda}_t|^2  \right]
& \leq (1-\lambda(t-n)  a_F \kappa)(1-\lambda  a_F \kappa)^n\E\left[ |\theta_0|^2 \right]+\mathring{c}_0,
\end{split}
\end{align}
where $\mathring{c}_0:=c_0(1+1/( a_F \kappa))$.
\end{proof}
\begin{proof}[\textbf{Proof of Lemma \ref{lem:2ndpthmmt}-\ref{lem:2ndpthmmtii}}]

Let $p \in [2, \infty) \cap \N$, $0 <\lambda \leq \lambda_{p, \max}$ with $\lambda_{p, \max}$ given in \eqref{eq:stepsizemax}, $n \in \N_0$, and $t\in (n, n+1]$. Recall the explicit expression of e-TH$\varepsilon$O POULA given in \eqref{eq:theopoulaproc}, and the definitions of $\Delta_{n,t}^{\lambda}$, $\Xi_{n,t}^{\lambda}$ given in \eqref{eq:delxinotation}. By using \cite[Lemma A.3]{nonconvex}, straightforward calculations yield
\begin{align}
\begin{split}
\E\left[\left.|\bar{\theta}^{\lambda}_t|^{2p}\right|\bar{\theta}^{\lambda}_n \right]
& \leq \E\left[\left.|\Delta_{n,t}^{\lambda}|^{2p}\right|\bar{\theta}^{\lambda}_n \right]  + 2p \E\left[\left.|\Delta_{n,t}^{\lambda}|^{2p-2} \langle \Delta_{n,t}^{\lambda}, \Xi_{n,t}^{\lambda} \rangle\right|\bar{\theta}^{\lambda}_n \right]  \\
&\quad + \sum_{k = 2}^{2p}\binom{2p}{k}\E\left[\left.|\Delta_{n,t}^{\lambda}|^{2p-k} |\Xi_{n,t}^{\lambda}|^k \right|\bar{\theta}^{\lambda}_n \right]
\end{split}\label{eq:2pthmmt0}\\
& =  \E\left[\left.|\Delta_{n,t}^{\lambda}|^{2p}\right|\bar{\theta}^{\lambda}_n \right]
+\sum_{k = 2}^{2p}\binom{2p}{k}\E\left[\left.|\Delta_{n,t}^{\lambda}|^{2p-k} |\Xi_{n,t}^{\lambda}|^k \right|\bar{\theta}^{\lambda}_n \right] \label{eq:2pthmmt1} ,
\end{align}
where the last equality holds due to the fact that the second term in \eqref{eq:2pthmmt0} is zero. The second term on the RHS of \eqref{eq:2pthmmt1} can be further estimated as
\begin{align}\label{eq:2pthmmt2}
&\sum_{k = 2}^{2p}\binom{2p}{k}\E\left[\left.|\Delta_{n,t}^{\lambda}|^{2p-k} |\Xi_{n,t}^{\lambda}|^k \right|\bar{\theta}^{\lambda}_n \right]\nonumber\\
&\leq 2^{2p-2}p(2p-1)\lambda (t-n)d \beta^{-1}\E\left[\left. |\Delta_{n,t}^{\lambda}|^{2p-2}   \right|\bar{\theta}^{\lambda}_n \right]
+2^{2p-4}(2p(2p-1))^{p+1}(d\beta^{-1}\lambda (t-n) )^p,
\end{align}
where 
the inequality holds due to \eqref{eq:fundamentalineq}, the fact that $\Delta_{n,t}^{\lambda}$ is independent of $\Xi_{n,t}^{\lambda}$, and \cite[Theorem 7.1]{mao2007stochastic}. Substituting \eqref{eq:2pthmmt2} into \eqref{eq:2pthmmt1} yields
\begin{align}\label{eq:2pthmmt3}
\begin{split}
\E\left[\left.|\bar{\theta}^{\lambda}_t|^{2p}\right|\bar{\theta}^{\lambda}_n \right]
& \leq  \E\left[\left.|\Delta_{n,t}^{\lambda}|^{2p}\right|\bar{\theta}^{\lambda}_n \right]
+2^{2p-2}p(2p-1)\lambda (t-n)d \beta^{-1}\E\left[\left. |\Delta_{n,t}^{\lambda}|^{2p-2}   \right|\bar{\theta}^{\lambda}_n \right]   \\
&\quad +2^{2p-4}(2p(2p-1))^{p+1}(d\beta^{-1}\lambda (t-n) )^p .
\end{split}
\end{align}
Then, we proceed with establishing an upper estimate for the first term on the RHS of \eqref{eq:2pthmmt3}. By using \eqref{eq:delxinotation}, \cite[Lemma A.3]{nonconvex}, and by using the same arguments as in \eqref{eq:2pthmmt0}, one obtains
\begin{align}\label{eq:2pthmmt4}
\begin{split}
\E\left[\left.|\Delta_{n,t}^{\lambda}|^{2p}\right|\bar{\theta}^{\lambda}_n \right]
&\leq |\bar{\theta}^{\lambda}_n|^{2p} - 2p \lambda(t-n) |\bar{\theta}^{\lambda}_n|^{2p-2}\E\left[\left. \langle \bar{\theta}^{\lambda}_n,  H_{\lambda}(\bar{\theta}^{\lambda}_n, X_{n+1}) \rangle\right|\bar{\theta}^{\lambda}_n \right]   \\
&\quad + \sum_{k = 2}^{2p}\binom{2p}{k}\lambda^k(t-n)^k|\bar{\theta}^{\lambda}_n|^{2p-k} \E\left[\left.| H_{\lambda}(\bar{\theta}^{\lambda}_n, X_{n+1})|^k \right|\bar{\theta}^{\lambda}_n \right].
\end{split}
\end{align}
By Lemma \ref{lem:GlaFlabd}, and by applying \eqref{eq:fundamentalineq} (with $l \leftarrow 2$, $z \leftarrow 2$), one observes that 
\begin{align}\label{eq:GlaFlafullnormbd}
\begin{split}
|G_{\lambda}(\bar{\theta}^{\lambda}_n, X_{n+1})|^2 &\leq 4d \lambda^{-1}, \quad
|F_{\lambda}(\bar{\theta}^{\lambda}_n, X_{n+1})|^2 \leq 2\lambda^{-1}(1+|X_{n+1}|)^{2\rho}K_F^2(d+|\bar{\theta}^{\lambda}_n |^2).
\end{split}
\end{align}
For any $k = 2, \dots, 2p$, by using \eqref{eq:expressiontH}, \eqref{eq:fundamentalineq}, 
and \eqref{eq:GlaFlafullnormbd}, one obtains that
\begin{align}\label{eq:2pthmmtHlakpower}
\begin{split}
| H_{\lambda}(\bar{\theta}^{\lambda}_n, X_{n+1})|^k
&\leq 2^{2k-1}d^{k/2}\lambda^{-k/2}+2^{2k-2}d^{k/2}\lambda^{-k/2}(1+|X_{n+1}|)^{\rho k}K_F^k\\
&\quad +2^{2k-2}\lambda^{-k/2}(1+|X_{n+1}|)^{\rho k}K_F^k|\bar{\theta}^{\lambda}_n|^k.
\end{split}
\end{align}
Substituting \eqref{eq:deltasqp2ub}, \eqref{eq:2pthmmtHlakpower} into \eqref{eq:2pthmmt4} yields
\begin{align}\label{eq:2pthmmt5}
\begin{split}
\E\left[\left.|\Delta_{n,t}^{\lambda}|^{2p}\right|\bar{\theta}^{\lambda}_n \right]
& \leq |\bar{\theta}^{\lambda}_n|^{2p}
-  \frac{ 2p \lambda(t-n) a_F|\bar{\theta}^{\lambda}_n|^{2r+2p} }{ 1+\sqrt{\lambda}|\bar{\theta}^{\lambda}_n|^{2r}}   + 2p d\lambda(t-n) |\bar{\theta}^{\lambda}_n|^{2p-1}   \\
&\quad  +  pd \lambda(t-n)(2b_F +2^{2r+1}K_G \E\left[ (1+|X_0|)^{\rho}  \right])  |\bar{\theta}^{\lambda}_n|^{2p-2}  \\
&\quad+\frac{ 2pd\lambda(t-n)(2^{2r} K_G\E\left[ (1+|X_0|)^{\rho}  \right] +1)|\bar{\theta}^{\lambda}_n|^{2r+2p-1}}
{1+\sqrt{\lambda}|\bar{\theta}^{\lambda}_n|^{2r}}\\
&\quad + \sum_{k = 2}^{2p}\binom{2p}{k}\lambda^k(t-n)^k|\bar{\theta}^{\lambda}_n|^{2p-k}2^{2k-2}d^{k/2}\lambda^{-k/2}\E[(1+|X_0|)^{\rho k}](2+K_F^k)\\
&\quad + \sum_{k = 2}^{2p}\binom{2p}{k}\lambda^k(t-n)^k|\bar{\theta}^{\lambda}_n|^{2p}2^{2k-2}\lambda^{-k/2}\E[(1+|X_0|)^{\rho k}]K_F^k,
\end{split}
\end{align}
where we use the independence of $\bar{\theta}^{\lambda}_n$ and $X_{n+1}$. For any $\theta \in \R^d$ 
and for $\nu = -2, -1, 2r-2$, one observes that $|\theta|^{2p+\nu} \leq 1+|\theta|^{2p+2r-1}$, then, by using the same arguments as in \eqref{eq:theta2r1ineq}, one obtains
\begin{equation}\label{eq:theta2p2rminus1ineq}
|\theta|^{2p-1} \leq 1+\frac{ 2|\theta|^{2p+2r-1}  }{ 1+\sqrt{\lambda}|\theta|^{2r} }, \quad |\theta|^{2p-2} \leq 2+\frac{ 2|\theta|^{2p+2r-1}  }{ 1+\sqrt{\lambda}|\theta|^{2r} }.
\end{equation}
By applying \eqref{eq:theta2p2rminus1ineq} to \eqref{eq:2pthmmt5}, and by using $|\theta|^{2p-k} \leq 1+|\theta|^{2p-1}$, for any $\theta \in \R^d$, 
$2 \leq k \leq 2p$, we have
\begin{align}\label{eq:2pthmmt6}
\begin{split}
\E\left[\left.|\Delta_{n,t}^{\lambda}|^{2p}\right|\bar{\theta}^{\lambda}_n \right]
& \leq |\bar{\theta}^{\lambda}_n|^{2p}
-  \frac{ 2p \lambda(t-n) a_F|\bar{\theta}^{\lambda}_n|^{2r+2p} }{ 1+\sqrt{\lambda}|\bar{\theta}^{\lambda}_n|^{2r}}    \\
&  +  pd \lambda(t-n)(2+4b_F+2^{2r+2}  K_G\E\left[(1+|X_0|)^{\rho}  \right])   \\
&+\frac{p d\lambda(t-n)(2^{2r+3} K_G\E\left[ (1+|X_0|)^{\rho}  \right]+4b_F  +6)|\bar{\theta}^{\lambda}_n|^{2r+2p-1}}
{1+\sqrt{\lambda}|\bar{\theta}^{\lambda}_n|^{2r}}\\
& +\binom{2p}{p}(2p-1) 2^{4p-2}\lambda (t-n)d^{p} (2+K_F)^{2p} \E[(1+|X_0|)^{2p\rho}](1+|\bar{\theta}^{\lambda}_n|^{2p-1})\\
& +\sum_{k = 2}^{2p}\binom{2p}{k}\lambda^{k/2} (t-n)2^{2k-2} K_F^k \E[(1+|X_0|)^{\rho k}]|\bar{\theta}^{\lambda}_n|^{2p}.
\end{split}
\end{align}
By using the first inequality in \eqref{eq:theta2p2rminus1ineq}, and by using $
|\theta|^{2p} =  \frac{|\theta|^{2p} + \sqrt{\lambda}|\theta|^{2p+2r}  }{ 1+\sqrt{\lambda}|\theta|^{2r} } \leq 1+\frac{  |\theta|^{2p+2r-1}  }{ 1+\sqrt{\lambda}|\theta|^{2r} }+ \frac{ \sqrt{\lambda}|\theta|^{2p+2r}  }{ 1+\sqrt{\lambda}|\theta|^{2r} }$, for any $\theta \in \R^d$, 
\eqref{eq:2pthmmt6} can be upper bounded as follows:
\begin{align}\label{eq:2pthmmt7}
\begin{split}
\E\left[\left.|\Delta_{n,t}^{\lambda}|^{2p}\right|\bar{\theta}^{\lambda}_n \right]
& \leq|\bar{\theta}^{\lambda}_n|^{2p}  -\lambda(t-n)T_3(\bar{\theta}^{\lambda}_n)|\bar{\theta}^{\lambda}_n|^{2p} - \lambda (t-n) T_4(\bar{\theta}^{\lambda}_n)\\
& \quad + \lambda(t-n)pd(2+4b_F+2^{2r+2}  K_G\E\left[(1+|X_0|)^{\rho}  \right])  \\
&\quad  +\lambda (t-n)d^{p}\binom{2p}{p}(2p-1) 2^{4p} (2+K_F)^{2p} \E[(1+|X_0|)^{2p\rho}],
\end{split}
\end{align}
where for any $\theta \in \R^d$, $T_3(\theta):= (a_F|\theta|^{2r}-p d(2^{2r+3} K_G\E\left[ (1+|X_0|)^{\rho}  \right]+4b_F  +6)|\theta|^{2r -1} -d^p\binom{2p}{p}(2p-1) 2^{4p} (2+K_F)^{2p} \E[(1+|X_0|)^{2p\rho}]|\theta|^{2r -1})/(1+\sqrt{\lambda}|\theta|^{2r})
$, and
\begin{equation}\label{eq:2pthmmtT4exp}
T_4(\theta) :=\sum_{k = 2}^{2p}\left(\frac{ a_F|\theta|^{2r+2p} }{ 1+\sqrt{\lambda}|\theta|^{2r}}
- \binom{2p}{k}\lambda^{(k-1)/2}2^{2k-2} K_F^k\E[(1+|X_0|)^{\rho k}]\frac{  |\theta|^{2p+2r}  }{ 1+\sqrt{\lambda}|\theta|^{2r} }\right).
\end{equation}
One obtains that $T_3(\theta)>\frac{  a_F|\theta|^{2r} }{ 2(1+\sqrt{\lambda}|\theta|^{2r})}$ implies $|\theta| >(2p d(2^{2r+3} K_G\E\left[ (1+|X_0|)^{\rho}  \right]+4b_F  +6)+d^p\binom{2p}{p}(2p-1) 2^{4p+1} (2+K_F)^{2p} \E[(1+|X_0|)^{2p\rho}])/a_F$.
Denote by $M_1(p) : =  (2p d(2^{2r+3} K_G\E\left[ (1+|X_0|)^{2\rho}  \right]+4b_F  +6)+ d^p\binom{2p}{p}(2p-1) 2^{4p+1} (2+K_F)^{2p} \E[(1+|X_0|)^{2p\rho}] )/\min{\{1, a_F \}}$. The above inequality and the fact that $f(s) := s/(1+\sqrt{\lambda}s)$ is non-decreasing for any $s \geq 0$ imply that, for any $|\theta |>M_1(p)$,
\begin{equation}\label{eq:2pthmmtleT3}
T_3(\theta)
> \frac{  a_F|\theta|^{2r} }{ 2(1+\sqrt{\lambda}|\theta|^{2r})}
\geq \frac{  a_F(M_1(p))^{2r} }{ 2(1+\sqrt{\lambda}(M_1(p))^{2r})} \geq 2a_F\bar{\kappa}(p),
\end{equation}
where $\bar{\kappa}(p): = (M_1(p))^{2r}/(4(1+(M_1(p))^{2r}))$. 
Furthermore, one notes that, for any $\theta \in \R^d$, $k = 2, \dots, 2p$, 
\begin{align}\label{eq:2pthmmtleT4i}
\begin{split}
\frac{ a_F|\theta|^{2r+2p} }{ 1+\sqrt{\lambda}|\theta|^{2r}}
- \binom{2p}{k}\lambda^{(k-1)/2}2^{2k-2} K_F^k\E[(1+|X_0|)^{\rho k}]\frac{  |\theta|^{2p+2r}  }{ 1+\sqrt{\lambda}|\theta|^{2r} }\geq 0 &\\
 \Leftrightarrow \quad \lambda \leq  \lambda_{p,k}: = \frac{ (a_F/K_F)^{2/(k-1)}}{16K_F^2(\binom{2p}{k}\E[(1+|X_0|)^{\rho k}])^{2/(k-1)} }.&
\end{split}
\end{align}
To see that $\lambda \leq \lambda_{p, \max} \leq \lambda_{p,k}$ is indeed satisfied, observes that, for any $2\leq k \leq 2p$,
$
(\binom{2p}{k}\E[(1+|X_0|)^{\rho k}])^{\frac{2}{k-1}}
\leq p^2(2p-1)^2(\E[(1+|X_0|)^{2p\rho }])^2$. 
By using this inequality and the definition of $\lambda_{p, \max}$ in \eqref{eq:stepsizemax}, it indeed holds that, 
\begin{align}\label{eq:2pthmmtleT4ii}
\lambda \leq \lambda_{p, \max} \leq \frac{\min\{(a_F/K_F)^2, (a_F/K_F)^{2/(2p-1)}\}}{16K_F^2 p^2(2p-1)^2(\E[(1+|X_0|)^{2p\rho }])^2} \leq \lambda_{p,k}, \quad k = 2, \dots, 2p.
\end{align}
Thus, by using \eqref{eq:2pthmmtleT4i}, \eqref{eq:2pthmmtleT4ii}, and the expression of $T_4$ in \eqref{eq:2pthmmtT4exp},  
one obtains, for any $\theta \in \R^d$, that
\begin{equation}\label{eq:2pthmmtleT4}
T_4(\theta)\geq 0.
\end{equation}
Denote by $\mathsf{S}_{n,M_1(p)} := \{\omega \in \Omega: |\bar{\theta}^{\lambda}_n(\omega)| >M_1(p)\}$. Substituting \eqref{eq:2pthmmtleT3}, \eqref{eq:2pthmmtleT4} into \eqref{eq:2pthmmt7} yields 
\begin{align}\label{eq:2pthmmt8}
\E\left[\left.|\Delta_{n,t}^{\lambda}|^{2p}\1_{\mathsf{S}_{n,M_1(p)} }\right|\bar{\theta}^{\lambda}_n \right]
& \leq  (1  -2\lambda(t-n)a_F\bar{\kappa}(p)) |\bar{\theta}^{\lambda}_n|^{2p} \1_{\mathsf{S}_{n,M_1(p)} } +\lambda(t-n)c_3(p)\1_{\mathsf{S}_{n,M_1(p)} },
\end{align}
where $c_3(p)= pd\E\left[(1+|X_0|)^{2\rho}  \right](2+4b_F+2^{2r+2}  K_G) + d^{p}\binom{2p}{p}(2p-1) 2^{4p} (2+K_F)^{2p} \E[(1+|X_0|)^{2p\rho}]$. Similarly, by using \eqref{eq:2pthmmt7}, \eqref{eq:2pthmmtleT4}, one obtains 
\begin{align}\label{eq:2pthmmt9}
\begin{split}
&\E\left[\left.|\Delta_{n,t}^{\lambda}|^{2p}\1_{\mathsf{S}_{n,M_1(p)}^{\mathsf{c}} }\right|\bar{\theta}^{\lambda}_n \right]  \\
& \leq  (1  -2\lambda(t-n)a_F\bar{\kappa}(p)) |\bar{\theta}^{\lambda}_n|^{2p}\1_{\mathsf{S}_{n,M_1(p)}^{\mathsf{c}} } +\lambda(t-n)(c_3(p)+2a_F\bar{\kappa}(p) (M_1(p))^{2p})\1_{\mathsf{S}_{n,M_1(p)}^{\mathsf{c}} }   \\
&\quad +\lambda(t-n)p d(2^{2r+3} K_G\E\left[ (1+|X_0|)^{\rho}  \right]+4b_F  +6)(M_1(p))^{2r+2p-1}\1_{\mathsf{S}_{n,M_1(p)}^{\mathsf{c}} }\\
&\quad +\lambda(t-n)d^p\binom{2p}{p}(2p-1) 2^{4p} (2+K_F)^{2p} \E[(1+|X_0|)^{2p\rho}](M_1(p))^{2r+2p-1}\1_{\mathsf{S}_{n,M_1(p)}^{\mathsf{c}} }.
\end{split}
\end{align}
Combining the results in \eqref{eq:2pthmmt8} and \eqref{eq:2pthmmt9} yields
\begin{align}\label{eq:2pthmmtdelta2p}
\E\left[\left.|\Delta_{n,t}^{\lambda}|^{2p} \right|\bar{\theta}^{\lambda}_n \right]
&\leq (1  -2\lambda(t-n)a_F\bar{\kappa}(p)) |\bar{\theta}^{\lambda}_n|^{2p}+\lambda(t-n)c_4(p),
\end{align}
where
$
c_4(p)
:= c_3(p)+2a_F\bar{\kappa}(p) (M_1(p))^{2p}
+p d\E\left[ (1+|X_0|)^{2\rho}  \right](2^{2r+3} K_G+4b_F  +6)(M_1(p))^{2r+2p-1}
+ d^p\binom{2p}{p}(2p-1) 2^{4p} (2+K_F)^{2p} \E[(1+|X_0|)^{2p\rho}](M_1(p))^{2r+2p-1}.
$
Define
\begin{align*}
M_1(1) &: =  2 d(2^{2r+3} K_G\E\left[ (1+|X_0|)^{2\rho}  \right]+4b_F  +6) /\min{\{1, a_F \}}\\
&\quad+ 64d    (2+K_F)^{2} \E[(1+|X_0|)^{2 \rho}]  /\min{\{1, a_F \}},\\
c_4(1) &:= d\E\left[(1+|X_0|)^{2\rho}  \right](2+4b_F+2^{2r+2}  K_G) + 32d (2+K_F)^{2} \E[(1+|X_0|)^{2 \rho}]\\
&\quad +2a_F(M_1(1))^{2r+2}/(4(1+(M_1(1))^{2r})) \\
&\quad +  d\E\left[ (1+|X_0|)^{2\rho}  \right](2^{2r+3} K_G+4b_F  +6)(M_1(1))^{2r+1}\\
&\quad +32 d (2+K_F)^{2 } \E[(1+|X_0|)^{2 \rho}](M_1(1))^{2r+1}.
\end{align*}
By using \eqref{eq:2ndmmtue4} and the fact that $0 <\lambda \leq \lambda_{1, \max}\leq a_F^2 /(9 (\E[(1+|X_0|)^{2\rho}])^2K_F^4 )$, one obtains
$
\E\left[\left.|\Delta_{n,t}^{\lambda}|^2 \right|\bar{\theta}^{\lambda}_n \right] \leq |\bar{\theta}^{\lambda}_n|^2+\lambda(t-n)c_2 \leq |\bar{\theta}^{\lambda}_n|^2+\lambda(t-n)c_4(1),
$ 
which, together with \eqref{eq:2pthmmtdelta2p} implies 
\begin{align}\label{eq:2pthmmtdelta2p2}
\E\left[\left.|\Delta_{n,t}^{\lambda}|^{2(p-1)} \right|\bar{\theta}^{\lambda}_n \right]
&\leq   |\bar{\theta}^{\lambda}_n|^{2(p-1)}+\lambda(t-n)c_4(p-1).
\end{align}
By substituting \eqref{eq:2pthmmtdelta2p}, \eqref{eq:2pthmmtdelta2p2} into \eqref{eq:2pthmmt3}, one obtains
\begin{align*}
\E\left[\left.|\bar{\theta}^{\lambda}_t|^{2p}\right|\bar{\theta}^{\lambda}_n \right]
& \leq  (1  -2\lambda(t-n)a_F\bar{\kappa}(p)) |\bar{\theta}^{\lambda}_n|^{2p}+\lambda(t-n)c_4(p)\\
&\quad +2^{2p-2}p(2p-1)\lambda (t-n)d \beta^{-1}\left(|\bar{\theta}^{\lambda}_n|^{2(p-1)}+\lambda(t-n)c_4(p-1) \right)\\
&\quad +2^{2p-4}(2p(2p-1))^{p+1}(d\beta^{-1}\lambda (t-n) )^p.
\end{align*}
One observes that for any $\theta \in \R^d$, $
-\lambda(t-n)a_F\bar{\kappa}(p)|\theta|^{2p}+2^{2p-2}p(2p-1)\lambda (t-n)d \beta^{-1}|\theta|^{2(p-1)}<0$
implies $  |\theta|>\left(2^{2p-2}p(2p-1)d \beta^{-1}/(a_F\bar{\kappa}(p))\right)^{1/2} =:M_2(p)$. 
Let $\mathsf{S}_{n,M_2(p)} := \{\omega \in \Omega: |\bar{\theta}^{\lambda}_n(\omega)| >M_2(p)\}$. By using the above inequality, and by using $0< \lambda \leq \lambda_{p, \max} \leq 1$, $0<t-n\leq 1$, one obtains 
\begin{align}\label{eq:2pthmmt10}
\begin{split}
\E\left[\left.|\bar{\theta}^{\lambda}_t|^{2p}\1_{\mathsf{S}_{n,M_2(p)} } \right|\bar{\theta}^{\lambda}_n \right]
& \leq  (1  -\lambda(t-n)a_F\bar{\kappa}(p)) |\bar{\theta}^{\lambda}_n|^{2p}\1_{\mathsf{S}_{n,M_2(p)} }+\lambda(t-n)c_4(p)\1_{\mathsf{S}_{n,M_2(p)} }\\
&\quad +\lambda (t-n) 2^{2p-2}p(2p-1) d \beta^{-1} c_4(p-1) \1_{\mathsf{S}_{n,M_2(p)} } \\
&\quad +\lambda (t-n) 2^{2p-4}(2p(2p-1))^{p+1}(d\beta^{-1})^p\1_{\mathsf{S}_{n,M_2(p)} }.
\end{split}
\end{align}
In addition, straightforward calculations yield
\begin{align}\label{eq:2pthmmt11}
\begin{split}
\E\left[\left.|\bar{\theta}^{\lambda}_t|^{2p}\1_{\mathsf{S}_{n,M_2(p)}^{\mathsf{c}} }\right|\bar{\theta}^{\lambda}_n \right]
& \leq  (1  -\lambda(t-n)a_F\bar{\kappa}(p)) |\bar{\theta}^{\lambda}_n|^{2p}\1_{\mathsf{S}_{n,M_2(p)}^{\mathsf{c}} }+\lambda(t-n)c_4(p)\1_{\mathsf{S}_{n,M_2(p)}^{\mathsf{c}} }\\
& +\lambda (t-n) 2^{2p-2}p(2p-1) d \beta^{-1} ((M_2(p))^{2(p-1)}+c_4(p-1) )\1_{\mathsf{S}_{n,M_2(p)}^{\mathsf{c}} }\\
& +\lambda (t-n) 2^{2p-4}(2p(2p-1))^{p+1}(d\beta^{-1})^p\1_{\mathsf{S}_{n,M_2(p)}^{\mathsf{c}} }.
\end{split}
\end{align}
Combining \eqref{eq:2pthmmt10} and \eqref{eq:2pthmmt11} yields
\begin{equation}
\E\left[\left.|\bar{\theta}^{\lambda}_t|^{2p}\right|\bar{\theta}^{\lambda}_n \right] \leq  (1  -\lambda(t-n)a_F\bar{\kappa}(p)) |\bar{\theta}^{\lambda}_n|^{2p}+\lambda(t-n)\bar{c}_0(p),
\end{equation}
where
\begin{align}\label{eq:2pthmmtconstbarc0}
\begin{split}
\bar{\kappa}(p)&: = (M_1(p))^{2r}/(4(1+(M_1(p))^{2r})),\\
M_1(p) &: = \Big(2p d(2^{2r+3} K_G\E\left[ (1+|X_0|)^{2\rho}  \right]+4b_F  +6)\Big. \\
&\quad \Big. + d^p\binom{2p}{p}(2p-1) 2^{4p+1} (2+K_F)^{2p} \E[(1+|X_0|)^{2p\rho}] \Big)/\min{\{1, a_F \}},\\
\bar{c}_0(p)&:=c_4(p)+2^{2p-2}p(2p-1) d \beta^{-1} ((M_2(p))^{2(p-1)}+c_4(p-1) )  +2^{2p-4}(2p(2p-1))^{p+1}(d\beta^{-1})^p,\\
c_4(p) &:= pd\E\left[(1+|X_0|)^{2\rho}  \right](2+4b_F+2^{2r+2}  K_G) +2a_F\bar{\kappa}(p) (M_1(p))^{2p}\\
&\quad +p d\E\left[ (1+|X_0|)^{2\rho}  \right](2^{2r+3} K_G+4b_F  +6)(M_1(p))^{2r+2p-1}\\
&\quad + d^p\binom{2p}{p}(2p-1) 2^{4p} (2+K_F)^{2p} \E[(1+|X_0|)^{2p\rho}](1+(M_1(p))^{2r+2p-1}),\\
M_2(p)& : = \left(2^{2p-2}p(2p-1)d \beta^{-1}/(a_F\bar{\kappa}(p))\right)^{1/2}.
\end{split}
\end{align}
Finally, by using the same arguments as in \eqref{eq:2ndmmtinduction}, and by using $\bar{\kappa}(p) \geq \bar{\kappa}(2)$, 
one obtains that
\begin{align*}
\E\left[ |\bar{\theta}^{\lambda}_t|^{2p}  \right]
&\leq  (1-\lambda(t-n)  a_F \kappa^{\sharp}_2 )(1- \lambda  a_F \kappa^{\sharp}_2)^n \E\left[|\theta_0|^{2p}\right]  +c_0^{\sharp}(p)(1+1/(  a_F \kappa^{\sharp}_2)),
\end{align*}
where $\kappa^{\sharp}_2 := \min\{\bar{\kappa}(2), \tilde{\kappa}(2)\}$, $c_0^{\sharp}(p):= \max\{\bar{c}_0(p), \tilde{c}_0(p)\}$ with $\bar{\kappa}(2)$, $\bar{c}_0(p)$ and $\tilde{\kappa}(2)$, $\tilde{c}_0(p)$ given in \eqref{eq:2pthmmtconstbarc0} and \eqref{eq:2pthmmtconsttildec0}, respectively.
\end{proof}

\begin{proof}[\textbf{Proof of Lemma \ref{lem:2ndpthmmt}-\ref{lem:2ndpthmmtiii}}] Let $p \in [2, \infty) \cap \N$, $0 <\lambda \leq \hat{\lambda}_{\max}$ with $\hat{\lambda}_{\max}$ given in \eqref{eq:stepsizemaxrelaxed}, $n \in \N_0$, and $t\in (n, n+1]$. Since $F(\theta, x) = \hat{F}(\theta)$, for any $\theta \in \R^d$, $x \in \R^m$, by Remark \ref{rem:Fhdissiposl}, one obtains,
\begin{equation}\label{eq:Fdissipsp}
\langle \theta,  \hat{F}(\theta)\rangle  = \langle \theta, \E[F(\theta, X_0)]\rangle \geq a_F|\theta|^{2r+2} - b_F,
\end{equation}
for any $\theta \in \R^d$, where $a_F := a/2$ and $b_F := (a/2+b)R_F^{\bar{r}+2}+dK_F^2\E[(1+|X_0|)^{2\rho}]/{2a}$ with $
R_F := \max\{(4b/a)^{1/(2r-\bar{r})}, 2^{1/(2r)}\}$. 
Moreover, by using the same arguments as in the proof of Lemma \ref{lem:GlaFlabd}, one obtains, for any $\theta \in \R^d$,
\begin{equation}\label{eq:Flabdsp}
|F_\lambda^{(i)}(\theta,x)|= \left|\frac{\hat{F}^{(i)}(\theta) }{1+\sqrt{\lambda}|\theta|^{2r}}\right| = \left|\frac{\E[F^{(i)}(\theta,X_0)]}{1+\sqrt{\lambda}|\theta|^{2r}}\right| \leq \lambda^{-1/2}\E[(1+|X_0|)^{\rho}]\left(K_F+K_F|\theta^{(i)}|\right).
\end{equation}
In order to obtain the 2p-th moment estimate of e-TH$\varepsilon$O POULA \eqref{eq:theopoulaproc} under a relaxed stepsize restriction, we follow the proof of Lemma \ref{lem:2ndpthmmt}-\ref{lem:2ndpthmmtii} up to \eqref{eq:2pthmmt3}, and establish upper bounds for $\E\left[\left. |\Delta_{n,t}^{\lambda}|^{2p}   \right|\bar{\theta}^{\lambda}_n \right]$ and $\E\left[\left. |\Delta_{n,t}^{\lambda}|^{2p-2}   \right|\bar{\theta}^{\lambda}_n \right]$ 
using the following method. By using the arguments in the proof of Lemma \ref{lem:2ndpthmmt}-\ref{lem:2ndpthmmti} up to \eqref{eq:deltasquaredub2} but with Remark \ref{rem:Fhdissiposl} replaced by \eqref{eq:Fdissipsp} and \eqref{eq:Flabd} in Lemma \ref{lem:GlaFlabd} replaced by \eqref{eq:Flabdsp}, one obtains
\begin{align}
\begin{split}\label{eq:2pthmmtFthetaub1}
|\Delta_{n,t}^{\lambda}|^2
&\leq  \left(1-  \frac{  \lambda(t-n)  a_F|\bar{\theta}^{\lambda}_n|^{2r} }{1+\sqrt{\lambda}|\bar{\theta}^{\lambda}_n|^{2r}} \right)|\bar{\theta}^{\lambda}_n|^2 - \lambda (t-n)(T_5(\bar{\theta}^{\lambda}_n) + d(1+|X_{n+1}|)^{2\rho}2^{2r+1} K_G) \\
&\quad+\frac{\lambda(t-n)  d(1+|X_{n+1}|)^{2\rho}(2^{2r+1} K_G +2)|\bar{\theta}^{\lambda}_n|^{2r+1}}{1+\sqrt{\lambda}|\bar{\theta}^{\lambda}_n|^{2r}}  \\
&\quad +  \lambda(t-n) d\E[(1+|X_0|)^{2\rho}] ( 2b_F+8+8K_F+6K_F^2) \\
&\quad  +\frac{\lambda(t-n)  d\E[(1+|X_0|)^{2\rho}](8+8K_F+3K_F^2)|\bar{\theta}^{\lambda}_n|^{2r+1}}{1+\sqrt{\lambda}|\bar{\theta}^{\lambda}_n|^{2r}},
\end{split}
\end{align}
where, for any $\theta \in \R^d$, $T_5(\theta) := \frac{  a_F|\theta|^{2r+2} }{1+\sqrt{\lambda}|\theta|^{2r}} - \frac{3\lambda  K_F^2\E[(1+|X_0|)^{2\rho}]|\theta|^{4r+2}}{(1+\sqrt{\lambda}|\theta|^{2r})^2}$. One observes that, for any $\theta \in \R^d$,
\begin{align}\label{eq:2pthmmtleT5}
\begin{split}
T_5(\theta) 
& \geq   \frac{ \sqrt{\lambda}a_F|\theta|^{4r+2} -3\lambda  K_F^2\E[(1+|X_0|)^{2\rho}]|\theta|^{4r+2}}{(1+\sqrt{\lambda}|\theta|^{2r})^2}  \geq 0,
\end{split}
\end{align}
since $0<\lambda\leq \hat{\lambda}_{\max}\leq a_F^2/(9K_F^4(\E[(1+|X_0|)^{2\rho}])^2)$. By using \eqref{eq:2pthmmtleT5}, 
\eqref{eq:2pthmmtFthetaub1} becomes
\begin{equation}\label{eqn:2pthmmtdeltasqT6T7}
|\Delta_{n,t}^{\lambda}|^2 \leq T_6(\bar{\theta}^{\lambda}_n) +T_7(\bar{\theta}^{\lambda}_n, X_{n+1}),
\end{equation}
where for any $\theta \in \R^d$,
$
T_6(\theta):=\left(1-  \frac{ \lambda(t-n)  a_F|\theta|^{2r} }{1+\sqrt{\lambda}|\theta|^{2r}} \right)|\theta|^2,
$ 
and for any $\theta \in \R^d$, $x \in \R^m$,
\begin{align*}
T_7(\theta, x)&:= \lambda(t-n) d(1+|x|)^{2\rho}2^{2r+1} K_G\\
&\quad+\frac{\lambda(t-n)  d(1+|x|)^{2\rho}(2^{2r+1} K_G +2)|\theta|^{2r+1}}{1+\sqrt{\lambda}|\theta|^{2r}}  \\
&\quad +  \lambda(t-n) d\E[(1+|X_0|)^{2\rho}] ( 2b_F+8+8K_F+6K_F^2) \\
&\quad  +\frac{\lambda(t-n)  d\E[(1+|X_0|)^{2\rho}](8+8K_F+3K_F^2)|\theta|^{2r+1}}{1+\sqrt{\lambda}|\theta|^{2r}}.
\end{align*}
One notes that, since $0<\lambda\leq \hat{\lambda}_{\max}\leq 1/(a_F^2)$, one has for any $\theta \in \R^d\setminus \{(0,\dots, 0)_d\}$ that $
(1-   \lambda(t-n)  a_F|\theta|^{2r} /(1+\sqrt{\lambda}|\theta|^{2r}) )\in(0,1).
$ 
Therefore, by using \eqref{eqn:2pthmmtdeltasqT6T7}, further calculations yield
\begin{align*}
&\E\left[\left. |\Delta_{n,t}^{\lambda}|^{2p}   \right|\bar{\theta}^{\lambda}_n \right]
 = \sum_{k = 0}^p \binom{p}{k} \E\left[\left. (T_6(\bar{\theta}^{\lambda}_n) )^{p-k} (T_7(\bar{\theta}^{\lambda}_n, X_{n+1}))^k \right|\bar{\theta}^{\lambda}_n \right]\\
&\leq \left(1-  \frac{ \lambda(t-n)   a_F|\bar{\theta}^{\lambda}_n|^{2r} }{1+\sqrt{\lambda}|\bar{\theta}^{\lambda}_n|^{2r}} \right)|\bar{\theta}^{\lambda}_n|^{2p}\\
&\quad +\sum_{k = 1}^p \binom{p}{k} |\bar{\theta}^{\lambda}_n|^{2(p-k)} 4^k\lambda^k (t-n)^k d^k\E[(1+|X_0|)^{2k\rho}]( 2^{2r+1} K_G+2b_F+8+8K_F+6K_F^2)^k \\
&\quad  +\sum_{k = 1}^p \binom{p}{k}  4^k\frac{\lambda^k(t-n)^k  d^k\E[(1+|X_0|)^{2k\rho}](2^{2r+1} K_G +10+8K_F+3K_F^2)^k|\bar{\theta}^{\lambda}_n|^{2p+2rk-k}}{(1+\sqrt{\lambda}|\bar{\theta}^{\lambda}_n|^{2r})^k}.
\end{align*}
where the inequality is obtained by using Jensen's inequality and \eqref{eq:fundamentalineq}. 
For any $\theta \in \R^d$, 
$1 \leq k \leq p$, it holds that $|\theta|^{2p-2k} \leq 1+|\theta|^{2p-1}$ and $|\theta|^{2p+2rk-k} \leq 1+|\theta|^{2p+2rk-1}$. By using the aforementioned inequalities together with \eqref{eq:theta2p2rminus1ineq}, one obtains 
\begin{align}\label{eq:2pthmmtFthetaub2}
\E\left[\left. |\Delta_{n,t}^{\lambda}|^{2p}   \right|\bar{\theta}^{\lambda}_n \right] 
&\leq\left(1-  \frac{ \lambda(t-n)   a_F|\bar{\theta}^{\lambda}_n|^{2r} }{2(1+\sqrt{\lambda}|\bar{\theta}^{\lambda}_n|^{2r})} \right)|\bar{\theta}^{\lambda}_n|^{2p}- \lambda(t-n)T_8(\bar{\theta}^{\lambda}_n)+\lambda(t-n)c_5(p),
\end{align}
where
$
c_5(p)
:=d^p \binom{p}{\lcrc{p/2}} p  4^{p+1}\E[(1+|X_0|)^{2p\rho}]( 2^{2r+1} K_G+2b_F+8+8K_F+6K_F^2)^p
+d^p\binom{p}{\lcrc{p/2}}p  4^p\E[(1+|X_0|)^{2p\rho}](2^{2r+1} K_G +10+8K_F+3K_F^2)^p,
$ 
and where for any $\theta \in \R^d$,
$
T_8(\theta):=\frac{    a_F|\theta|^{2r+2p} }{2(1+\sqrt{\lambda}|\theta|^{2r})}
 - \frac{d^p \binom{p}{\lcrc{p/2}} p  4^{p+1}\E[(1+|X_0|)^{2p\rho}]( 2^{2r+1} K_G+2b_F+8+8K_F+6K_F^2)^p|\theta|^{2p+2r-1}}{1+\sqrt{\lambda}|\theta|^{2r}}
 - d^p\binom{p}{\lcrc{p/2}}  4^p\E[(1+|X_0|)^{2p\rho}](2^{2r+1} K_G +10+8K_F+3K_F^2)^p\sum_{k = 1}^p \frac{\lambda^{k-1}  |\theta|^{2p+2rk-1}}{(1+\sqrt{\lambda}|\theta|^{2r})^k} .
$ 
One notes that,
\begin{align}
\begin{split}\label{eqn:2pthmmtT8lb1}
&\frac{     a_F|\theta|^{2r+2p} }{4(1+\sqrt{\lambda}|\theta|^{2r})}\\
& - \frac{d^p \binom{p}{\lcrc{p/2}} p  4^{p+1}\E[(1+|X_0|)^{2p\rho}]( 2^{2r+1} K_G+2b_F+8+8K_F+6K_F^2)^p|\theta|^{2p+2r-1}}{1+\sqrt{\lambda}|\theta|^{2r}}>0
\end{split}\\
&\Leftrightarrow |\theta|>d^p \binom{p}{\lcrc{p/2}} p  4^{p+2}\E[(1+|X_0|)^{2p\rho}]( 2^{2r+1} K_G+2b_F+8+8K_F+6K_F^2)^p/  a_F,  \nonumber
\end{align}
and moreover, for any $1 \leq k \leq p$,
\begin{align}
\begin{split}\label{eqn:2pthmmtT8lb2}
&\frac{    a_F\lambda^{(k-1)/2}|\theta|^{2rk+2p} }{4p(1+\sqrt{\lambda}|\theta|^{2r})^k}\\
& -   \frac{\lambda^{(k-1)/2}d^p\binom{p}{\lcrc{p/2}}  4^p\E[(1+|X_0|)^{2p\rho}](2^{2r+1} K_G +10+8K_F+3K_F^2)^p  |\theta|^{2p+2rk-1}}{(1+\sqrt{\lambda}|\theta|^{2r})^k}>0
\end{split}\\
&\Leftrightarrow |\theta|>  d^p\binom{p}{\lcrc{p/2}} p 4^{p+1}\E[(1+|X_0|)^{2p\rho}](2^{2r+1} K_G +10+8K_F+3K_F^2)^p/  a_F. \nonumber
\end{align}
Denote by $M_3(p):=d^p \binom{p}{\lcrc{p/2}} p  4^{p+2}\E[(1+|X_0|)^{2p\rho}]( 2^{2r+1} K_G+2b_F+10+8K_F+6K_F^2)^p/ \min\{1, a_F\}$. Then, by using \eqref{eqn:2pthmmtT8lb1} and \eqref{eqn:2pthmmtT8lb2}, one obtains, for any $|\theta|>M_3(p)$, that
\begin{align}\label{eq:2pthmmtleT8}
T_8(\theta)
& >0,
\end{align}
In addition, by using the fact that $f(s) := s/(1+\sqrt{\lambda}s)$ is non-decreasing for any $s \geq 0$, we have, for any $|\theta|>M_3(p)$, that 
\begin{equation}\label{eq:2pthmmtlecontraT}
\frac{    a_F|\theta|^{2r} }{2(1+\sqrt{\lambda}|\theta|^{2r})} \geq \frac{    a_F(M_3(p))^{2r} }{2(1+\sqrt{\lambda}(M_3(p))^{2r})} \geq 2 a_F \tilde{\kappa}(p),
\end{equation}
where $\tilde{\kappa}(p):= (M_3(p))^{2r} /(4(1+ (M_3(p))^{2r}))$. Denote by $\mathsf{S}_{n,M_3(p)} := \{\omega \in \Omega: |\bar{\theta}^{\lambda}_n(\omega)| >M_3(p)\}$. By using \eqref{eq:2pthmmtleT8} and \eqref{eq:2pthmmtlecontraT}, the RHS of \eqref{eq:2pthmmtFthetaub2} can be upper bounded by
\begin{align}\label{eq:2pthmmtFthetaub3}
\E\left[\left. |\Delta_{n,t}^{\lambda}|^{2p}\1_{\mathsf{S}_{n,M_3(p)} }   \right|\bar{\theta}^{\lambda}_n \right]
&\leq \left(1-   2\lambda(t-n)   a_F \tilde{\kappa}(p)   \right)|\bar{\theta}^{\lambda}_n|^{2p} \1_{\mathsf{S}_{n,M_3(p)} }+\lambda(t-n)c_5(p)\1_{\mathsf{S}_{n,M_3(p)} }.
\end{align}
Similarly, one obtains that
\begin{align}
\begin{split}\label{eq:2pthmmtFthetaub4}
\hspace{-0.5em}\E\left[\left. |\Delta_{n,t}^{\lambda}|^{2p}\1_{\mathsf{S}_{n,M_3(p)}^{\mathsf{c}} }   \right|\bar{\theta}^{\lambda}_n \right]
&\leq \left(1-   2\lambda(t-n)   a_F \tilde{\kappa}(p)   \right)|\bar{\theta}^{\lambda}_n|^{2p}  \1_{\mathsf{S}_{n,M_3(p)}^{\mathsf{c}} } +\lambda(t-n)\1_{\mathsf{S}_{n,M_3(p)}^{\mathsf{c}} }\Big(c_5(p)\Big. \\
& \quad+2 a_F \tilde{\kappa}(p)(M_3(p))^{2p}  + d^p \binom{p}{\lcrc{p/2}} p  4^{p+2}\E[(1+|X_0|)^{2p\rho}]\\
&\qquad \times\Big.( 2^{2r+1} K_G+2b_F+10+8K_F+6K_F^2)^p(M_3(p))^{2p+2rp-1}\Big).
\end{split}
\end{align}
Combining \eqref{eq:2pthmmtFthetaub3} and \eqref{eq:2pthmmtFthetaub4} yields, for any $p \in [2, \infty) \cap \N$,
\begin{equation}\label{eq:2pthmmtexpdelta2pub}
\E\left[\left. |\Delta_{n,t}^{\lambda}|^{2p}  \right|\bar{\theta}^{\lambda}_n \right] \leq \left(1-   2\lambda(t-n)   a_F \tilde{\kappa}(p)   \right)|\bar{\theta}^{\lambda}_n|^{2p} +\lambda(t-n)c_6(p),
\end{equation}
where $c_6(p): = c_5(p)+2a_F  \tilde{\kappa}(p)(M_3(p))^{2p}+d^p \binom{p}{\lcrc{p/2}} p  4^{p+2}\E[(1+|X_0|)^{2p\rho}] ( 2^{2r+1} K_G+2b_F+10+8K_F+6K_F^2)^p (M_3(p))^{2p+2rp-1} $. Define
$M_3(1):=64d  \E[(1+|X_0|)^{2 \rho}]( 2^{2r+1} K_G+2b_F+10+8K_F+6K_F^2) / \min\{1, a_F\}$,
$\tilde{\kappa}(1):= (M_3(1))^{2r} /(4(1+ (M_3(1))^{2r}))$,
$c_5(1):=16 d   \E[(1+|X_0|)^{2 \rho}]( 2^{2r+1} K_G+2b_F+8+8K_F+6K_F^2)
+4d \E[(1+|X_0|)^{2 \rho}](2^{2r+1} K_G +10+8K_F+3K_F^2)$, and $c_6(1): = c_5(1)+2a_F  \tilde{\kappa}(1)(M_3(1))^2
+ 64d   \E[(1+|X_0|)^{2 \rho}] ( 2^{2r+1} K_G+2b_F+10+8K_F+6K_F^2)  (M_3(1))^{ 2r+1}
$. 
By using \eqref{eq:2ndmmtue4}, one observes that 
\begin{equation}\label{eq:2pthmmtexpdelta2ub}
\E\left[\left. |\Delta_{n,t}^{\lambda}|^2  \right|\bar{\theta}^{\lambda}_n \right]
\leq   |\bar{\theta}^{\lambda}_n|^2+\lambda(t-n)c_2 \leq |\bar{\theta}^{\lambda}_n|^2+\lambda(t-n)c_6(1).
\end{equation}
Then, by using \eqref{eq:2pthmmtexpdelta2pub}, \eqref{eq:2pthmmtexpdelta2ub}, 
one obtains the following result:
\begin{equation}\label{eq:2pthmmtexpdelta2p2ub}
\E\left[\left. |\Delta_{n,t}^{\lambda}|^{2p-2}  \right|\bar{\theta}^{\lambda}_n \right] \leq  |\bar{\theta}^{\lambda}_n|^{2p-2} +\lambda(t-n)c_6(p-1).
\end{equation}
Substituting \eqref{eq:2pthmmtexpdelta2pub}, \eqref{eq:2pthmmtexpdelta2p2ub} into \eqref{eq:2pthmmt3} yields
\begin{align}\label{eq:2pthmmtFthetaub5}
\begin{split}
\E\left[\left.|\bar{\theta}^{\lambda}_t|^{2p}\right|\bar{\theta}^{\lambda}_n \right]
& \leq  \left(1-   2\lambda(t-n)   a_F \tilde{\kappa}(p)   \right)|\bar{\theta}^{\lambda}_n|^{2p} +\lambda(t-n)c_6(p)\\
&\quad +2^{2p-2}p(2p-1)\lambda (t-n)d \beta^{-1}(|\bar{\theta}^{\lambda}_n|^{2p-2} +\lambda(t-n)c_6(p-1))  \\
&\quad +2^{2p-4}(2p(2p-1))^{p+1}(d\beta^{-1}\lambda (t-n) )^p .
\end{split}
\end{align}
One notes that $
-   \lambda(t-n)   a_F \tilde{\kappa}(p)    |\bar{\theta}^{\lambda}_n|^{2p}  +2^{2p-2}p(2p-1)\lambda (t-n)d \beta^{-1}|\bar{\theta}^{\lambda}_n|^{2p-2}<0$ implies
$ |\theta|>(2^{2p-2}p(2p-1)d \beta^{-1}/( a_F \tilde{\kappa}(p) ))^{1/2}$. Denote by $M_4(p) : = (2^{2p-2}p(2p-1)d \beta^{-1}/( a_F \tilde{\kappa}(p) ))^{1/2}$ and $\mathsf{S}_{n,M_4(p)} := \{\omega \in \Omega: |\bar{\theta}^{\lambda}_n(\omega)| >M_4(p)\}$. By using \eqref{eq:2pthmmtFthetaub5} and the above inequalities, one obtains 
\begin{align}\label{eq:2pthmmtFthetaub6}
\begin{split}
\E\left[\left.|\bar{\theta}^{\lambda}_t|^{2p} \1_{\mathsf{S}_{n,M_4(p)} } \right|\bar{\theta}^{\lambda}_n \right]
&\leq \left(1-  \lambda(t-n)   a_F \tilde{\kappa}(p)   \right)|\bar{\theta}^{\lambda}_n|^{2p} \1_{\mathsf{S}_{n,M_4(p)} }+\lambda(t-n)c_6(p)  \1_{\mathsf{S}_{n,M_4(p)} } \\
&\quad +\lambda (t-n)2^{2p-2}p(2p-1)d \beta^{-1} c_6(p-1)   \1_{\mathsf{S}_{n,M_4(p)} } \\
&\quad +\lambda (t-n)2^{2p-4}(2p(2p-1))^{p+1}(d\beta^{-1} )^p  \1_{\mathsf{S}_{n,M_4(p)} } .
\end{split}
\end{align}
Furthermore, it holds that
\begin{align}\label{eq:2pthmmtFthetaub7}
\begin{split}
\E\left[\left.|\bar{\theta}^{\lambda}_t|^{2p} \1_{\mathsf{S}_{n,M_4(p)}^{\mathsf{c}} }   \right|\bar{\theta}^{\lambda}_n \right]
&\leq \left(1-  \lambda(t-n)   a_F \tilde{\kappa}(p)   \right)|\bar{\theta}^{\lambda}_n|^{2p} \1_{\mathsf{S}_{n,M_4(p)}^{\mathsf{c}} } +\lambda(t-n)c_6(p) \1_{\mathsf{S}_{n,M_4(p)}^{\mathsf{c}} }\\
&\quad+ \lambda (t-n)2^{2p-2}p(2p-1)d \beta^{-1} ((M_4(p))^{2p-2}+c_6(p-1))  \1_{\mathsf{S}_{n,M_4(p)}^{\mathsf{c}} } \\
&\quad +\lambda (t-n)2^{2p-4}(2p(2p-1))^{p+1}(d\beta^{-1} )^p  \1_{\mathsf{S}_{n,M_4(p)}^{\mathsf{c}} }  .
\end{split}
\end{align}
Combining the two estimates in \eqref{eq:2pthmmtFthetaub6} and \eqref{eq:2pthmmtFthetaub7} yields
\[
\E\left[\left.|\bar{\theta}^{\lambda}_t|^{2p} \right|\bar{\theta}^{\lambda}_n \right]
\leq \left(1-  \lambda(t-n)   a_F \tilde{\kappa}(p)   \right)|\bar{\theta}^{\lambda}_n|^{2p} +\lambda (t-n)\tilde{c}_0(p),
\]
where
\begin{align}\label{eq:2pthmmtconsttildec0}
\begin{split}
\tilde{\kappa}(p)&:= (M_3(p))^{2r} /(4(1+ (M_3(p))^{2r})),\\
M_3(p)&:=d^p \binom{p}{\lcrc{p/2}} p  4^{p+2}\E[(1+|X_0|)^{2p\rho}]( 2^{2r+1} K_G+2b_F+10+8K_F+6K_F^2)^p/ \min\{1, a_F\},\\
\tilde{c}_0(p)&:=c_6(p) +2^{2p-2}p(2p-1)d \beta^{-1} ((M_4(p))^{2p-2}+c_6(p-1))    +2^{2p-4}(2p(2p-1))^{p+1}(d\beta^{-1})^p,\\
c_6(p)&: =d^p \binom{p}{\lcrc{p/2}} p  4^{p+2}\E[(1+|X_0|)^{2p\rho}]( 2^{2r+1} K_G+2b_F+10+8K_F+6K_F^2)^p\\
&\quad+ 2a_F  \tilde{\kappa}(p)(M_3(p))^{2p} +d^p \binom{p}{\lcrc{p/2}} p  4^{p+2}\E[(1+|X_0|)^{2p\rho}] \\
&\qquad \times( 2^{2r+1} K_G+2b_F+10+8K_F+6K_F^2)^p (M_3(p))^{2p+2rp-1} ,\\
M_4(p)& : = (2^{2p-2}p(2p-1)d \beta^{-1}/( a_F \tilde{\kappa}(p) ))^{1/2}.
\end{split}
\end{align}
Therefore, by using similar arguments as in \eqref{eq:2ndmmtinduction}, and by using $\tilde{\kappa}(p) \geq \tilde{\kappa}(2)$, 
one obtains 
\begin{align*}
\E\left[ |\bar{\theta}^{\lambda}_t|^{2p}  \right]
&\leq  (1-\lambda(t-n)  a_F \kappa^{\sharp}_2 )(1- \lambda  a_F \kappa^{\sharp}_2)^n \E\left[|\theta_0|^{2p}\right]  +c_0^{\sharp}(p)(1+1/(  a_F \kappa^{\sharp}_2)),
\end{align*}
where $\kappa^{\sharp}_2 := \min\{\bar{\kappa}(2), \tilde{\kappa}(2)\}$, $c_0^{\sharp}(p):= \max\{\bar{c}_0(p), \tilde{c}_0(p)\}$ with $\bar{\kappa}(2)$, $\bar{c}_0(p)$ and $\tilde{\kappa}(2)$, $\tilde{c}_0(p)$ given in \eqref{eq:2pthmmtconstbarc0} and \eqref{eq:2pthmmtconsttildec0}, respectively.
\end{proof}
\begin{lemma}\label{lem:ose4thpower}  Let Assumptions \ref{asm:AI}, \ref{asm:AG}, \ref{asm:AF}, and \ref{asm:AC} hold.  Then, for any $0<\lambda\leq \lambda_{\max}$ with $\lambda_{\max}$ given in \eqref{eq:stepsizemax}, $t \geq 0$, one obtains
\[
\E\left[|\bar{\theta}^{\lambda}_t  - \bar{\theta}^{\lambda}_{\lfrf{t}}|^4\right] \leq \lambda^2\left(e^{-\lambda  a_F \kappa^\sharp_2\lfrf{t} }\bar{C}_{0,1}\E\left[|\theta_0|^{4(2r+1)}\right]+\bar{C}_{1,1}\right),
\]
where
\begin{align}\label{eq:ose4thpowerconsts}
\begin{split}
\bar{C}_{0,1}
& :=  2^{8r+6}d^4(1+K_F+K_G)^4\E\left[(1+|X_0|)^{4\rho}\right],\\
\bar{C}_{1,1}
&:= 2^{8r+6}d^4(1+K_F+K_G)^4\E\left[(1+|X_0|)^{4\rho}\right](1+\mathring{c}_{4r+2}) +32d(d+2) \beta^{-2},
\end{split}
\end{align}
with 
$\kappa^\sharp_2$ and $\mathring{c}_{4r+2}$ given in Lemma \ref{lem:2ndpthmmt}.
\end{lemma}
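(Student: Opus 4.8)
The plan is to exploit that, over a single discretisation interval, the interpolation \eqref{eq:theopoulaproc} reduces to a frozen drift plus an independent Gaussian increment. Fix $n\in\N_0$ and $t\in(n,n+1)$, so that $\lfrf{t}=n$ and $\lcrc{s}=n+1$, $\bar{\theta}^{\lambda}_{\lfrf{s}}=\bar{\theta}^{\lambda}_n$ for all $s\in(n,t)$; the statement is trivial when $t$ is an integer, since then the increment vanishes. Integrating \eqref{eq:theopoulaproc} from $n$ to $t$ gives
\[
\bar{\theta}^{\lambda}_t-\bar{\theta}^{\lambda}_{n}=-\lambda(t-n)H_{\lambda}(\bar{\theta}^{\lambda}_{n},X_{n+1})+\sqrt{2\lambda\beta^{-1}}\,(B^{\lambda}_t-B^{\lambda}_{n}).
\]
I would then apply \eqref{eq:fundamentalineq} (with $l=2$, $z=4$, giving the factor $2^3=8$) to obtain
\[
|\bar{\theta}^{\lambda}_t-\bar{\theta}^{\lambda}_{n}|^4\leq 8\lambda^4(t-n)^4|H_{\lambda}(\bar{\theta}^{\lambda}_{n},X_{n+1})|^4+8(2\lambda\beta^{-1})^2|B^{\lambda}_t-B^{\lambda}_{n}|^4,
\]
and bound the two resulting terms separately.

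For the Gaussian term I would use that $B^{\lambda}_t-B^{\lambda}_{n}$ is centred Gaussian with covariance $(t-n)I_d$, so that $\E[|B^{\lambda}_t-B^{\lambda}_{n}|^4]=d(d+2)(t-n)^2$. Since $t-n\leq 1$ and $8(2\beta^{-1})^2=32\beta^{-2}$, this term contributes at most $32d(d+2)\beta^{-2}\lambda^2$, matching the second summand of $\bar{C}_{1,1}\lambda^2$ in \eqref{eq:ose4thpowerconsts}.

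The drift term is where the main work lies. The key point is that I would \emph{not} use the tamed bounds of Lemma \ref{lem:GlaFlabd}, since their factor $\lambda^{-1/2}$ would only produce $\E[|\theta_0|^4]$; instead I would trade two powers of $\lambda$ against a $\lambda$-uniform polynomial growth bound on $H_\lambda$, using $\lambda\leq 1$ and $t-n\leq 1$ so that $\lambda^4(t-n)^4=\lambda^2\cdot\lambda^2(t-n)^4\leq\lambda^2$. To derive the growth bound, I would split the boosting factor to show $|G_\lambda^{(i)}(\theta,x)|\leq|G^{(i)}(\theta,x)|+\sqrt{\lambda}\leq|G^{(i)}(\theta,x)|+1$, and combine this with Assumption \ref{asm:AG}, with $|F_\lambda^{(i)}|\leq|F^{(i)}|$ together with Assumption \ref{asm:AF}, and with $q\leq 2r$, to obtain
\[
|H_{\lambda}(\theta,x)|\leq C(1+|x|)^{\rho}(1+|\theta|^{2r+1}),\qquad 0<\lambda\leq 1,
\]
for an explicit constant $C$ of the form $2^{2r}d(1+K_F+K_G)$ (after crude bounding). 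Using that $X_{n+1}$ is independent of $\bar{\theta}^{\lambda}_{n}$, that $\mathcal{L}(X_{n+1})=\mathcal{L}(X_0)$, and $(1+|\theta|^{2r+1})^4\leq 8(1+|\theta|^{8r+4})$ by \eqref{eq:fundamentalineq}, the drift contribution is then at most
\[
8\lambda^2\cdot 8C^4\E[(1+|X_0|)^{4\rho}]\bigl(1+\E[|\bar{\theta}^{\lambda}_{n}|^{8r+4}]\bigr)=\lambda^2\,\bar{C}_{0,1}\bigl(1+\E[|\bar{\theta}^{\lambda}_{n}|^{8r+4}]\bigr),
\]
since $64\,C^4\E[(1+|X_0|)^{4\rho}]=2^{8r+6}d^4(1+K_F+K_G)^4\E[(1+|X_0|)^{4\rho}]=\bar{C}_{0,1}$.

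Finally, I would invoke Lemma \ref{lem:2ndpthmmt}-\ref{lem:2ndpthmmtii} with $p=4r+2$, admissible since $r\geq 1$ and $\lambda\leq\lambda_{\max}=\lambda_{4r+2,\max}$ from \eqref{eq:stepsizemax}, to bound, at the grid point $t=n$,
\[
\E[|\bar{\theta}^{\lambda}_{n}|^{8r+4}]\leq(1-\lambda a_F\kappa^{\sharp}_2)^{n}\E[|\theta_0|^{8r+4}]+\mathring{c}_{4r+2}\leq e^{-\lambda a_F\kappa^{\sharp}_2 n}\E[|\theta_0|^{8r+4}]+\mathring{c}_{4r+2},
\]
using $1-x\leq e^{-x}$. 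Adding the Gaussian and drift contributions and recalling $\bar{C}_{1,1}=\bar{C}_{0,1}(1+\mathring{c}_{4r+2})+32d(d+2)\beta^{-2}$ yields exactly the claimed estimate. The main obstacle is the bookkeeping of constants inside the $\lambda$-uniform growth bound for $H_\lambda$; the conceptual subtlety, and the step most easily overlooked, is recognising that deliberately sacrificing the optimal $\lambda$-power via $\lambda^4\leq\lambda^2$ is precisely what lets one discard the taming factor and thereby invoke the $(8r+4)$-th moment estimate rather than the (insufficient) fourth moment.
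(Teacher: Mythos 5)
Your proposal is correct and follows essentially the same route as the paper: the paper's proof defers the one-step decomposition (frozen drift plus Gaussian increment, fourth-power splitting, Gaussian moment, and the $(8r+4)$-th moment estimate from Lemma \ref{lem:2ndpthmmt}) to the cited reference and only records the key new ingredient, namely the $\lambda$-uniform growth bound $|H_{\lambda}(\theta,x)|\leq d(1+K_G+K_F)(1+|x|)^{\rho}(1+|\theta|)^{2r+1}$ in \eqref{eq:Hlaubconvergence}, which is exactly the bound you derive by discarding the taming factors. Your constant bookkeeping reproduces $\bar{C}_{0,1}$ and $\bar{C}_{1,1}$ as stated.
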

\begin{proof}The proof follows exactly the same ideas as in the proof of \cite[Lemma A.2]{lim2021nonasymptotic}. To obtain the explicit constants, recall the expression of $H_\lambda$ given in \eqref{eq:expressiontH}-\eqref{eq:expressiontGF}. By Assumptions \ref{asm:AG} and \ref{asm:AF}, one obtains
\begin{align}\label{eq:Hlaubconvergence}
|H_{\lambda}(\theta, x)|
&\leq \sum_{i = 1}^d|G^{(i)}(\theta, x)|+d+|F(\theta,x)|
\leq d(1+K_G+K_F)(1+|x|)^{\rho}(1+|\theta|)^{2r+1}
\end{align}
for any $\theta \in \R^d$, $x \in \R^m$. Then, one upper bounds $H_{\lambda}$ using \eqref{eq:Hlaubconvergence}, and replaces \cite[Remark 2.2 and Lemma 4.2]{lim2021nonasymptotic} with Remark \ref{rem:growthHlliph} and Lemma \ref{lem:2ndpthmmt}, respectively.
\end{proof}
\begin{proof}[\textbf{Proof of Lemma \ref{lem:w1converp1} }]\label{proofw1converp1} The proof follows the exact same ideas as in the proof of \cite[Lemma 4.5]{lim2021nonasymptotic}. To obtain the explicit constants, let $0 <\lambda \leq \lambda_{\max}$ with $\lambda_{\max}$ given in \eqref{eq:stepsizemax}, $n \in \N_0$, $t \in (nT, (n+1)T]$. By using the arguments in \cite[Eq. (165)]{lim2021nonasymptotic} and by Remark \ref{rem:growthHlliph}, one observes, for any $s \in (nT, (n+1)T]$, that
\begin{equation}\label{eq:convergencew2ub1}
\E\left[  |h(\bar{\theta}^{\lambda}_s)- h(\bar{\theta}^{\lambda}_{\lfrf{s}})|^2 \right]
\leq  3^{4r-(1/2)}L_h^2  \left(\E\left[ 1+ | \bar{\theta}^{\lambda}_s|^{8r}  +|\bar{\theta}^{\lambda}_{\lfrf{s}}|^{8r} \right]\right)^{1/2}\left(\E\left[  | \bar{\theta}^{\lambda}_s -\bar{\theta}^{\lambda}_{\lfrf{s}}|^4 \right]\right)^{1/2}.
\end{equation}
Moreover, by \eqref{eq:expH}, \eqref{eq:expressiontH}, \eqref{eq:expressiontGF}, and by using $(a+b)^2 \leq 2a^2+2b^2$ for $a, b \geq 0$, one obtains for any $s \in (nT, (n+1)T]$ that
\begin{align}\label{eq:convergencew2ub2}
&\E\left[ | H(\bar{\theta}^{\lambda}_{\lfrf{s}},X_{\lcrc{s}})- H_{\lambda}(\bar{\theta}^{\lambda}_{\lfrf{s}},X_{\lcrc{s}})|^2\right]\nonumber\\
& \leq 2\sum_{i = 1}^d \E\left[ | G^{(i)}(\bar{\theta}^{\lambda}_{\lfrf{s}},X_{\lcrc{s}})- G^{(i)}_{\lambda}(\bar{\theta}^{\lambda}_{\lfrf{s}},X_{\lcrc{s}})|^2\right] +2\sum_{i = 1}^d \E\left[ | F^{(i)}(\bar{\theta}^{\lambda}_{\lfrf{s}},X_{\lcrc{s}})- F^{(i)}_{\lambda}(\bar{\theta}^{\lambda}_{\lfrf{s}},X_{\lcrc{s}})|^2\right] \nonumber\\
&\leq 4d\lambda \left(K_G^4+K_F^2\right)\E\left[(1+|X_{\lcrc{s}}|)^{4\rho}(1+|\bar{\theta}^{\lambda}_{\lfrf{s}}|)^{8r+4} \right] +4d\lambda\nonumber\\
&\leq 2^{8r+5}d\lambda \left(K_G^4+K_F^2\right)\E\left[(1+|X_{\lcrc{s}}|)^{4\rho}(1+|\bar{\theta}^{\lambda}_{\lfrf{s}}|^{8r+4} )\right] +4d\lambda,
\end{align}
where the second last inequality holds due to Assumptions \ref{asm:AG} and \ref{asm:AF}, 
and where the last inequality holds due to \eqref{eq:fundamentalineq} (with $l \leftarrow 2$, $z \leftarrow 8r+4$). 
Denote by $\mathcal{H}_t := \mathcal{F}^{\lambda}_{\infty} \vee \mathcal{G}_{\lfrf{t}}\vee\sigma(\theta_0), t \geq 0$. Then, following the arguments in \cite[Lemma 4.5]{lim2021nonasymptotic} up to \cite[Eq. (167)]{lim2021nonasymptotic}, but by using \eqref{eq:convergencew2ub1} and \eqref{eq:convergencew2ub2} instead of \cite[Eq. (165)]{lim2021nonasymptotic} and \cite[Eq. (166)]{lim2021nonasymptotic}, one obtains that 
\begin{align}\label{eq:convergencew2ub3}
 \begin{split}
& \E\left[|\bar{\zeta}^{\lambda, n}_t- \bar{\theta}^{\lambda}_t  |^2\right]
 \leq 4\lambda L_R \int_{nT}^t\E\left[|\bar{\zeta}^{\lambda, n}_s -\bar{\theta}^{\lambda}_s|^2 \right]\, \rmd s +4d\lambda L_R^{-1} \\
&\quad + 3^{4r-(1/2)}\lambda L_h^2L_R^{-1} \int_{nT}^t\left(\E\left[ 1+ | \bar{\theta}^{\lambda}_s|^{8r}+|\bar{\theta}^{\lambda}_{\lfrf{s}}|^{8r} \right]\right)^{1/2}\left(\E\left[  | \bar{\theta}^{\lambda}_s -\bar{\theta}^{\lambda}_{\lfrf{s}}|^4 \right]\right)^{1/2}\, \rmd s \\
&\quad + 2^{8r+5}d\lambda^2 \left(K_G^4+K_F^2\right)  L_R^{-1} \int_{nT}^t E\left[(1+|X_{\lcrc{s}}|)^{4\rho}(1+|\bar{\theta}^{\lambda}_{\lfrf{s}}|^{8r+4} )\right] \, \rmd s\\
&\quad -2\lambda \int_{nT}^t\E\left[ \E\left[ \left.\left\langle \bar{\zeta}^{\lambda, n}_s -\bar{\theta}^{\lambda}_{\lfrf{s}} ,  h(\bar{\theta}^{\lambda}_{\lfrf{s}})- H(\bar{\theta}^{\lambda}_{\lfrf{s}},X_{\lcrc{s}})\right\rangle\right| \mathcal{H}_s \right]\right]\, \rmd s\\
&\quad -2\lambda^2 \int_{nT}^t\E\left[  \left\langle  \int_{\lfrf{s}}^s H_{\lambda}(\bar{\theta}^{\lambda}_{\lfrf{r}},X_{\lcrc{r}}) \, \rmd r ,  h(\bar{\theta}^{\lambda}_{\lfrf{s}})- H(\bar{\theta}^{\lambda}_{\lfrf{s}},X_{\lcrc{s}})\right\rangle \right]\, \rmd s\\
&\quad +2\lambda\sqrt{2\lambda\beta^{-1}} \int_{nT}^t\E\left[  \left\langle  \int_{\lfrf{s}}^s \rmd B^{\lambda}_r ,  h(\bar{\theta}^{\lambda}_{\lfrf{s}})- H(\bar{\theta}^{\lambda}_{\lfrf{s}},X_{\lcrc{s}})\right\rangle \right]\, \rmd s.
\end{split}
\end{align}
By Remark \ref{rem:growthHlliph} and \eqref{eq:Hlaubconvergence}, 
one obtains the following estimate for the sixth term on the RHS of \eqref{eq:convergencew2ub3}:
\begin{align}
& -2\lambda^2 \int_{nT}^t\E\left[ \left| \left\langle  \int_{\lfrf{s}}^s H_{\lambda}(\bar{\theta}^{\lambda}_{\lfrf{r}},X_{\lcrc{r}}) \, \rmd r ,  h(\bar{\theta}^{\lambda}_{\lfrf{s}})- H(\bar{\theta}^{\lambda}_{\lfrf{s}},X_{\lcrc{s}})\right\rangle \right|\right]\, \rmd s\nonumber\\
\begin{split}\label{eq:convergencew2ub4}
&\leq  2^{8r+3}\left(d^2(1+K_G+K_F)^2+4L_h^2+2K_H^2\right)\lambda^2 \int_{nT}^t\E\left[(1+|X_{\lcrc{s}}|)^{2\rho}(1+|\bar{\theta}^{\lambda}_{\lfrf{s}}|^{8r+4}  )\right]\,\rmd s\\
&\quad +4\lambda |h(0)|^2.
\end{split}
\end{align}
By Lemma \ref{lem:ose4thpower}, \eqref{eq:convergencew2ub4}, the fact that the fifth and seventh term of the RHS of \eqref{eq:convergencew2ub3} are zero, and that $X_{\lcrc{s}}$ and $\bar{\theta}^{\lambda}_{\lfrf{s}}$ are independent for any $s \geq 0$, one obtains
\begin{align*}
& \E\left[|\bar{\zeta}^{\lambda, n}_t- \bar{\theta}^{\lambda}_t  |^2\right] \\
& \leq 4\lambda L_R \int_{nT}^t\E\left[|\bar{\zeta}^{\lambda, n}_s -\bar{\theta}^{\lambda}_s|^2 \right]\, \rmd s +4d\lambda L_R^{-1}+4\lambda |h(0)|^2 \\
&\quad + 3^{4r-(1/2)}\lambda^2 L_h^2L_R^{-1} \int_{nT}^t\left(\E\left[ 1+ | \bar{\theta}^{\lambda}_s|^{8r}+|\bar{\theta}^{\lambda}_{\lfrf{s}}|^{8r} \right]\right)^{1/2}  \left(e^{-\lambda  a_F \kappa^\sharp_2\lfrf{t} }\bar{C}_{0,1}\E\left[|\theta_0|^{4(2r+1)}\right]+\bar{C}_{1,1} \right)^{1/2}\, \rmd s \\
&\quad + 2^{8r+5}d\lambda^2 \left(K_G^4+K_F^2\right)  L_R^{-1} \int_{nT}^t E\left[(1+|X_0|)^{4\rho}\right] E\left[(1+|\bar{\theta}^{\lambda}_{\lfrf{s}}|^{8r+4} )\right] \, \rmd s \\
&\quad + 2^{8r+3}\left(d^2(1+K_G+K_F)^2+4L_h^2+2K_H^2\right)\lambda^2\int_{nT}^t\E\left[ (1+ |X_0|)^{2\rho}\right] \E\left[ (1+|\bar{\theta}^{\lambda}_{\lfrf{s}}|^{8r+4} )\right] \, \rmd s.
\end{align*}
This implies, by using Lemma \ref{lem:2ndpthmmt} and by using $1- \nu \leq e^{-\nu}$ for any $\nu \in \R$, that
\begin{align*}
 \E\left[|\bar{\zeta}^{\lambda, n}_t- \bar{\theta}^{\lambda}_t  |^2\right]
& \leq 4\lambda L_R \int_{nT}^t\E\left[|\bar{\zeta}^{\lambda, n}_s -\bar{\theta}^{\lambda}_s|^2 \right]\, \rmd s +4d\lambda L_R^{-1}+4\lambda |h(0)|^2 \\
&\quad + 3^{4r-(1/2)}\lambda^2 L_h^2L_R^{-1} \int_{nT}^t\left( 1+ 2e^{-\lambda  a_F \kappa^\sharp_2\lfrf{s} }\E[|\theta_0|^{8r}]+2\mathring{c}_{4r} \right)^{1/2}  \\
&\qquad \times\left(e^{-\lambda  a_F \kappa^\sharp_2\lfrf{s} }\bar{C}_{0,1}\E\left[|\theta_0|^{4(2r+1)}\right]+\bar{C}_{1,1} \right)^{1/2}\, \rmd s \\
&\quad +\lambda^2\left( 2^{8r+5}d \left(K_G^4+K_F^2\right)  L_R^{-1}  + 2^{8r+3}\left(d^2(1+K_G+K_F)^2+4L_h^2+2K_H^2\right)\right) \\
&\qquad \times \E\left[(1+|X_0|)^{4\rho}\right]\int_{nT}^t\left( 1+ e^{-\lambda  a_F \kappa^\sharp_2\lfrf{s} }\E[|\theta_0|^{8r+4}]+ \mathring{c}_{4r+2} \right) \, \rmd s.
\end{align*}
By using $\lfrf{s}  \geq nT$ and $1/2 \leq \lambda T \leq 1$, the above inequality becomes
\begin{align*}
 \E\left[|\bar{\zeta}^{\lambda, n}_t- \bar{\theta}^{\lambda}_t  |^2\right]
&\leq 4\lambda L_R \int_{nT}^t\E\left[|\bar{\zeta}^{\lambda, n}_s -\bar{\theta}^{\lambda}_s|^2 \right]\, \rmd s +e^{-4L_R}\lambda \left(e^{- n  a_F \kappa^\sharp_2/2 } \bar{C}_0 \E\left[V_{4(2r+1)}(\theta_0)\right] +\bar{C}_1 \right),
\end{align*}
where
\begin{align}
\begin{split}\label{eq:w1converp1consts}
\kappa^{\sharp}_2& := \min\{\bar{\kappa}(2), \tilde{\kappa}(2)\},\\
\bar{C}_0&:=  e^{4L_R}\left( 2^{8r+5}d \left(K_G^4+K_F^2\right)  L_R^{-1}  +2^{8r+3}\left(d^2(1+K_G+K_F)^2+4L_h^2+2K_H^2\right)\right)\E\left[(1+|X_0|)^{4\rho}\right]\\
&\quad +e^{4L_R}3^{4r-(1/2)}  L_h^2L_R^{-1}\bar{C}_{0,1},\\
\bar{C}_1&:=e^{4L_R}\left( 2^{8r+5}d \left(K_G^4+K_F^2\right)  L_R^{-1}  +2^{8r+3}\left(d^2(1+K_G+K_F)^2+4L_h^2+2K_H^2\right)\right) \E\left[(1+|X_0|)^{4\rho}\right] \\ &\quad \times( \mathring{c}_{4r+2} +1)  +e^{4L_R}3^{4r-(1/2)}  L_h^2L_R^{-1}   \left( \bar{C}_{1,1} +2\mathring{c}_{4r} +1\right)+e^{4L_R}(4d  L_R^{-1}+4  |h(0)|^2),
\end{split}
\end{align}
with $\bar{C}_{0,1}$, $\bar{C}_{1,1}$ given in \eqref{eq:ose4thpowerconsts}, $\bar{\kappa}(2)$, $\tilde{\kappa}(2)$, $\mathring{c}_{4r}$, $\mathring{c}_{4r+2}$ given in Lemma \ref{lem:2ndpthmmt}. Finally, the desired result can be obtained by applying Gr\"{o}nwall's lemma, which implies that
\[
\E\left[|\bar{\zeta}^{\lambda, n}_t- \bar{\theta}^{\lambda}_t  |^2\right] \leq \lambda \left(e^{- n  a_F \kappa^\sharp_2/2 } \bar{C}_0 \E\left[V_{4(2r+1)}(\theta_0)\right] +\bar{C}_1 \right).
\]
\end{proof}
\bibliographystyle{plainnat}

\bibliography{references}
\end{document}